\theoremstyle{plain}             
\newtheorem{theorem}{Theorem}[section]
\newtheorem{lem}[theorem]{Lemma}
\newtheorem{cor}[theorem]{Corollary}
\newtheorem{rem}[theorem]{Remark}
\def\theequation{\arabic{section}.\arabic{equation}}
\makeatletter\@addtoreset{equation}{section}\makeatother
\def\thetheorem{\arabic{section}.\arabic{theorem}}
\renewcommand{\text}[1]{\;\mbox{\rm #1}\;}
\newcommand{\nc}{\newcommand}
\renewcommand{\intertext}[1]{\noalign{{#1}}}
\nc{\intli}{\int\limits}
\nc{\sumli}{\sum\limits}
\nc{\cd}{\cdot}
\nc{\sbs}{\subset}
\nc{\p}{^\prime}
\nc{\pp}{^{\prime\prime}}
\nc{\wt}{\widetilde}
\nc{\wh}{\widehat}
\nc{\ov}{\overline}
\nc{\pa}{\partial}
\nc{\strich}{\,|\,}
\nc{\komma}{\,,\, }
\nc{\secol}{\,;\, }
\nc{\dopu}{\,:\,}
\nc{\f}{\frac}
\nc{\tf}{\textstyle\frac}
\nc{\ha}{\frac{1}{2}}
\nc{\tha}{\textstyle{\frac{1}{2}}}
\nc{\sm}{\setminus}
\nc{\na}{\nabla}
\nc{\pf}{{\; \rm p.f.}\,}
\nc{\pv}{{\; \rm p.v.}\,}
\nc{\supp}{{\; \rm supp}\,}
\nc{\Gr}{\mathop{\rm{\mathbf{G}}rad}\,}
\nc{\tb}{\textbf}
\nc{\x}{\times}
\nc{\tp}{^\top}
\nc{\no}{\notag}
\nc{\os}{\overset}
\nc{\al}{\alpha}
\nc{\bet}{\beta}
\nc{\vep}{\varepsilon}
\nc{\gam}{\gamma}
\nc{\de}{\delta}
\nc{\ka}{\kappa}
\nc{\lam}{\lambda}
\nc{\om}{\omega}
\nc{\vfi}{\varphi}
\nc{\vr}{\varrho}
\nc{\si}{\sigma}
\nc{\Gam}{\Gamma}
\nc{\Om}{\Omega}
\nc{\Si}{\Sigma}
\nc{\bal}{\mathbf{\alpha}}
\nc{\ba}{\mathbf{a}}
\nc{\bc}{\mathbf{c}}
\nc{\be}{\mathbf{e}}
\nc{\bof}{\mathbf{f}}
\nc{\bg}{\mathbf{g}}
\nc{\bh}{\mathbf{h}}
\nc{\bn}{\mathbf{n}}
\nc{\bu}{\mathbf{u}}
\nc{\bv}{\mathbf{v}}
\nc{\bw}{\mathbf{w}}
\nc{\bx}{\mathbf{x}}
\nc{\by}{\mathbf{y}}
\nc{\bB}{\mathbf{B}}
\nc{\bC}{\mathbf{C}}
\nc{\bD}{\mathbf{D}}
\nc{\bF}{\mathbf{F}}
\nc{\bH}{\mathbf{H}}
\nc{\bJ}{\mathbf{J}}
\nc{\bL}{\mathbf{L}}
\nc{\bP}{\mathbf{P}}
\nc{\bW}{\mathbf{W}}
 \nc{\bnull}{\boldsymbol{0}}
\nc{\blam}{\boldsymbol{\lambda}}
\nc{\bmu}{\boldsymbol{\mu}}
\nc{\bnu}{\boldsymbol{\nu}}
\nc{\bvep}{\boldsymbol{\vep}}
\nc{\bvfi}{\boldsymbol{\vfi}}
 \nc{\bxi}{\boldsymbol{\xi}}
\nc{\bsi}{\boldsymbol{\sigma}}
\nc{\bSi}{\boldsymbol{\Sigma}}
\nc{\bPsi}{\boldsymbol{\Psi}}
\nc{\bpsi}{\boldsymbol{\psi}}
 \nc{\bTheta}{\boldsymbol{\Theta}}
\nc{\bom}{\boldsymbol\omega}
 \nc{\bcM}{\boldsymbol{\mathcal M}}
\def\cC{{\mathcal C}}
\def\cO{{\mathcal O}}
\def\cE{{\mathcal E}}
\def\cH{{\mathcal H}}
\nc{\bcH}{\mathbf{\cH}}
\def\cK{{\mathcal K}}
\def\cO{{\mathcal O}}
\def\cU{{\mathcal U}}
\def\cX{{\mathcal X}}
\def\fA{{\mathfrak A}}
\def\fM{{\mathfrak M}}
\def\d{\operatorname{d}\!}
\nc{\kreuz}{\,\Big/\kern-2.75ex\d\Theta^j\kern-4ex\Big\backslash\;\,}
\nc{\kreuzx}{\,\Big/\kern-2.5ex\d x^j\kern-3.75ex\Big\backslash\;\,}
\nc{\bbG}{\mathbb{G}}
\nc{\N}{\mathbb{N}}
\nc{\R}{\mathbb{R}}
\nc{\bbS}{\mathbb{S}}
\nc{\V}{\mathbb{V}}
\nc{\dip}{\displaystyle}
\nc{\para}{|\!|\!|}
\nc{\bigpara}{\big|\!\big|\!\big|}
\nc{\J}{\overset{\circ}{\bJ}}
\begin{document}
\keywords{Minimal energy problem; strongly singular Riesz kernel;
pseudodifferential operators.}
\subjclass[msc2000]{31B10, 31C15, 49J35, 45L10}

\title[Minimal energy problems for strongly singular Riesz kernels]
{Minimal energy problems for strongly singular Riesz kernels}

\author{Helmut Harbrecht}
\address{Helmut Harbrecht, Departement Mathematik und Informatik, Universit\"at Basel, Spiegelgasse 1,
  4051 Basel, Switzerland}
\author{Wolfgang L.~Wendland}
\address{Wolfgang L.~Wendland, Institut f\"ur Angewandte Analysis und
  Numerische Simulation, Universit\"at Stuttgart, Pfaffenwaldring~57,
  70569 Stuttgart, Germany}
\author{Natalia Zorii}
\address{Natalia Zorii, Institute of Mathematics, National Academy of
   Sciences of Ukraine, Tereshchenkivska~3, 01601, Kyiv-4, Ukraine}

\begin{abstract}
We study minimal energy problems for strongly singular
Riesz kernels $|\bx-\by|^{\alpha-n}$, where $n \ge 2$ and
$\alpha\in (-1,1)$, considered for compact $(n-1)$-dimensional
$C^\8$-manifolds $\Gam$ immersed into $\R^n$. Based on the
spatial energy of harmonic double layer potentials, we are
motivated to formulate the natural regularization of such 
minimization problems by switching to Hadamard's partie finie
integral operator which defines a strongly elliptic pseudodifferential
operator of order $\beta = 1-\alpha$ on $\Gamma$. The measures
with finite energy are shown to be 
elements from the Sobolev space
$H^{\beta/2}(\Gamma)$, $0<\beta<2$, and the corresponding
minimal energy problem admits a unique solution. We relate
our continuous approach also to the 
discrete one, which has been worked out earlier by D.P.~Hardin and E.B.~Saff.
\end{abstract}

\maketitle

\section{Introduction}\label{s:intro}
The classical Gauss problem of minimizing the Coulomb energy
to solve the problem of Thomson  and its generalization to Riesz
potentials together with the discretization is the basic problem of
many applications (in \cite{B} are listed coding theory, cubature
formulas, tight frames and packing problems). In the works \cite{B2},
\cite{B}, \cite{dr}, \cite{h-s}, and \cite{h-s2}, the discretization is
obtained by approximating the minimizing charges by a distribution
of finitely many Dirac measures on the given manifold.

If the number of Dirac points tends to infinity, then the minimizing
densities approach distributions in the form of Sobolev space elements.
Therefore, in \cite{HWZ1}, \cite{HWZ2}, \cite{OWZ}, the minimizing
measures are considered as distributions in Hilbert spaces of finite
Riesz energy. This continuous setting is simpler and more efficient
from the numerical point compared to the discrete approach in
\cite{B2}, \cite{B}, \cite{dr}, \cite{h-s}.

For potentials with Riesz kernel $|\bx-\by|^{\al-n}$, where $1<\al<n$, and
Borel measures supported on a given $(n-1)$-dimensional manifold $\Gam$
immersed into $\R^n$, a surface potential is generated, which on $\Gam$
defines a boundary integral operator with weakly singular kernel.
This boundary integral operator is a pseudodifferential
operator of \emph{negative\/} order $\beta=1-\al$ if $\Gam\in C^\8$.
The energy space of this pseudodifferential operator on $\Gam$ is
thus the Sobolev space $H^{\beta/2}(\Gam)$ of distributions and the
minimizing measure of finite energy is an element of this Sobolev space.
Hence, the determination of the minimizer is reduced
to an optimization problem with a quadratic functional which is defined in
terms of the single layer Riesz potential on $\Gam$. The strong ellipticity
of the corresponding pseudodifferential operator in $\mathbb{R}^n$ and its
trace on $\Gam$ then provides the coerciveness of the associated quadratic
functional. For $\al=2$, which corresponds to the Newtonian kernel, the
Riesz energy of the single layer potential is just its Dirichlet integral over
$\R^n\sm\Gam$.

In this paper, however, we consider the Riesz kernels with $\al\in
(-1,1)$. For $\al=0$, in classical potential theory, the energy of the
harmonic double layer potential in $\R^n\sm\Gam$ now equals the
Riesz energy if we define the latter as to be Hadamard's partie finie
integral of the hypersingular potential --- which is the natural distributional
regularization (see Section \ref{s:double} where $\Gam$ is a $(n-1)$-dimensional
planar bounded domain in $\R^n$).

Let $\Gam=\bigcup_{i\in I}\Gamma_i$ where $\Gamma_i$, $i\in I$, are
finitely many compact, connected $(n-1)$-dimensional $C^\8$-manifolds
immersed into $\R^n$. In Section \ref{s:manifold}, we then consider the Riesz
potential as a pseudodifferential operator just on $\Gamma$ since we cannot
use its extension to $\mathbb R^n$ (for $\alpha\ne0$, the transmission
conditions \cite[Theorem~8.3.11]{H-W} are not satisfied). We call the
bilinear form with the strongly singular partie finie integral of the Riesz
kernel the {\em energy of the Riesz potential\/}. The partie finie integral
operator with the hypersingular Riesz kernel defines now a strongly
elliptic pseudodifferential operator $V_\beta$ of \emph{positive\/}
order $\bet=1-\al$ on $\Gam$.

In contrast to the analysis of weakly singular Riesz kernels
provided earlier by the authors in~\cite{HWZ1}, \cite{HWZ2}, in the
case under consideration, the trace theorem in $H^{-\beta/2}(\Gamma)
= V_\beta H^{\beta/2}(\Gamma)$ is not valid anymore, because of the
negativity of the order $-\beta/2$, cf.~\cite{Adams}. Nevertheless, we have
succeeded in overcoming this difficulty, and we have shown that all the
Borel measures on $\Gamma$ with finite Riesz energy whose restriction
on any $\Gamma_i$ takes sign either $+1$ or $-1$ form a certain cone
in the Sobolev space $H^{\bet/2}(\Gam)$, $0<\bet<2$. \emph{This is our
main result in Section\/ {\rm\ref{s:manifold}}, Theorems\/ {\rm\ref{th:3.3}}
and\/ {\rm\ref{th:3.4}}}. In this framework, the corresponding Gauss variational
problem admits a unique solution which belongs to $H^{\bet/2}(\Gam)$,
which is a compact subspace of $L_2(\Gam)$. These results have again
a potential theoretic meaning in the particular situation $\alpha = 0$ in
relation to the harmonic double layer potential as explained in
Section~\ref{s:null}.

In the fundamental work \cite{h-s2} by D.P.\ Hardin and E.B.\ Saff,
discrete minimal energy problems have been investigated. There,
the discrete Riesz energies are obtained by distributing a finite
number ($N$) of evenly weighted Dirac measures on a compact
$(n-1)$-dimensional manifold $A$ where the set ${\bf x}={\bf y}$
is excluded. Then, the discrete minimal Riesz energy determines
an optimal geometric arrangement of the $N$ distinct Dirac points
on $A$. In \cite{h-s2}, three cases are distinguished: (i) the Riesz
kernel is weakly singular, (ii) the case $\alpha =1$ (see \cite{KS}),
and (iii) the hypersingular case $\alpha <1$. For all these three cases,
the behavior of the discrete minimal energies for $N$ tending to
infinity is explicitly determined (see Section~\ref{s:mini} below for details). In the works \cite{B2}, \cite{B},
and \cite{BHS}, these results are generalized to more general
Riesz kernels with weights.

During a miniworkshop in August 2012 in Stuttgart with E.B.\ Saff,
D.P.\ Hardin, and P.D.\ Dragnev, we have learned from them that in the hypersingular
case the discretized minimal energies tend to infinity if the number of
Dirac basic points approaches infinity and at the same time
those minimizing charges tend to a charge with a constant density.
This discussion inspired us to pick up this topic gratefully in our
paper and to analyze also this approach by cutting out the set
$|\bx-\by|\le\de$ of $\Gam\x\Gam$ where $\delta >0$. We first
figure out the idea in Section~\ref{s:mini} by studying a perturbed
Riesz energy problem. Then, in Section  \ref{s:comp}, we perform the
computations in detail for the punched Riesz energy problem and give
an asymptotic expansion of the solution in the corresponding family of
finite energy spaces for $\de\to 0$. In particular cases (see Corollary
\ref{c:4.6} for details), the minimizers tend to a constant distribution
on $\Gam$ while the corresponding minimal energies tend to infinity.

\section{Motivation. The energy of the Laplacian's  double layer
  potential}\label{s:double}
\setcounter{equation}{0}
\setcounter{theorem}{0}

We shall motivate our approach by an example from potential
theory where $\alpha=0$, i.e.\ $\beta = 1$. To this end, let
$\Gam\sbs\R^{n-1}$ be a planar bounded domain in $\R^n$
and $\bx=(\bx\p,x_n)\in\R^n$ with $\bx\in\Gam$ when $x_n=0$,
see Figure~\ref{fig:plane} for an illustration.

\begin{figure}
\begin{center}
\includegraphics[height=4cm,width=8cm]{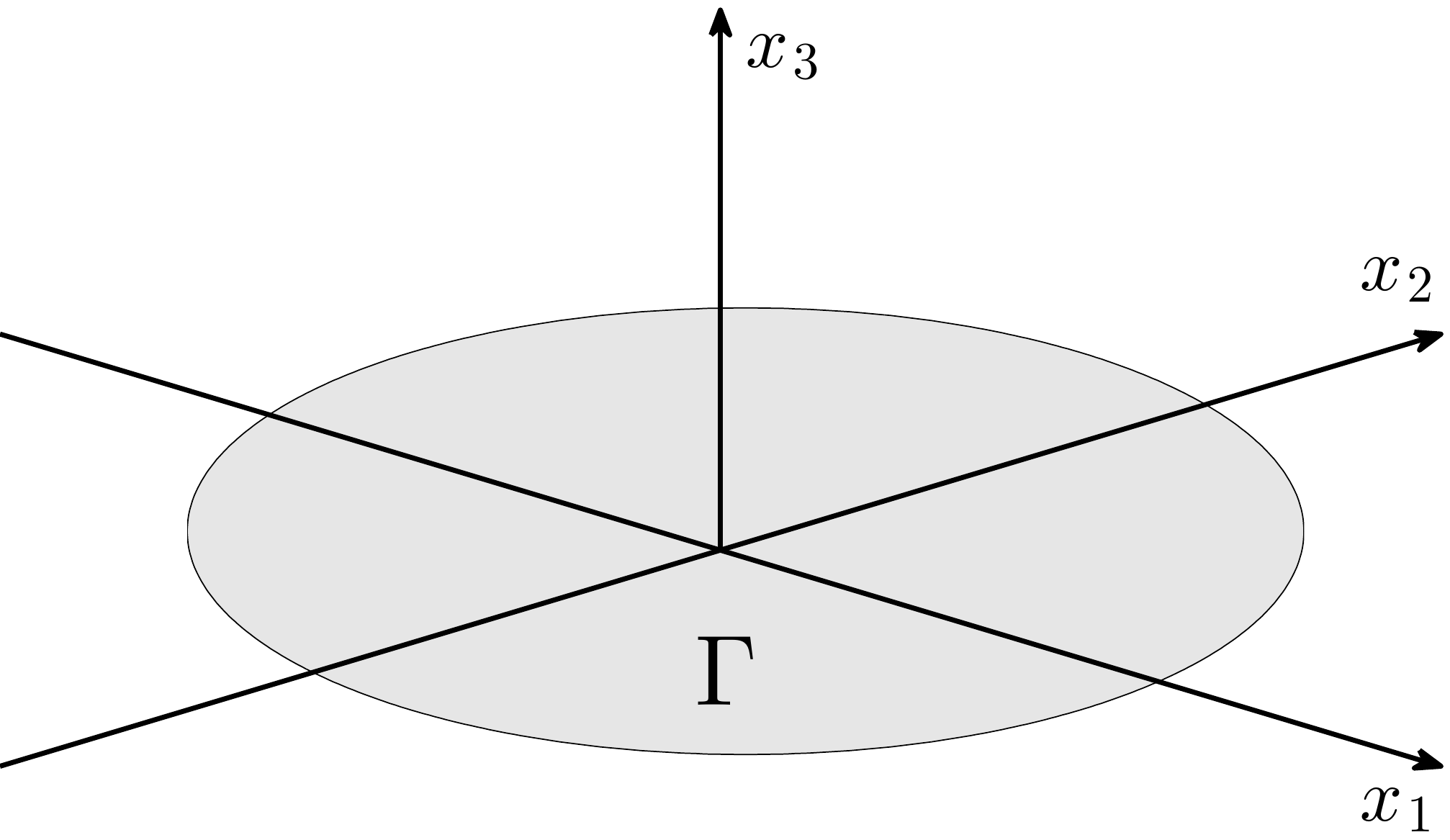}
\caption{\label{fig:plane}Illustration of the geometrical setting.}
\end{center}
\end{figure}

The double layer potential of the Laplacian with given dipole charge
density $\bvfi(\by\p)$ is given by
\begin{align*}
U_{\bvfi}(\bx):= -(\bW\bvfi)(\bx)
&:=
-\intli_{y_n=0}\bvfi(\by\p)\f{\pa E}{\pa y_n}\big((\by\p,0),\bx\big)\d\by\p\\
&\,=
- c_n\intli_{\by\p\in\Gam}\f{\big(\bx-(\by\p,0)\big)\cd
  \be_n}{|\bx-(\by\p,0)|^n}
\bvfi(\by\p)\d\by\p
\end{align*}
for $\bx\in\R^n$ with $x_n\neq 0$ and
$c_n=\f{1}{\om_n}=\big(2(n-1)\pi\big)^{-1}$.
The fundamental solution for the Laplacian is given by
\[
E(\bx,\by)=c_n|\bx-\by|^{2-n}.
\]
The vector
 $\be_n=(0,\dots,1)\tp$ is the
$n$-th basis vector of $\R^n$ and the unit normal vector on $\Gam$.
If $\bvfi$ is continuous at $\bx\p$,
then there holds the jump relation
\begin{align*}
\R_\pm^n\ni \bx
&\to
(\bx\p,0)\dopu\\
U_{\bvfi}(\bx)&\to
 \mp \ha\vfi(\bx\p)-
\intli_{\by\p\in\Gam\sm\{\bx\p\}}\f{\big((\bx-(\by\p,0)\big)\cd
  \be_n}{|\bx-(\by\p,0)|^n}
  \bvfi(\by\p)\d\by\p
=
\mp\ha \bvfi(\bx\p)-0
\end{align*}
since for $\bx\in\Gam$ and $\by\in\Gam\sm\{\bx\}$ the scalar product
$(\bx-\by)\cd \be_n=0$ and, hence, the integral vanishes.
Consequently, the harmonic potential $U_{\bvfi}(\bx)$ solves the
transmission problem in $\R^n\sm\Gam$
\[
[U]_\Gam :=U_{\bvfi}(\bx\p,-0)-U_{\bvfi}(\bx\p,+0)=\bvfi(\bx\p)
\]
where $\bvfi$ is a given element of $\widetilde{H}^{1/2}(\Gamma)$,
the closure of $C_0^\infty(\Gamma)$ in $H^{1/2}(\mathbb R^{n-1})$.

The energy of the harmonic field $U_{\bvfi}$ is given by its
Dirichlet integral, and Green's theorem yields
\begin{align*}
\lefteqn{
\intli_{\R^n\sm \Gam} |\na U_{\bvfi}(\bx)|^2\d\bx}\\
&=
-\intli_{\R^{n-1}}U_{\bvfi}(\bx\p,+0)\Big(\f{\pa}{\pa x_n}
U_{\bvfi}(\bx\p,+0)\Big)\d\bx\p+
\intli_{\R^{n-1}} U_{\bvfi}(\bx\p,-0)\Big(\f{\pa}{\pa x_n}
U_{\bvfi}(\bx\p,-0)\Big)\d\bx\p\\
&=
\intli_\Gam \ha \bvfi(\bx\p)\Big(\f{\pa}{\pa
  x_n}(-\bW\vfi)(\bx\p,-0)\Big) \d\bx\p
-\intli_\Gam \ha \bvfi(\bx\p)\Big(\f{\pa}{\pa
  x_n}(-\bW\vfi)(\bx\p,+0)\Big) \d\bx\p\\
&=
-c_n\intli_\Gam \bigg\{\ha\bvfi(\bx\p)\f{\pa}{\pa x_n}
\bigg(\intli_{\R^{n-1}}
\f{(\bx\p-x_n\be_n-\by\p)\cd\be_n}{|\bx\p-x_n\be_n-\by\p|^n}
\big(\bvfi(\by\p)-\bvfi(\bx\p)\big)\d\by\p\bigg)\\
&\qquad\qquad\
+\ha\bvfi(\bx\p)^2\f{\pa}{\pa x_n}
\bigg(\intli_{\R^{n-1}}
\f{(\bx\p-x_n\be_n-\by\p)\cd\be_n}{|\bx\p-x_n\be_n-\by\p|^n}\d\by\p\bigg)
\bigg\}\d\bx\p\\
&\qquad
+c_n\bigg\{\ha\bvfi(\bx\p)\f{\pa}{\pa x_n}
\bigg(\intli_{\R^{n-1}}
\f{(\bx\p+x_n\be_n-\by\p)\cd\be_n}{|x\p+x_n\be_n-\by\p|^n}
\big(\bvfi(\by\p)-\bvfi(\bx\p)\big)\d\by\p\bigg)\\
&\qquad\qquad
+\ha\bvfi(\bx\p)^2\f{\pa}{\pa x_n}
\bigg(\intli_{\R^{n-1}}
\f{(\bx\p+x_n\be_n-\by\p)\cd\be_n}{|\bx\p+x_n\be_n-\by\p|^n}\d\by\p\bigg)
\bigg\}\d\bx\p.
\end{align*}
We can interchange differentiation and integration in
this expression by means of Hadamard's finite part
integral. Namely, due to
\[
\intli_{\R^{n-1}} \f{(\bx\p\mp
  x_n\be_n-\by\p)\cd\be_n}{|\bx\p\mp x_n\be_n-\by\p|^n}
\d\by\p=\pm\ha\om_n
\]
with the constant $\om_n=\f{1}{c_n}$, it holds
\begin{align*}
\intli_{\R^n\sm\Gam}|\na U_{\bvfi}(\bx)|^2 \d\bx
&=
(n-2) c_n\intli_\Gam \bvfi(\bx\p)
\intli_\Gam |\bx\p-\by\p|^{-n}\big(\bvfi(\by\p)-\bvfi(\bx\p)\big) \d\by\p \d\bx\p\\
&=
(n-2)\intli_\Gam \bvfi(\bx\p)(\bD\bvfi)(\bx\p)\d\bx\p ,
\end{align*}
where
\[
\bD\bvfi(\bx\p)=-\pf c_n \intli_\Gam
\Big(\f{\pa}{\pa x_n}\
\f{\pa}{\pa y_n}|\bx-\by|^{2-n}\Big)\bvfi (\by\p)\d\by\p
\]
is Hadamard's finite part integral with $\bx=(\bx\p,0)$, $\by=(\by\p,0)$
and with the hypersingular kernel function
$k_{\bf D}({\bf x},{\bf y}) = (n-2)|\bx\p-\by\p|^{-n}c_n$,
that is
\begin{equation}\label{eq:PF}
\intli_{\R^n\sm\Gam} |\na U_{\bvfi}(\bx)|^2\d\bx
=
(n-2)c_n \intli_\Gam \pf \intli_\Gam |\bx\p-\by\p|^{-n}
\bvfi(\bx\p)\bvfi(\by\p)\d\bx\p \d\by\p.
\end{equation}
(For the definition of Hadamard's partie finie integral operators, see
 \cite{ha} and \cite[Chapter~3.2]{H-W}.)

Hence, the finite part integral on the right of \eqref{eq:PF} which has the Riesz
kernel $|\bx\p-\by\p|^{-n}$ for $\bx\p, \by\p\in\Gam\sbs \R^{n-1}$ defines
the energy of the harmonic double layer potential in $\R^n\sm\Gam$
given by the Dirichlet integral on the left of \eqref{eq:PF}. Since the Riesz
kernel is a homogeneous function of degree $-n$, it defines on $\Gam\sbs
\R^{n-1}$ a strongly elliptic pseudodifferential operator ${\bf D}$ of
order $1$ (see \cite[Section 7.1.2]{H-W}).

\section{Strongly singular Riesz energy on a
  mani\-fold}\label{s:manifold}
\setcounter{equation}{0}
\setcounter{theorem}{0}
In all that follows, without stated otherwise, we fix $n\geq2$ and $-1<\al<1$,
and write $\beta:=1-\alpha$.

In $\mathbb R^n$, consider a strongly singular Riesz
kernel $|\mathbf x-\mathbf y|^{\alpha-n}$ and a manifold $\Gam:=\bigcup_{\ell\in I}
\,\Gamma_\ell$, where $\Gamma_\ell$ are finitely many compact, connected,
mutually disjoint, boundaryless, $(n-1)$-dimensional, oriented $C^\infty$-manifolds,
immersed into~$\mathbb{R}^n$. Then, the surface measure $\d s$ on $\Gamma$
is well defined.

In what follows, $(\boldsymbol\psi,\boldsymbol\varphi)_{L^2(\Gamma)}$ will
stand for the extension of the $L^2$-scalar product to dualities as $\boldsymbol\psi
\in H^{-\bet/2}(\Gam)$ and $\boldsymbol\varphi\in H^{\bet/2}(\Gam)$ and also
to the applications of distributions $\boldsymbol\psi$ on $\Gam$ operating
on $\boldsymbol\varphi\in C^\infty(\Gam)$.

We call the strongly singular partie finie integral of the Riesz kernel
\begin{equation}\label{eq:2.1}
(V_\bet\bvfi,\bvfi)_{L_2(\Gam)}= \intli_\Gam\pf \intli_\Gam|\bx-\by|^{\al-n}
\bvfi(\bx)\bvfi(\by)\, \d s_\bx\,\d s_\by=:E_\alpha(\boldsymbol\varphi)
\end{equation}
with respect to $|\bx-\by|\ge \vep_0\to 0$, $\vep_0>0$,
operating on $\bvfi\in C^\infty(\Gamma)$, the \emph{energy\/}
of
the Riesz potential
\[
U_{\bvfi}(\bx)=\pf \intli_{\by\in\Gam} |\bx-\by|^{\al-n}\bvfi(\by)\,\d s_\by,
\quad{\bf x}\in\mathbb{R}^n,
\]
generated by the surface charge $\bvfi$ (see e.g.\ \cite{H-W}).
For $\bvfi\in C^\8(\Gam)$, the Hadamard partie finie integral operator
\[
V_\bet\bvfi(\bx)
=\pf \intli_{{\bf y}\in\Gam}|\bx-\by|^{\al-n}\bvfi(\by)\,\d s_\by,\quad
\bx\in\Gam,
\]
which underlies \eqref{eq:2.1}, is for $0\le\al<1$ given by
\[
V_\bet\bvfi(\bx)=\pv \intli_{\by\in\Gam\wedge |\bx-\by|>0}\
|\bx-\by|^{-\bet-(n-1)} \big\{\bvfi(\by)-\bvfi(\bx)\big\}\,\d s_\by
+h(\bx)\bvfi(\bx),
\]
where
\begin{equation}\label{eq:h}
h(\bx)=\pf\lim_{\de\to 0}\intli_{\by\in\Gam\wedge
  0<\de<|\bx-\by|} \
|\bx-\by|^{-\bet-(n-1)}\,\d s_\by,
\end{equation}
and for $-1<\al<0$ by
\begin{align*}
V_\bet\bvfi(\bx)
&=
\pv\!\!\intli_{\by\in\Gam\wedge |\bx-\by|>0}\!\!
|\bx-\by|^{-\bet-(n-1)}
\big\{\bvfi(\by)-\bvfi(\bx)-(\by-\bx)\cdot \na\bvfi(\bx)\big\}\,\d s_\by\\
&\hspace*{5cm}
+h(\bx)\bvfi(\bx)+\bh(\bx)\cdot \na\bvfi(\bx),
\end{align*}
where
\begin{equation}\label{eq:bold h}
\bh(\bx)=\pf\lim_{\de\to 0}\intli_{\by\in\Gam\wedge 0<\de<|\bx-\by|}
|\bx-\by|^{-\bet-(n-1)}(\by-\bx)\,\d s_\by.
\end{equation}
The abbrevation $\pv$ stands for the
Calderon--Mikhlin principal value integral (see \cite{Mi-Pr}). (See Appendix~\ref{a:a}
for the explicit computation of the partie finie integrals $h(\bx)$ and
$\bh(\bx)$.)

\begin{theorem}[see {\cite[Chapter 8]{H-W}}]\label{th:2.1}
The partie finie integral operator $V_\bet$ is a strongly elliptic
pseudodifferential operator of order $\bet=1-\al\in (0,2)$ on $\Gam$.
The principal symbol of this operator is given by the equivalence
class associated with the homogeneous function
\begin{equation}\label{eq:sy}
 a(\bxi)=
C(n-1,\bet)|\bxi|^\bet,
\text{where}C(n-1,\bet)
=
2^{-\bet}\pi^{\f{n-1}{2}}
 \tf{\Gam(\f{-\bet}{2})}{\Gam(\f{n-\al}{2})} \text{and} \xi\in\mathbb{R}^{n-1}.
\end{equation}
$V_\bet$  defines the linear and
continuous mapping
$V_\bet\dopu H^s(\Gam)\to H^{s-\bet}(\Gam)$ for every $s\in\R$. In
particular, for
$s=\bet/2$, $V_\bet$ maps $H^{\bet/2}(\Gam)$ into $H^{-\bet/2}(\Gam) $
and there exist $0<c_0\le c_2$ and $c_1\ge 0$ such that the inequalities
\begin{equation}\label{eq:2.2}
c_0\|\bvfi\|^2_{H^{\bet/2}(\Gam)}-c_1\|\bvfi\|^2_{L^2(\Gam)}
\le (V_\bet\bvfi,\bvfi)_{L_2(\Gam)}\le c_2
\|\bvfi\|_{H^{\bet/2}(\Gam)}^2
\end{equation}
are satisfied for any $\bvfi\in H^{\bet/2}(\Gam)$.
\end{theorem}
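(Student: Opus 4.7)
The plan is to reduce the statement to a local computation on Euclidean charts of $\Gam$ and then invoke the classical calculus of pseudodifferential operators on a closed manifold. I would fix a smooth atlas of $\Gam$ with a subordinate partition of unity $\{\chi_j\}$ and study, for each pair $j,k$, the localized operator $\chi_j V_\bet \chi_k$. For $j\neq k$ with disjoint supports of $\chi_j$ and $\chi_k$, the kernel $|\bx-\by|^{\al-n}$ is smooth on $\supp\chi_j\times\supp\chi_k$ and contributes a smoothing operator. For $j=k$, after pulling back to a coordinate patch the kernel becomes, up to smooth factors from the chart Jacobian and the first fundamental form, the partie finie of $|x-y|^{\al-n}$ on $\R^{n-1}$. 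The two regularizing terms $h(\bx)\bvfi(\bx)$ and $\bh(\bx)\cdot\na\bvfi(\bx)$ in the cases $0\le\al<1$ and $-1<\al<0$ are precisely what is required in order to subtract the zeroth- respectively first-order Taylor terms of $\bvfi$ against the singular kernel; freezing the coefficients of the metric at a base point then reduces us, modulo operators of order strictly less than $\bet$, to convolution on $\R^{n-1}$ with the partie finie of $|x|^{-(n-1)-\bet}$.

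The central computation is then the Fourier transform on $\R^{n-1}$ of this partie finie distribution. Starting from the absolutely convergent range of the complex parameter $\lambda$ in $|x|^\lambda$ and analytically continuing, one obtains the classical Riesz--Hadamard identity
\begin{equation*}
\mathcal{F}\bigl[\pf\,|x|^{\al-n}\bigr](\bxi) = C(n-1,\bet)\,|\bxi|^{\bet},
\end{equation*}
with $C(n-1,\bet)$ exactly as in \eqref{eq:sy}, the precise constant following from the functional equation of $\Gam$. This identifies the principal symbol of the localized operator and hence shows that $V_\bet$ is a classical pseudodifferential operator of order $\bet$ on $\Gam$ with homogeneous principal symbol $a(\bxi)=C(n-1,\bet)|\bxi|^\bet$. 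Since $C(n-1,\bet)$ has a definite sign on the whole interval $\bet\in(0,2)$ and $|\bxi|^\bet>0$ for $\bxi\neq 0$, the operator is strongly elliptic in the pseudodifferential sense.

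Once $V_\bet$ has been identified as a classical pseudodifferential operator of order $\bet$ on the closed manifold $\Gam$, the continuous mapping $V_\bet\dopu H^s(\Gam)\to H^{s-\bet}(\Gam)$ for every $s\in\R$ is a standard consequence of the $\Psi$DO calculus. Specialization to $s=\bet/2$ together with the $H^{-\bet/2}$--$H^{\bet/2}$ duality pairing gives the upper bound in \eqref{eq:2.2}. The lower bound is G{\aa}rding's inequality for the strongly elliptic operator $V_\bet$: definiteness of the real part of the principal symbol yields constants $c_0>0$ and $c_1\ge0$ with
\begin{equation*}
c_0\|\bvfi\|^2_{H^{\bet/2}(\Gam)} - c_1\|\bvfi\|^2_{L^2(\Gam)} \le (V_\bet\bvfi,\bvfi)_{L_2(\Gam)}
\end{equation*}
first on $C^\infty(\Gam)$, and then on all of $H^{\bet/2}(\Gam)$ by density.

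The main obstacle will be the bookkeeping in the localization step: one has to verify that the metric-induced Taylor corrections to $|\bx-\by|^{\al-n}$, together with the geometric contributions contained in the coefficients $h(\bx)$ and $\bh(\bx)$ (computed explicitly in the appendix), contribute only to symbols of order strictly less than $\bet$, so that the Euclidean Fourier transform identified above indeed captures the full principal symbol on $\Gam$. Once this has been settled, the symbol formula \eqref{eq:sy} is a direct consequence of classical distributional Fourier analysis, and the two-sided estimate \eqref{eq:2.2} is immediate from the pseudodifferential calculus on the closed manifold $\Gam$.
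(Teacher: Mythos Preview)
Your proposal is correct and follows essentially the same route as the paper: localize via a partition of unity, identify the principal symbol through the Euclidean Fourier transform of $\pf\,|x|^{\alpha-n}$, and deduce the mapping properties and the two-sided estimate \eqref{eq:2.2} from the $\Psi$DO calculus and G{\aa}rding's inequality. The paper carries out the ``bookkeeping'' you flag by introducing Martensen's surface polar coordinates (Appendix~\ref{a:b}) to obtain an explicit pseudohomogeneous expansion of the localized kernel, which shows that the subprincipal correction has order $\bet-2$; your ``freezing the metric'' description is the same idea in different language. One point you should add: at the integer value $\bet=1$ the analytic continuation of the Riesz symbol is delicate, and the paper verifies the Tricomi condition $\int_{|\Theta|=1}\Theta^{\bal'}\,\d\omega=0$ for $|\bal'|=1$ to confirm that $V_\bet$ is a genuine classical $\Psi$DO there as well.
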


\begin{proof}
For justifying the inequalities \eqref{eq:2.2}, recall that for each of
the components of the  $C^\infty$-manifolds $\Gamma_\ell$,
$\ell\in I$, immersed into~$\mathbb R^n$, we may associate a family
of finite-dimensional atlases $\mathfrak A_{\ell}$ (see \cite{ku}). Each
atlas $\mathfrak A_{\ell}$ is a family of local charts $(O_{{\ell}r},
\mathcal U_{{\ell}r}, \mathcal X_{{\ell}r})$, where $r$ ranges
through a finite  set~$R_{\ell}$. The open sets
$O_{{\ell}r}\subset\Gamma_{\ell}$ define an open covering of
$\Gamma_{\ell}$, while $\mathcal X_{\ell r}$ is a
$C^\infty$-diffeomorphism of $O_{{\ell}r}$ onto $\mathcal
U_{{\ell}r}\subset\mathbb R^{n-1}$. Let $\{\beta_{{\ell}r}\}_{r\in
R_{\ell}}$ be a $C^\infty$-partition of unity of $\Gamma_{\ell}$
which is subordinate to the atlas $\mathfrak A_{\ell}$. In addition
to the partition of unity, let $\{\gamma_{{\ell}r}\}_{r\in R_{\ell}}$
be a second system of functions $\gamma_{{\ell}r}\in
C_0^\infty(O_{{\ell}r})$ with the properties
\[
\gamma_{{\ell}r}({\bf x})=1\text{for all} {\bf x}\in{\rm
supp}\,\beta_{{\ell}r}\text{and} 0\leq \gamma_{\ell r}.
\]
Thus, it holds that
\[
\gamma_{{\ell}r}({\bf
x})\beta_{{\ell}r}({\bf x}) =\beta_{{\ell}r}({\bf
x})\text{and}\beta_{{\ell}r}({\bf
x})\gamma_{{\ell}r}({\bf x})=\beta_{{\ell}r}({\bf x})\text{for
all}{\bf x}\in\Gamma_{\ell}.
\]
With respect to the atlas $\mathfrak A_{\ell}$, let
$\mathcal X_{{\ell}r\star}$ denote the corresponding
pushforwards and $\mathcal X_{{\ell}r}^\star$ the pullbacks.
Then $\mathcal X_{{\ell}r\star}\beta_{{\ell}r}\in C_0^\infty
(\mathcal U_{{\ell}r})$.

Without loss of generality, the local parametric representations can
always be chosen in such a way that at one point ${\bf
x}^\circ_{{\ell}r}\in O_{{\ell}r}$ where $\beta_{{\ell}r}({\bf
x}^\circ_{{\ell}r})=1$ we have $\mathcal X_{{\ell}r}({\bf
x}^\circ_{{\ell}r})={\bf 0}$ and, moreover, at this point the tangent
bundle
\[
\frac{\partial{\bf x}}{\partial{\bf x}'}\Biggl|_{{\bf
x}'={\bf 0}}=\frac{\partial\mathcal X_{{\ell}r}^{-1}({\bf
x}')}{\partial{\bf x}'}\Biggl|_{{\bf x}'={\bf 0}},\text{where}
{\bf x}':=\mathcal X_{{\ell}r}({\bf x}),
\]
forms a positively oriented system of $n-1$ mutually orthogonal unit vectors.
This implies that the Riemannian tensor of $\Gamma_{\ell}$ in the local
coordinates at the point ${\bf x}^\circ_{{\ell}r}$ is the unity matrix.
Hence,  the surface measure satisfies
\[
\d s_{\ell}({\bf x}) =J_{{\ell}r} ({\bf x}')\,\d{\bf
x}\p\text{where}{\bf x}\p\in\mathcal U_{{\ell}r},\
\text{and} J_{\ell r}({\bf 0})={\bf 1}.
\]

Given an atlas $\fA_\ell$ on $\Gam_\ell$, define
\[
d_\ell:= \min_{r\in R_\ell}{\rm diam\ } \cU_{\ell r}.
\]
Thus, one can choose $\delta_0>0$ so that for any
given $0<\delta<\delta_0$ there exists a finite-dimensional
atlas~$\mathfrak A^\delta_{\ell}$ satisfying all the above formulated
properties
  and $d_{\ell}=\delta$. Hence, we have a
whole family of finite atlases~$\mathfrak A^\delta_{\ell}$ with
$0<\delta<\delta_0$ which will be under consideration. (We then
shall omit the index~$\delta$ in the notation.)

Note that the Jacobians~$J_{{\ell}r}$ depend on the geometric
properties of $\Gamma_{\ell}$ only, and
$J_{{\ell}r}$ together with their derivatives are uniformly
continuous relative to $\delta\in(0,\delta_0)$.

Corresponding to the partition of unity, the pseudodifferential
operator~$V_\bet$ on~$\Gamma$ can be decomposed  as
\[
V_\bet
=
\sumli_{\ell\in I}\ \sumli_{r\in R_\ell}
\bet_{\ell r} V_\bet\gam_{\ell r}+Z_1
=
\sumli_{\ell\in I}\ \sumli_{r\in R_\ell}
\gam_{\ell r}V_\bet\bet_{\ell r}+Z_2
\]
and
\begin{equation}\label{eq:l1}
V_\bet
=\sum_{{\ell}\in I}\,\sum_{r\in R_{\ell}}\,
\beta_{{\ell}r} \cX_{\ell r}^\star V_{\ell r}\cX_{\ell
  r\star}\gamma_{{\ell}r}+Z_1 =\sum_{{\ell}\in I}\,\sum_{r\in
R_{\ell}}\,\gamma_{{\ell}r} \cX_{\ell r}^\star V_{\ell r}\cX_{\ell
  r\star}\beta_{{\ell}r}+Z_2.
\end{equation}
Herein, $Z_1,Z_2$ are smoothing operators of order $-\infty$ in view
of $|\bx-\by|^{\al-n}\in C^\infty(\Gam_p\x\Gam_k)$ for $p\neq k$ and
$\supp(1-\gam_{\ell r}) \cap \supp (\bet_{\ell r})=\emptyset$ if
$\bx,\by\in\Gam_\ell$ (see also \cite[Chapter 8]{H-W}).
Moreover,
\begin{equation}\label{eq:l2}
V_{{\ell}r}\bvfi({\bf x}\p) = \pf\int_{\mathcal
U_{{\ell}r}}\,\bigl|\mathcal X_{{\ell}r}^{-1}({\bf x}\p)-\mathcal
X_{{\ell}r}^{-1}({\bf y}\p)\bigr|^{\alpha-n}\bvfi({\bf y}\p)
J_{{\ell}r}({\bf y}\p)\d{\bf y}\p,\ \bvfi\in
C_0^\8(\cU_{\ell r})
\end{equation}
are the localized operators in the parametric domains
$\mathcal U_{{\ell}r}$, defined by the operator~$V_\bet$.

The inequalities \eqref{eq:2.2} now follow locally on each chart of
the atlas $\fA_\ell$ for the localized operators $V_{\ell r}$ in
local coordinates in $\cU_{\ell r}$.
With Martensen's surface polar coordinates
(\eqref{eq:b5}, \eqref{eq:3++} in Appendix \ref{a:b}),
the kernel of $V_{\ell r}$ admits a pseudohomogeneous
asymptotic expansion of the form
\[
k(\mathbf{x}^\prime,\varrho)=\varrho^{-\beta-(n-1)}\Bigg\{1+\sum_{j\geq 2} k_{\ell r,j}
(\mathbf{x}^\prime,\varrho)\Bigg\},
\]
where
\[
 k_{\ell r,j}(\mathbf{x}^\prime,t\varrho)=
 t^j k_{\ell r,j}(\mathbf{x}^\prime,\varrho)\text{for} t>0,\ \varrho>0
\]
 since $|\mathbf{x}^\prime- \mathbf{x}_0^\prime|^2$ satisfies
 the expansion (B9). Correspondingly, the symbol $a(\mathbf{x}^\prime,
 \boldsymbol{\xi}^\prime)$ of $V_{\ell r}$ has the asymptotic expansion
\[
 a(\mathbf{x}^\prime,\boldsymbol{\xi}^\prime)=
 |\boldsymbol{\xi}^\prime|^\beta \Bigg\{1+\sum_{j\geq 2}
 a^0_{-j}(\mathbf{x}^\prime,\boldsymbol{\xi}^\prime)\Bigg\},
\]
where $a^0_{-j}(\mathbf{x}^\prime,t\boldsymbol{\xi}^\prime)=
t^{-j}a^0_{-j}(\mathbf{x}^\prime,\boldsymbol{\xi}^\prime)$
with $t>0$, $\boldsymbol{\xi}^\prime\neq {\bf 0}$.

Then, Fourier transform and Parseval's theorem yield
\begin{equation}\label{eq:3.8A}
(V_{\ell r}\boldsymbol\varphi,\boldsymbol\varphi)_{L_2(\cU_{\ell r})}
=\int\limits_{\mathbb{R}^{n-1}}|\bxi^\prime|^\beta |\widehat{\bvfi}(\bxi^\prime)|^2 \d\bxi^\prime
+(A_{\ell r}^{\beta-2}\bvfi,\bvfi)_{L_2(\mathcal{U}_{\ell r)}}
+(R_{\ell r}\bvfi,\bvfi)_{L_2(\mathcal{U}_{\ell r})},
\end{equation}
where $A_{\ell r}^{\beta-2}$ is a pseudodifferential
operator of order $\beta-2$:
\[
(A_{\ell r}^{\beta-2}\bvfi)(\mathbf{x}^\prime) =
\int\limits_{\mathbb{R}^{n-1}} e^{i\mathbf{x}^\prime\cdot\bxi^\prime}
\bPsi_{\ell r}(\bxi^\prime)|\bxi^\prime|^\beta \sum\limits_{j\geq 2} a_{-j}^0
(\mathbf{x}^\prime,\bxi^\prime)\widehat{\bvfi}(\bxi^\prime) \d\bxi^\prime.
\]
Here, the function $\bPsi_{\ell r}\in C^\infty(\mathbb{R}^{n-1})$ is arbitrary
but fixed such that $0\leq\bPsi_{\ell r}(\bxi^\prime)\leq 1$ and
\[
\bPsi_{\ell r}(\bxi^\prime) =\bnull \text{for}|\bxi^\prime|\leq\ha
\text{and}\bPsi_{\ell r}(\bxi^\prime)=1 \text{for}|\bxi^\prime|\geq 1.
\]
The remainder operator
$R_{\ell r}\bvfi (\mathbf{x})= \int_\Gamma R_{\ell r} (\mathbf{x},
\mathbf{y})\bvfi (\mathbf{y})\d s_{\bf y}$ is a smoothing operator
with the smooth kernel function $R_{\ell r}\in C^\infty (\Gamma
\times\Gamma)$. Hence, there exists a constant $c_{\ell r}>0$
such that
\[
(V_{\ell r}\bvfi,\bvfi)_{L_2(\mathcal{U}_{\ell r})}
\leq
c_{\ell r} \int\limits_{\mathbb{R}^{n-1}}(1+|\boldsymbol{\xi}^\prime|)^\beta
|\widehat{\boldsymbol\varphi}(\boldsymbol{\xi}^\prime)|^2 \d\boldsymbol{\xi}^\prime
=
c \|\boldsymbol\varphi\|^2_{H^{\beta/2}(\cU_{\ell r})}
\]
which implies with some constant $c_2>0$ that
\begin{equation}\label{eq:3.11r}
(V_\beta\boldsymbol\varphi,\boldsymbol\varphi)_{L_2(\Gamma)}
\leq
c_2\|\boldsymbol\varphi\|^2_{H^{\beta/2}(\Gamma)}.
\end{equation}
Vice versa, from
\[
 (V_\beta\boldsymbol\varphi,\boldsymbol\varphi)_{L_2(\cU_{\ell r})}
\geq
\int\limits_{\mathbb{R}^{n-1}} (1+|\boldsymbol{\xi}^\prime|)^\beta |\widehat{\boldsymbol\varphi}
(\boldsymbol{\xi}^\prime|^2\d\boldsymbol{\xi}^\prime
-c^\prime_{\ell r}\int\limits_{\mathbb{R}^{n-1}} (1+|\boldsymbol{\xi}^\prime|)^{\beta-2}
|\widehat{\boldsymbol\varphi}(\boldsymbol{\xi}^\prime)|^2\d\boldsymbol{\xi}^\prime,
\]
after summation over $\ell\in I$, we obtain the G{\aa}rding inequality
\[
(V_\beta\boldsymbol\varphi,\boldsymbol\varphi)_{L_2(\Gamma)}
\geq c_0 \|\boldsymbol\varphi\|^2_{H^{\beta/2}(\Gamma)}
-c_1\|\boldsymbol\varphi\|^2_{L_2(\Gamma)}.
\]

The embedding $H^{\beta/2}(\Gamma)\hookrightarrow L_2(\Gamma)$
is compact since $0<\beta<2$. For $\beta=1\in\mathbb{N}_0$ the
Tricomi condition needs to be satisfied for $V_\beta$ being a
pseudodifferential operator and reads here as
\[
 \int\limits_{|\Theta|=1} \Theta^{\boldsymbol\alpha^\prime}\d\omega(\Theta)=0
 \text{for}|\boldsymbol\alpha^\prime|=1
\]
(see \cite[Theorem 7.1.7]{H-W}). In addition, we have
$a^0_{-1}(\mathbf{x}^\prime,\boldsymbol{\xi}^\prime)=0$. So, $V_\beta$
is a classical pseudodifferential operator on $\Gamma$ of order $\beta\in (0,2)$.
\end{proof}

Now, we introduce  a set of so-called \emph{admissible measures}
or charges located on $\Gam$. Recall that $\Gamma=\bigcup_{\ell\in I}\Gam_\ell$,
where the finitely many $\Gam_\ell$ are compact, nonintersecting, boundaryless,
connected, $(n-1)$-dimensional, orientable $C^\8$-manifolds, immersed into
$\R^n$. With each $\Gamma_\ell$ we associate a prescribed sign $\al_\ell\in\{-1,1\}$
where $\al_\ell=+1$ for $\ell\in I^+$ and $\al_\ell=-1$ for $\ell\in I^-$. Then $I=I^+\cup I^-$
and $I^+\cap I^-=\emptyset$, $I^-=\emptyset$ is admitted.
Let $\fM(\Gam)$ denote the $\si$-algebra of signed Borel measures $\bnu$ on
$\Gam$ equipped with the topology of pointwise convergence on
$C(\Gam)$, the class of all real-valued continuous functions on
$\Gam$.

Next, consider the manifold $\Gam$ being loaded by charges
of the form
\begin{equation}\label{eq:herzherz}
\bmu=\sum_{\ell\in I}\, \al_\ell\mu^\ell
\end{equation}
where, for every $\ell\in I$, $\mu^\ell$ is a nonnegative Borel measure
on $\Gamma_\ell$. The convex cone of all
signed measures $\bmu$ of the form~\eqref{eq:herzherz} will be denoted by
$\mathfrak{M}^+(\Gamma)$.

The following theorem deals with absolutely continuous
$\boldsymbol\Sigma\in\mathfrak{M}^+(\Gamma)$, i.e.\
$\mathrm d\boldsymbol\Sigma=\boldsymbol\sigma\,\d s$,
with densities $\boldsymbol\sigma\in\mathcal K^{\beta/2}(\Gamma)$,
\[
\mathcal K^{\beta/2}(\Gamma):=\Bigg\{\boldsymbol\sigma
=\sum_{\ell\in I}\,\alpha_\ell\sigma^\ell,
\ \text{where\ }\sigma^\ell\in H^{\beta/2}(\Gamma_\ell)
\text{\ and\ }\sigma^\ell\geq 0\Bigg\}.
\]
For brevity, we shall often identify an absolutely continuous
Borel measure $\boldsymbol\Sigma\in\mathfrak{M}^+(\Gamma)$
with~$\boldsymbol\sigma$, its density. Likewise, the cone of all
$\boldsymbol\Sigma\in\mathfrak{M}^+(\Gamma)$ with
$\boldsymbol\sigma\in\mathcal K^{\beta/2}(\Gamma)$ will be
denoted by~$\mathcal K^{\beta/2}(\Gamma)$, provided that
this will not cause any misunderstanding. Similar to as it has
been done in~(\ref{eq:2.1}), we define {\it the Riesz energy\/}
of $\boldsymbol\Sigma=\boldsymbol{\sigma}\in\mathcal
K^{\beta/2}(\Gamma)$ by
\[
E_\alpha(\boldsymbol{\Sigma})
:= (V_\beta\boldsymbol{\sigma},\boldsymbol{\sigma})_{L_2(\Gamma)}
= \intli_\Gamma \pf \intli_{\Gamma}|{\bf x}-{\bf y}|^{\alpha-n}\,
\d\boldsymbol{\Sigma}({\bf x})\d\boldsymbol{\Sigma}({\bf y}).
\]

\begin{theorem}\label{th:x}
For any\/ $\boldsymbol\Sigma=\boldsymbol\sigma\in
\mathcal K^{\beta/2}(\Gamma)$, the Riesz energy is finite
and satisfies the inequalities
\begin{equation}\label{eq:stp}
c_0^\prime\|\boldsymbol\sigma\|^2_{H^{\beta/2(\Gamma)}}\leq E_\alpha(\boldsymbol{\Sigma})
:= (V_\beta\boldsymbol{\sigma},\boldsymbol{\sigma})_{L_2(\Gamma)}
\leq c_1^\prime\|\boldsymbol\sigma\|^2_{H^{\beta/2(\Gamma)}},
\end{equation}
the constants\/ $c_0^\prime$ and\/ $c_1^\prime$ being
strictly positive and independent of\/~$\bSi$. This means
that $V_\beta$ is continuously invertible on $\mathcal
K^{\beta/2}(\Gamma)$.
\end{theorem}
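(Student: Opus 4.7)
The upper bound is essentially immediate. Writing $\boldsymbol\sigma=\sum_{\ell\in I}\alpha_\ell\sigma^\ell$ with $\sigma^\ell\in H^{\beta/2}(\Gamma_\ell)$ and $\sigma^\ell\geq 0$, the mutual disjointness of the closed components $\Gamma_\ell$ gives $\|\boldsymbol\sigma\|^2_{H^{\beta/2}(\Gamma)}=\sum_{\ell\in I}\|\sigma^\ell\|^2_{H^{\beta/2}(\Gamma_\ell)}<\infty$, so $\boldsymbol\sigma\in H^{\beta/2}(\Gamma)$. Substituting $\boldsymbol\varphi=\boldsymbol\sigma$ into the right-hand inequality of \eqref{eq:2.2} yields both the finiteness of $E_\alpha(\boldsymbol\Sigma)$ and the upper bound with $c_1'=c_2$.

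For the lower bound, my plan is to promote the G{\aa}rding inequality in \eqref{eq:2.2} to genuine coercivity. Since $V_\beta\colon H^{\beta/2}(\Gamma)\to H^{-\beta/2}(\Gamma)$ is a strongly elliptic pseudodifferential operator of positive order $\beta\in(0,2)$ on a compact manifold, it is self-adjoint (the bilinear form is symmetric) and Fredholm of index zero. Combined with the compact embedding $H^{\beta/2}(\Gamma)\hookrightarrow L_2(\Gamma)$, a standard Fredholm-alternative argument shows that $V_\beta$ is an isomorphism --- and therefore satisfies $c_0'\|\boldsymbol\sigma\|^2_{H^{\beta/2}(\Gamma)}\leq (V_\beta\boldsymbol\sigma,\boldsymbol\sigma)_{L_2(\Gamma)}$ on $\mathcal K^{\beta/2}(\Gamma)$ --- as soon as $V_\beta$ has no nonzero element of $\mathcal K^{\beta/2}(\Gamma)$ in its kernel. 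Equivalently, if the lower bound failed, one could extract a sequence $\boldsymbol\sigma_k\in\mathcal K^{\beta/2}(\Gamma)$ with $\|\boldsymbol\sigma_k\|_{H^{\beta/2}(\Gamma)}=1$ and $E_\alpha(\boldsymbol\sigma_k)\to 0$; compactness of the embedding together with G{\aa}rding would force strong $H^{\beta/2}$-convergence to a nonzero $\boldsymbol\sigma_\infty\in\mathcal K^{\beta/2}(\Gamma)$ with $V_\beta\boldsymbol\sigma_\infty=0$, the situation to be precluded.

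The main obstacle will therefore be to verify this injectivity on the cone. I would adapt the potential-theoretic identity of Section~\ref{s:double} to the general strongly singular regime: viewing the Riesz potential
\[
U_{\boldsymbol\sigma}(\bx)=\pf\intli_\Gamma|\bx-\by|^{\alpha-n}\boldsymbol\sigma(\by)\,\d s_\by
\]
as a tempered distribution on $\mathbb R^n$, one aims at a Fourier representation of the form
\[
E_\alpha(\boldsymbol\sigma)=c(n,\alpha)\intli_{\mathbb R^n}|\widehat{\boldsymbol\sigma\otimes \d s}(\bxi)|^2\,|\bxi|^{-\alpha}\,\d\bxi,
\]
which is strictly positive whenever $\boldsymbol\sigma\neq 0$ by injectivity of the Fourier transform. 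Making this representation rigorous --- justifying it through the partie finie regularization, controlling the decay of $U_{\boldsymbol\sigma}$ at infinity, and exploiting the prescribed sign structure of $\mathcal K^{\beta/2}(\Gamma)$ to rule out kernel elements --- is the analytic heart of the argument and precisely the point that distinguishes the strongly singular regime from the weakly singular case treated in \cite{HWZ1,HWZ2}.
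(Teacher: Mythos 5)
Your treatment of the upper bound is fine and is exactly the paper's (insert $\boldsymbol\sigma$ into the right-hand inequality of \eqref{eq:2.2}), but the lower bound contains a genuine gap, at two levels. First, the compactness reduction is stated incorrectly: from $\|\boldsymbol\sigma_k\|_{H^{\beta/2}(\Gamma)}=1$ and $E_\alpha(\boldsymbol\sigma_k)\to0$ you can extract a weak limit $\boldsymbol\sigma_\infty\neq\boldsymbol 0$ in the (convex, closed, hence weakly closed) cone, but all that follows for it is $(V_\beta\boldsymbol\sigma_\infty,\boldsymbol\sigma_\infty)\le 0$; you cannot conclude $V_\beta\boldsymbol\sigma_\infty=\boldsymbol 0$, nor strong $H^{\beta/2}$-convergence, because the form is not known to be nonnegative (that is precisely what is being proved), so neither Cauchy--Schwarz for the form nor lower semicontinuity of the energy alone is available. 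Thus ``no nonzero cone element in $\ker V_\beta$'' is strictly weaker than what you need; the correct reduction is to strict positivity of $E_\alpha$ on $\mathcal K^{\beta/2}(\Gamma)\setminus\{\boldsymbol 0\}$, and the whole burden then rests on the step you leave open. Second, the route you propose for that step --- an identity $E_\alpha(\boldsymbol\sigma)=c(n,\alpha)\int_{\mathbb R^n}|\widehat{\boldsymbol\sigma\,\mathrm{d}s}(\bxi)|^2|\bxi|^{-\alpha}\,\mathrm{d}\bxi$ with $c(n,\alpha)>0$ --- cannot be invoked in this regime and is doubtful as stated: for $\alpha<1$ the unregularized double integral of $|\bx-\by|^{\alpha-n}$ against a measure carried by an $(n-1)$-dimensional surface diverges, and for $\alpha\le0$ the kernel is not even locally integrable in $\mathbb R^n$, so $E_\alpha$ exists only as a partie finie; the regularization brings in the curvature-dependent corrections $h(\bx)$, $\bh(\bx)$ of Appendix~\ref{a:a}, and the sign of the resulting form is exactly the contested issue (note that the symbol constant $C(n-1,\beta)$ in \eqref{eq:sy} carries $\Gamma(-\beta/2)$). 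Moreover, were such an identity true with a positive constant, the energy would be positive on all of $H^{\beta/2}(\Gamma)$, making the sign structure of $\mathcal K^{\beta/2}(\Gamma)$ irrelevant --- contrary to the point of the paper --- and the authors explicitly avoid the extension to $\mathbb R^n$ because the transmission conditions fail for $\alpha\neq0$.

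For comparison, the paper's proof of the left inequality in \eqref{eq:stp} never passes through an $\mathbb R^n$ Fourier representation. It splits the quadratic form with a cutoff at a scale $\delta$: the near-diagonal contributions are treated through the localized operators $V_{\ell r}$ of \eqref{eq:l1}--\eqref{eq:l2}, whose strong ellipticity gives, via Fourier transform and Parseval in each chart, a lower bound $c_0'\|\boldsymbol\sigma\|^2_{H^{\beta/2}(\Gamma)}-c''\tfrac{\delta^{2-\beta}}{2-\beta}\|\boldsymbol\sigma\|^2_{L_2(\Gamma)}$, the error being absorbed by choosing $\delta$ small; the remaining far-field quadratic form has a nonnegative smooth kernel, and it is here (and only here) that membership of $\boldsymbol\sigma$ in the cone is used to keep this contribution from destroying the bound; density of $\mathcal K^\infty(\Gamma)$ in $\mathcal K^{\beta/2}(\Gamma)$ then concludes. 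If you want to salvage your outline, you must either reproduce such a quantitative splitting or give an actual proof of strict positivity of the partie finie energy on the cone; as written, the decisive step is missing and the proposed substitute would fail.
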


\begin{proof}
Write
\[
\mathcal{K}^\infty(\Gam):=\Bigg\{\boldsymbol\sigma
= \sum\limits_{\ell\in I}\,\alpha_\ell \sigma^\ell,\
\text{where}\ \sigma^\ell\in C^\infty(\Gamma_\ell)\text{\ and\ }
\sigma^\ell\geq 0\text{\ on}\ \Gamma_\ell\Bigg\}.
\]
Let $\psi(\varrho)$ be a $C_0^\infty(\mathbb{R})$-function with
$\psi(\varrho)=1$ for $0\leq\varrho\leq\delta$, $0\leq\psi(\varrho)\leq 1$
and $\psi(\varrho)=0$ for $\varrho\geq 2\delta>0$. Having the atlas
$\mathfrak{A}_\ell^\delta$ at hand, we use the decomposition
\eqref{eq:l1} and the representation in local coordinates \eqref{eq:l2}
to arrive at
\begin{align*}
  (V_{\ell r}\boldsymbol\sigma)(\mathbf{x}^\prime)
  &=
  \pf \int\limits_{\mathbb{R}^{{n-1}}}|\mathbf{x}^\prime-\mathbf{y}^\prime|^{-\beta-(n-1)}
  \psi_\star(|\mathbf{x}^\prime-\mathbf{y}^\prime|)\boldsymbol\sigma_\star(\mathbf{y}^\prime) J_{\ell r}
  (\mathbf{y}^\prime)\, \d\mathbf{y}^\prime\\
  & \quad +
  \int\limits_{|\mathbf{x}^\prime-\mathbf{y}^\prime|\geq\delta} |\mathbf{x}^\prime-\mathbf{y}^\prime|^{-\beta-(n-1)}
  \big(1-\psi_\star(|\mathbf{x}^\prime-\mathbf{y}^\prime|)\big)\boldsymbol\sigma_\star(\mathbf{y}^\prime) J_{\ell r}
(\mathbf{y}^\prime)\, \d\mathbf{y}^\prime,
\end{align*}
where $\psi_\star$ and $\boldsymbol\sigma_\star$ are the pushforwards
of $\psi$ and $\boldsymbol\sigma$, respectively. Summation gives
\begin{align*}
 (\boldsymbol\sigma,V_\beta\boldsymbol\sigma)_{L_2(\Gamma)}
   &= \sum_{\ell,i\in I} \sum_{r\in R_\ell} \sum_{q\in R_i}
   \int\limits_{\mathbb{R}^{{n-1}}}\!\!\pf\! \!\int\limits_{\mathbb{R}^{{n-1}}}
  |\mathbf{x}^\prime-\mathbf{y}^\prime|^{-\beta-(n-1)}
   \psi_\star(|\mathbf{x}^\prime-\mathbf{y}^\prime|)\\
   &\hspace*{3cm}\cdot \boldsymbol\sigma_\star(\mathbf{y}^\prime)
   J_{\ell r}(\mathbf{y}^\prime)\d\mathbf{y}^\prime
   \boldsymbol\sigma_\star(\mathbf{x}^\prime) J_{iq}(\mathbf{x}^\prime)
   \d \mathbf{x}^\prime\\
   &\quad +
   \iint\limits_{|\mathbf{x}-\mathbf{y}|\geq\delta}
   |\mathbf{x}-\mathbf{y}|^{-\beta-(n-1)}
   \big(1-\psi(|\mathbf{x}-\mathbf{y}|)\big)\boldsymbol\sigma(\mathbf{y})\d s_{\bf y}
   \boldsymbol\sigma(\mathbf{x})\d s_{\bf x}.
\end{align*}

Since the localized operators $V_{\ell r}$ are all pseudodifferential
operators of order $\beta$ with positive definite principal symbol
\eqref{eq:sy}, hence strongly elliptic, one finds with Fourier transform
and Parseval's theorem the estimate
\begin{align*}
&\sum_{\ell,i\in I} \sum_{r\in R_\ell} \sum_{q\in R_i}\
\int\limits_{\mathbb{R}^{n-1}}\pf \int\limits_{\mathbb{R}^{n-1}}
|\mathbf{x}^\prime-\mathbf{y}^\prime|^{-\beta-(n-1)}
\psi_\star(|\mathbf{x}^\prime-\mathbf{y}^\prime|)\\
&\hspace*{5cm}\cdot\boldsymbol\sigma_\star(\mathbf{y}^\prime)
J_{\ell r}(\mathbf{y}^\prime)\d\mathbf{y}^\prime
\boldsymbol\sigma_\star(\mathbf{x}^\prime) J_{iq}(\mathbf{x}^\prime)\d \mathbf{x}^\prime \\
&\qquad\geq c_0^\prime\|\boldsymbol\sigma\|^2_{H^{\beta/2}(\Gamma)}
-c^{\prime\prime}\frac{1}{2-\beta}\delta^{2-\beta} \|\boldsymbol\sigma\|^2_{L_2(\Gamma)}
\geq c_0^{\prime\prime\prime}\|\boldsymbol\sigma\|^2_{H^{\beta/2}(\Gamma)}
\end{align*}
with $c_0^{\prime\prime\prime}>0$
since $H^{\beta/2}(\Gamma)\hookrightarrow L_2(\Gamma)$
compactly and if $\delta>0$ is chosen sufficiently small.
The remaining quadratic form
\[
\iint\limits_{|\mathbf{x}-\mathbf{y}|\geq\delta}\ |\mathbf{x}-\mathbf{y}|^{-\beta-(n-1)}
\big(1-\psi(|\mathbf{x}-\mathbf{y}|)\big) \boldsymbol\sigma(\mathbf{y})
\d s_{\bf y}\boldsymbol\sigma(\mathbf{x})\d s_{\bf x}
\]
has a strictly positive $C^\infty$-kernel. Hence the left inequality in
\eqref{eq:stp} is satisfied for $\boldsymbol\sigma\in\mathcal{K}^\infty(\Gam)$,
$\boldsymbol\sigma\not\equiv {\bf 0}$. Since $\mathcal{K}^\infty(\Gam)$
is dense in $\mathcal{K}^{\beta/2}(\Gam)$, the left inequality in \eqref{eq:stp}
also holds for $\boldsymbol\sigma\in\mathcal{K}^{\beta/2}(\Gam)$,
$\boldsymbol\sigma\not\equiv {\bf 0}$ by completion.
The right inequality in \eqref{eq:stp} was already shown in \eqref{eq:3.11r}
for~$\boldsymbol\sigma$ in place of~$\boldsymbol\varphi$.
This completes the proof.
\end{proof}

We next proceed by defining the notion of the Riesz energy for
arbitrary (not necessarily absolutely continuous) measures
$\boldsymbol{\Sigma}\in\mathfrak{M}^+(\Gam)$. Since $V_\beta$
is a classical pseudodifferential operator on $\Gamma$, it maps the
distribution given by the Radon measure $\bSi\in\mathfrak{M}^+(\Gamma)$
to $V_\beta\bSi$ which is a distribution again, and this linear mapping is
continuous in the weak topology of distributions (see Theorem II.1.5 in \cite{Tay}).
Therefore, the action of the measure $V_\beta\bSi$ on functions
$\bvfi\in C^\infty(\Gamma)$ is well defined and
\[
(V_\beta\bSi,\boldsymbol\varphi)_{L_2(\Gamma)}
=(\bSi,V_\beta\boldsymbol\varphi)_{L_2(\Gamma)},
\]
since $V_\beta$ is symmetric.

Let $\mathcal{E}_\alpha^+(\Gamma)$ consist of
all $\bSi\in \mathfrak{M}^+(\Gamma)$ which the property
\[
\sup_{\|\bvfi\|_{H^{\beta/2}(\Gamma)}\leq 1}
\big|(V_\beta\bSi,\bvfi)_{L_2(\Gamma)}\big|<\infty.
\]
Hence, for $\bSi\in\mathcal{E}_\alpha^+(\Gamma)$, we
can identify  $V_\beta\bSi$ with an associated element
$\bpsi \in H^{-\beta/2}(\Gamma)$ satisfying $\bpsi \d s=\d V_\beta\bSi$ and
\begin{equation*}\label{z-2}
\|\bpsi\|_{H^{-\beta/2}(\Gamma)}=\sup_{\|\bvfi\|_{H^{\beta/2}(\Gamma)}\le 1}
\big|(V_\beta\bSi,\bvfi)_{L_2(\Gamma)}\big|.
\end{equation*}
This leads us to the following theorem.

\begin{theorem}[see also {\cite[Theorem 3]{HWZ1}}]\label{th:3.3}
For any $\bSi\in\mathcal{E}_\alpha^+(\Gamma)$
there exists a unique element $\bsi\in\mathcal{K}^{\beta/2}(\Gamma)$
such that $\d\boldsymbol\Sigma=\boldsymbol\sigma\,\d s$
and
\begin{equation}\label{eq:2.s}
\bSi(\bvfi)=\intli_\Gam\bvfi\d\bSi=(\bvfi,\bsi)_{L_2(\Gam)}
\text{for all} \bvfi\in C^\8(\Gam).
\end{equation}
Moreover, $\mathcal{E}_\alpha^+(\Gamma)=\mathcal K^{\beta/2}(\Gamma)$,
and the Riesz energy\/ $E_\alpha(\boldsymbol\Sigma)$ of any
$\bSi\in\mathcal{E}_\alpha^+(\Gamma)$ is equivalent to the\/
$H^{\beta/2}(\Gamma)$-norm of the corresponding\/ $\boldsymbol
\sigma\in\mathcal K^{\beta/2}(\Gamma)$ in the sense of\/~{\rm(\ref{eq:stp})}.
\end{theorem}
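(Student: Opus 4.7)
The strategy is to read the defining inequality of $\mathcal{E}_\alpha^+(\Gamma)$ as saying that $V_\bet\bSi$ extends to a bounded functional on $H^{\bet/2}(\Gamma)$, hence coincides with some $\bpsi\in H^{-\bet/2}(\Gamma)$, and then to invoke the ellipticity of $V_\bet$ to transfer this regularity back onto $\bSi$ itself. Once $\bSi$ is shown to lie in $H^{\bet/2}(\Gamma)$, the prescribed sign structure on each component $\Gamma_\ell$ forces the resulting $L^2$-density to belong to the cone $\mathcal{K}^{\bet/2}(\Gamma)$, and the remaining assertions then follow directly from Theorem \ref{th:x}.

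In more detail, I would proceed as follows. Uniqueness of the density $\bsi$ is immediate: testing \eqref{eq:2.s} against $\bvfi\in C^\infty(\Gamma)$ forces any two candidates in $L^2(\Gamma)$ to agree almost everywhere. For existence, the Riesz representation theorem identifies $V_\bet\bSi$ with a unique $\bpsi\in H^{-\bet/2}(\Gamma)$ satisfying $(V_\bet\bSi,\bvfi)_{L_2(\Gamma)}=(\bpsi,\bvfi)_{L_2(\Gamma)}$ for all $\bvfi\in C^\infty(\Gamma)$. Since $\Gamma$ is compact, the Radon measure $\bSi$ is, through the Sobolev embedding $H^{s}(\Gamma)\hookrightarrow C(\Gamma)$ for $s>(n-1)/2$, a distribution lying in $H^{-s}(\Gamma)$ for every such $s$. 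By Theorem \ref{th:2.1}, $V_\bet$ is a classical, strongly elliptic pseudodifferential operator of order $\bet\in(0,2)$ on $\Gamma$ and hence admits a parametrix $P$ of order $-\bet$ with $PV_\bet=I+K$, where $K$ is smoothing of order $-\infty$. Applying $P$ to $V_\bet\bSi=\bpsi$ yields
\[
\bSi = P\bpsi - K\bSi \in H^{\bet/2}(\Gamma),
\]
since $P\dopu H^{-\bet/2}(\Gamma)\to H^{\bet/2}(\Gamma)$ and $K\bSi\in C^\infty(\Gamma)$. Thus $\bSi$ is absolutely continuous with a density $\bsi\in H^{\bet/2}(\Gamma)\subset L^2(\Gamma)$ satisfying \eqref{eq:2.s}. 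The sign condition $\bSi|_{\Gamma_\ell}=\al_\ell\mu^\ell$ with $\mu^\ell\ge 0$ from \eqref{eq:herzherz} then shows that $\si^\ell:=\al_\ell\bsi|_{\Gamma_\ell}\ge 0$ a.e. on $\Gamma_\ell$ and lies in $H^{\bet/2}(\Gamma_\ell)$, so that $\bsi=\sum_{\ell\in I}\al_\ell\si^\ell\in\mathcal{K}^{\bet/2}(\Gamma)$.

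The reverse inclusion $\mathcal{K}^{\bet/2}(\Gamma)\subseteq\mathcal{E}_\alpha^+(\Gamma)$ together with the energy equivalence \eqref{eq:stp} is immediate from Theorem \ref{th:x}. I expect the main technical obstacle to be the elliptic regularity step: one must bootstrap from the very modest a priori regularity of a Radon measure to full $H^{\bet/2}$-regularity of $\bSi$ without being able to simply invert $V_\bet$, since the G{\aa}rding inequality \eqref{eq:2.2} admits in principle a finite-dimensional kernel on the full space $H^{\bet/2}(\Gamma)$. The parametrix construction circumvents this by working modulo smoothing operators, and it is essential that $V_\bet$ be \emph{classical} so that such a parametrix is available; this is what justifies the invocation of Theorem~\ref{th:2.1} in the form stated.
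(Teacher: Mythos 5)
Your argument is correct, but for the crucial inclusion $\mathcal{E}_\alpha^+(\Gamma)\subseteq\mathcal K^{\beta/2}(\Gamma)$ it follows a genuinely different route than the paper. The paper, after the same identification $V_\bet\bSi=\bpsi\in H^{-\bet/2}(\Gam)$, applies the Fischer--Riesz representation theorem to the bounded functional $\boldsymbol\zeta\mapsto\bSi(\boldsymbol\zeta)$ with $\boldsymbol\zeta=V_\bet\bvfi$, obtaining $\bsi_0\in H^{\bet/2}(\Gam)$ with $\bSi(\boldsymbol\zeta)=(\bsi_0,\boldsymbol\zeta)_{L_2(\Gam)}$, and then passes from test functions of the form $\boldsymbol\zeta=V_\bet\bvfi$ to arbitrary ones, which implicitly uses that $V_\bet$ maps $C^\8(\Gam)$ onto (a dense subspace of) $C^\8(\Gam)$; you instead run a standard elliptic-regularity bootstrap, writing $\bSi=P\bpsi-K\bSi$ with a parametrix $P$ of the classical elliptic operator $V_\bet$ and $K$ smoothing, which gives $\bSi\in H^{\bet/2}(\Gam)$ directly. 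Your version sidesteps the surjectivity/invertibility question entirely (as you note, the G{\aa}rding inequality \eqref{eq:2.2} only gives invertibility modulo a possible finite-dimensional kernel), at the price of invoking the parametrix calculus, while the paper's argument is shorter and stays within Hilbert-space duality; both then conclude membership in the cone from the sign structure \eqref{eq:herzherz} and handle the converse inclusion identically. One small attribution point: the reverse inclusion $\mathcal K^{\beta/2}(\Gamma)\subseteq\mathcal{E}_\alpha^+(\Gamma)$ does not follow from the quadratic-form bounds \eqref{eq:stp} of Theorem \ref{th:x} alone, since $\mathcal K^{\beta/2}(\Gamma)$ is only a cone and the test functions $\bvfi$ range over all of $H^{\bet/2}(\Gam)$; what is needed is the continuity $V_\bet\dopu H^{\bet/2}(\Gam)\to H^{-\bet/2}(\Gam)$ from Theorem \ref{th:2.1} (which is exactly how the paper argues), so you should cite that theorem there rather than Theorem \ref{th:x} — a citation adjustment, not a gap.
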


\begin{proof}
Choose an arbitrary $\bSi\in\mathcal{E}_\alpha^+(\Gamma)$.
As has been observed just above, $V_\beta\bSi\in H^{-\beta/2}(\Gamma)$
is a linear functional on $C^\infty(\Gamma)$, and it is bounded on
$H^{\beta/2}(\Gamma)$ because of
\[
|V_\bet\bSi(\bvfi)|=\bigl|(V_\beta\bSi,\boldsymbol\varphi)_{L_2(\Gamma)})\bigr|\le
c\|V_\beta\bSi\|_{H^{-\beta/2}(\Gamma)}\,\|\bvfi\|_{H^{\bet/2}(\Gam)}.
\]
Since $C^\8(\Gam)$ is dense in
$H^{\bet/2}(\Gam)$, according to the Fischer--Riesz lemma
on the representation of bounded linear functionals there
exists a unique $\bsi_0\in H^{\bet/2}(\Gam)$ with
\[
\intli_\Gam V_\bet\bvfi\d\bSi=
\intli_\Gam\boldsymbol\zeta \d\bSi =\bSi (\boldsymbol\zeta)
=\intli_\Gam\bsi_0\boldsymbol\zeta\d s=(\bsi_0,\boldsymbol\zeta)_{L_2(\Gam)},
\]
where $\boldsymbol\zeta:=V_\bet\bvfi$. If $\bvfi$ traces $C^\8(\Gam)$,
so does $\boldsymbol\zeta$; hence, \eqref{eq:2.s} holds for
$\boldsymbol\sigma=\boldsymbol\sigma_0\in H^{\beta/2}(\Gamma)$
by replacing $\boldsymbol\zeta$ by $\bvfi$. Since $\bSi\in\mathfrak{M}^+
(\Gamma)$, we actually have $\bsi_0\in\mathcal{K}^{\beta/2}(\Gamma)$,
and the inclusion $\mathcal{E}_\alpha^+(\Gamma)\subseteq
\mathcal{K}^{\beta/2}(\Gamma)$ follows.

Now, let $\boldsymbol\sigma\in\mathcal{K}^{\beta/2}(\Gamma)$. Then
$\boldsymbol\psi=V_\beta\boldsymbol\sigma\in H^{-\beta/2}(\Gamma)$ and
\begin{align*}
\|\boldsymbol\psi\|_{H^{-\beta/2}(\Gamma)}&=\|V_\beta
\boldsymbol\sigma\|_{H^{-\beta/2}(\Gamma)}\\{}&=
\sup_{\|\boldsymbol\varphi\|_{H^{\beta/2}(\Gamma)}\leqslant1}\,\big|
(V_\beta\boldsymbol\sigma,\boldsymbol\varphi)_{L_2(\Gamma)}\big|
\leqslant c\|\boldsymbol\sigma\|_{H^{\beta/2}(\Gamma)}<\infty
\end{align*}
due to the duality $H^{-\beta/2}(\Gamma)\times H^{\beta/2}(\Gamma)$
of $(\cdot,\cdot)_{L_2(\Gamma)}$, $V_\beta$ being a pseudodifferential
operator on~$\Gamma$. Hence, $\mathcal{K}^{\beta/2}(\Gamma)
\subseteq\mathcal E^+_\alpha(\Gamma)$, which completes the proof.
\end{proof}

Although we have shown that the distributions in $\mathcal{K}^{\bet/2}(\Gam)$
all have finite Riesz energy $E_\alpha(\bmu)$, it is not clear yet whether there
are no other measures in $\mathfrak{M}^+(\Gamma)$ whose Riesz energy is
finite. To elaborate on this problem, we employ an idea by J.~Deny \cite{De}.
A measure on $\Gamma$ can be considered as a distribution on $\Gamma$
and, hence, can be Fourier transformed. In connection with the localization
of $V_\beta$ on one chart of the atlas on $\Gamma$, we have relation
\eqref{eq:3.8A} where the pseudodifferential operator $V_{\ell r}$ is
defined via Fourier transform.

If $\bSi\in\mathfrak{M}^+(\Gamma)$ is given, then it becomes via
the pushforward $\cX_{\ell r\star}$ the localized distribution $\bSi_{\ell r}
:= \cX_{\ell r\star}\beta_{\ell r}\boldsymbol\Sigma$ with compact support
in $\mathcal{U}_{\ell r}\subset \mathbb{R}^{n-1}$ (see e.g.\ \cite[Lemma 5]
{HWZ1}), which can be Fourier transformed to $\widehat{\bSi}_{\ell r} (\bxi^\prime)$
on $\mathbb{R}^{n-1}$. The measures in $\mathfrak{M}^+(\Gamma)$
for which
\begin{equation}\label{hryu}
 \intli_{\mathbb{R}^{n-1}} |\bxi^\prime|^\beta
 |\widehat{\bSi}_{\ell r} (\bxi^\prime)|^2\d\bxi^\prime<\infty
\end{equation}
are precisely all those having finite Riesz energy (cf.~(\ref{eq:3.8A});
observe that the first summand on the left-hand side of~(\ref{eq:3.8A}) is the
dominant one). Let $\mathcal{E}_\alpha^\star(\Gamma)$ consist of all
$\bSi\in\mathfrak{M}^+(\Gamma)$ satisfying~(\ref{hryu}).

\begin{theorem}\label{th:3.4} There holds
\[
\mathcal{E}_\alpha^\star(\Gamma)=\mathcal{K}^{\beta/2}(\Gamma).
\]
\end{theorem}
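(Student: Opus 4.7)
The assertion is the equivalence of two descriptions of the finite-energy cone: the pseudodifferential one of Theorem~\ref{th:3.3} (localized via $V_\beta$ acting on measures) and the Fourier one given by~(\ref{hryu}). Given the identification (\ref{eq:3.8A}) of the principal part of $V_{\ell r}$ with the multiplier $|\bxi'|^\beta$, the natural strategy is to prove the two inclusions chart-by-chart and then glue by the partition of unity $\{\beta_{\ell r}\}$.

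\textbf{The easy direction $\mathcal K^{\beta/2}(\Gamma)\subseteq\mathcal E_\alpha^\star(\Gamma)$.} If $\bsi\in\mathcal K^{\beta/2}(\Gamma)$, then for each chart $(\ell,r)$ the localized density $\cX_{\ell r\star}(\beta_{\ell r}\bsi)$ lies in $H^{\beta/2}(\mathbb R^{n-1})$, since $\beta_{\ell r}\in C_0^\infty(\Gamma_\ell)$ is a multiplier on $H^{\beta/2}(\Gamma_\ell)$ and $\cX_{\ell r}$ is a $C^\infty$-diffeomorphism. Plancherel's identity with the Bessel weight $(1+|\bxi'|^2)^{\beta/2}$ then yields~(\ref{hryu}).

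\textbf{The harder direction $\mathcal E_\alpha^\star(\Gamma)\subseteq\mathcal K^{\beta/2}(\Gamma)$.} Take $\bSi=\sum_{\ell\in I}\alpha_\ell\mu^\ell\in\mathcal E_\alpha^\star(\Gamma)$. For each chart $(\ell,r)$, the localization $\bSi_{\ell r}=\cX_{\ell r\star}(\beta_{\ell r}\bSi)$ is, up to the sign $\alpha_\ell$, a finite positive Borel measure with compact support in $\cU_{\ell r}\subset\mathbb R^{n-1}$. Its Fourier transform $\widehat{\bSi}_{\ell r}$ is therefore a continuous function on $\mathbb R^{n-1}$ satisfying the uniform bound
\[
|\widehat{\bSi}_{\ell r}(\bxi')|\le|\bSi_{\ell r}|(\cU_{\ell r})<\infty.
\]
Combining this global bound on a neighborhood of $\bxi'=\bnull$ with the high-frequency integrability~(\ref{hryu}) yields
\[
\int_{\mathbb R^{n-1}}(1+|\bxi'|^2)^{\beta/2}|\widehat{\bSi}_{\ell r}(\bxi')|^2\,\d\bxi'<\infty,
\]
so that $\bSi_{\ell r}\in H^{\beta/2}(\mathbb R^{n-1})$. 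Since $\beta>0$, a fortiori $\bSi_{\ell r}\in L^2(\mathbb R^{n-1})$, so the distribution $\bSi_{\ell r}$ is represented by an $L^2$-function; the \emph{singular} part of the measure $\bSi_{\ell r}$ must therefore vanish, and $\bSi_{\ell r}$ is absolutely continuous with an $H^{\beta/2}$-density. Pulling back by $\cX_{\ell r}^\star$, summing over $(\ell,r)$ using the partition-of-unity identity $\sum_{r\in R_\ell}\beta_{\ell r}\equiv 1$ on $\Gamma_\ell$, and noting that the Jacobians $J_{\ell r}$ are smooth and positive, we conclude that $\bSi$ is absolutely continuous on $\Gamma$ with a density $\bsi\in H^{\beta/2}(\Gamma)$. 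The component-wise sign structure $\bSi=\sum_\ell\alpha_\ell\mu^\ell$ with $\mu^\ell\ge 0$ carries over to $\bsi=\sum_\ell\alpha_\ell\sigma^\ell$ with $\sigma^\ell\ge 0$, which is the definition of $\mathcal K^{\beta/2}(\Gamma)$.

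\textbf{Main obstacle.} The delicate point is the \emph{automatic} upgrade from a pure finiteness condition on a Fourier-side weighted $L^2$-norm of a positive measure to the absolute continuity of that measure with an $H^{\beta/2}$-density. This step relies crucially on two facts: first, that $\bSi$ is a finite positive (up to component-wise signs) Radon measure, so that $\widehat{\bSi}_{\ell r}$ is a bounded continuous function and the low-frequency tail is harmless; and second, that $\beta>0$, so that~(\ref{hryu}) already forces $L^2$-integrability of $\widehat{\bSi}_{\ell r}$ and hence membership in $L^2(\mathbb R^{n-1})$ by Plancherel. Without positivity, one could not bound $\widehat{\bSi}_{\ell r}$ near the origin by the total variation and the argument would fail. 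Independence of the atlas and partition of unity follows from the invariance of $H^{\beta/2}$-regularity under $C^\infty$-diffeomorphisms and $C_0^\infty$-multipliers.
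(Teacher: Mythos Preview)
Your proof is correct, and it takes a genuinely different route from the paper's.

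For the inclusion $\mathcal E_\alpha^\star(\Gamma)\subseteq\mathcal K^{\beta/2}(\Gamma)$, the paper does \emph{not} argue directly that $\bSi_{\ell r}\in H^{\beta/2}(\mathbb R^{n-1})$. Instead it uses the elementary inequality $|\bxi'|^\beta(1+|\bxi'|^2)^{-\beta/2}\le 1$ together with~(\ref{hryu}) to conclude that $V_{\ell r}\bSi_{\ell r}\in H^{-\beta/2}(\cU_{\ell r})$ (reading $|\bxi'|^\beta$ as the principal symbol of $V_{\ell r}$), hence $V_\beta\bSi\in H^{-\beta/2}(\Gamma)$ after pullback, and then invokes Theorem~\ref{th:3.3} ($\mathcal E_\alpha^+(\Gamma)=\mathcal K^{\beta/2}(\Gamma)$) to finish. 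Your argument bypasses both $V_\beta$ and Theorem~\ref{th:3.3}: you exploit that a compactly supported signed measure has a \emph{bounded} Fourier transform, so the low-frequency part of the Bessel integral is trivially finite, while~(\ref{hryu}) handles the high frequencies; this places $\bSi_{\ell r}$ directly in $H^{\beta/2}(\mathbb R^{n-1})\hookrightarrow L^2$, forcing absolute continuity. For the easy inclusion the paper again works through~(\ref{eq:3.8A}), bounding the Fourier-side integral by $(V_{\ell r}\bSi_{\ell r},\bSi_{\ell r})$ plus lower-order pseudodifferential terms, whereas you simply observe $|\bxi'|^\beta\le(1+|\bxi'|^2)^{\beta/2}$.

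What each approach buys: the paper's argument stays entirely inside the pseudodifferential framework it has built and recycles Theorem~\ref{th:3.3}, so it is economical given what precedes it. Your argument is more elementary and self-contained, and it makes transparent precisely where the sign structure of $\mathfrak M^+(\Gamma)$ enters --- namely to bound $\widehat{\bSi}_{\ell r}$ near the origin via the total variation, the point you correctly flag as the main obstacle.
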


\begin{proof}
Let $\boldsymbol\Sigma=\boldsymbol\sigma\in\mathcal{K}^{\beta/2}(\Gamma)$.
Then, by Theorem~\ref{th:x},
\[
(V_\beta\bsi,\bsi)_{L_2(\Gamma)}
=E_\alpha(\boldsymbol\sigma)<\infty.
\]
With \eqref{eq:3.8A} and Parseval's identity, together with
$\boldsymbol\Sigma_{\ell r}\in H^{\beta/2}(\mathcal{U}_{\ell r})$,
we obtain further
\begin{align*}
  &\intli_{\mathbb{R}^{n-1}} |\bxi^\prime|^\beta |\widehat{\bSi}_{\ell r} (\bxi^{\prime})|^2 \d\bxi^\prime\\
  	&\quad \le \big|(V_{\ell r}\boldsymbol\Sigma_{\ell r},\boldsymbol\Sigma_{\ell r})_{L^2(\mathcal{U}_{\ell r})}\big|
	+ \big|(A_{\ell r}^{\beta-2}\boldsymbol\Sigma_{\ell r},\boldsymbol\Sigma_{\ell r})_{L^2(\mathcal{U}_{\ell r})}\big|
	+ \big|(R_{\ell r}\boldsymbol\Sigma_{\ell r},\boldsymbol\Sigma_{\ell r})_{L^2(\mathcal{U}_{\ell r})}\big|\\
   &\quad \le c\|\boldsymbol\Sigma_{\ell r}\|_{H^{\beta/2}(\mathcal{U}_{\ell r})}^2
   + c'\|\boldsymbol\Sigma_{\ell r}\|_{H^{\beta/2+2-\beta}(\mathcal{U}_{\ell r})}
   \|\boldsymbol\Sigma_{\ell r}\|_{H^{\beta/2}(\mathcal{U}_{\ell r})}\\
   &\quad \le c''\|\boldsymbol\Sigma_{\ell r}\|_{H^{\beta/2}(\mathcal{U}_{\ell r})}^2 < \infty
\end{align*}
since $2-\beta/2 \ge \beta/2$. Consequently, it holds
$\boldsymbol\sigma=\bSi\in\mathcal{E}_\alpha^\star(\Gamma)$ and thus
$\mathcal{K}^{\beta/2}(\Gamma)\subseteq\mathcal E^\star_\alpha(\Gamma)$.

Next, suppose $\bSi\in\mathcal{E}_\alpha^\star(\Gamma)$; then by localization
inequality \eqref{hryu} holds. Since $  |\bxi^\prime|^\beta |(1+|\bxi^\prime|^2)^{-\beta/2}
\leq 1$ we find
\begin{align*}
    \|V_{\ell r}\bSi_{\ell r}\|_{H^{-\beta/2}(\mathcal{U}_{\ell r})}^2
        & =
    \intli_{\mathbb{R}^{n-1}} |\bxi^\prime|^{2\beta}(1+|\bxi^\prime|^2)^{-\beta/2}
    |\widehat{\bSi}_{\ell r}(\bxi^\prime)|^2\d \bxi^\prime \\
    & \leq
     \intli_{\mathbb{R}^{n-1}} |\bxi^\prime|^{\beta} |\widehat{\bSi}_{\ell r}(\bxi^\prime)|^2
     \d\bxi^\prime<\infty.
\end{align*}
With pullback to $\Gamma$ this implies that
\[
  \|V_\beta\bSi\|_{H^{-\beta/2}(\Gamma)}<\infty.
\]
Application of Theorem~\ref{th:3.3} then gives $\bSi\in
\mathcal{E}^+_\alpha(\Gamma)=\mathcal{K}^{\beta/2}(\Gamma)$,
which in view of the arbitrary choice of $\bSi\in\mathcal{E}_\alpha^\star(\Gamma)$
finally yields $\mathcal{K}^{\beta/2}(\Gamma)=\mathcal{E}_\alpha^\star(\Gamma)$.
\end{proof}

\begin{theorem}[see also {\cite[Theorem 4]{HWZ1}}]\label{t:2.3}
Let $\bSi\in\mathfrak{M}^+(\Gam)$ with $E_\alpha(\bSi) <\infty$.
Then there exists a sequence of absolutely continuous measures
$\bSi_k\in\mathcal{K}^{\beta/2}(\Gam)$, where $\d\bSi_k= \sum_{i\in I}
\alpha_i\varphi_k^i\,\d s$ with $\varphi_k^i\in C(\Gam_i)\cap H^{\beta/2}(\Gam_i)$
and $\varphi_k^i({\bf x})\geqslant 0$ for ${\bf x}\in\Gamma_i$, such that
$\{\bSi_k\}_{k\in\N}$ converges weakly and strongly in the Hilbert
space $H^{\beta/2}(\Gam)$ to $\bSi$, i.e.,
\[
\bSi_k(\bvfi)\overset{k\to\8}{\longrightarrow}\bSi(\bvfi)
\text{for all}\bvfi\in C(\Gam)\text{and}
\lim\limits_{k\to\8}\|\bSi-\bSi_k\|_{H^{\beta/2}(\Gam)}=0.
\]
\end{theorem}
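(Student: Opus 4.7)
The plan is to combine the structural identification $\mathcal{E}_\alpha^+(\Gamma)=\mathcal{K}^{\beta/2}(\Gamma)$ provided by Theorems~\ref{th:3.3} and~\ref{th:3.4} with a standard chartwise mollification that preserves nonnegativity. Since $\bSi\in\mathfrak{M}^+(\Gam)$ and $E_\alpha(\bSi)<\infty$, these theorems place $\bSi$ in $\mathcal{K}^{\beta/2}(\Gamma)$, so
\[
\d\bSi=\bsi\,\d s,\qquad \bsi=\sum_{\ell\in I}\alpha_\ell\sigma^\ell,\qquad \sigma^\ell\in H^{\beta/2}(\Gamma_\ell),\quad \sigma^\ell\geq 0.
\]
The task thus reduces to approximating each nonnegative density $\sigma^\ell$ by a sequence $\varphi_k^\ell\in C(\Gamma_\ell)\cap H^{\beta/2}(\Gamma_\ell)$ with $\varphi_k^\ell\geq 0$ and $\|\varphi_k^\ell-\sigma^\ell\|_{H^{\beta/2}(\Gamma_\ell)}\to 0$; setting $\d\bSi_k:=\sum_\ell\alpha_\ell\varphi_k^\ell\,\d s$ then yields the desired sequence, with $\|\bSi-\bSi_k\|_{H^{\beta/2}(\Gam)}\to 0$ following by summation over $\ell$.

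For the approximation I would use the familiar partition-of-unity/mollifier recipe. Picking the atlas $\mathfrak A_\ell$ and partition of unity $\{\beta_{\ell r}\}_{r\in R_\ell}$ from the proof of Theorem~\ref{th:2.1}, decompose $\sigma^\ell=\sum_r\beta_{\ell r}\sigma^\ell$. Each summand $\beta_{\ell r}\sigma^\ell$ is nonnegative with compact support in $O_{\ell r}$, so its pushforward $f_{\ell r}:=\cX_{\ell r\star}(\beta_{\ell r}\sigma^\ell)$ lies in $H^{\beta/2}(\R^{n-1})$, is compactly supported in $\cU_{\ell r}$, and satisfies $f_{\ell r}\geq 0$. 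Convolving with a standard nonnegative mollifier $\rho_{\vep_k}\in C_0^\infty(\R^{n-1})$ whose support shrinks to $\{\bnull\}$ gives smooth, nonnegative approximants $f_{\ell r}\ast\rho_{\vep_k}$ that remain compactly supported in $\cU_{\ell r}$ for $\vep_k$ small enough and converge to $f_{\ell r}$ in $H^{\beta/2}(\R^{n-1})$. Pulling back and summing,
\[
\varphi_k^\ell:=\sum_{r\in R_\ell}\cX_{\ell r}^\star(f_{\ell r}\ast\rho_{\vep_k}),
\]
produces a nonnegative $C^\infty$-function on $\Gamma_\ell$, and $\varphi_k^\ell\to\sigma^\ell$ in $H^{\beta/2}(\Gamma_\ell)$ by the continuity of the pullback (a standard consequence of the invariance of Sobolev norms under $C^\infty$-diffeomorphisms on manifolds).

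Strong convergence of $\bSi_k$ to $\bSi$ in $H^{\beta/2}(\Gam)$ then immediately yields the weak Hilbert-space convergence. For the measure-theoretic weak convergence against $C(\Gam)$ test functions, writing $\d\bSi_k=\bsi_k\,\d s$ with $\bsi_k=\sum_\ell\alpha_\ell\varphi_k^\ell$ and exploiting the continuous embedding $H^{\beta/2}(\Gam)\hookrightarrow L_2(\Gam)\hookrightarrow L_1(\Gam)$ (valid since $\beta/2>0$ and $\Gam$ is compact), one has
\[
|\bSi_k(\bvfi)-\bSi(\bvfi)|=\Big|\intli_\Gam\bvfi(\bsi_k-\bsi)\,\d s\Big|\leq\|\bvfi\|_{L_\infty(\Gam)}\|\bsi_k-\bsi\|_{L_1(\Gam)}\to 0.
\]
The only delicate point is maintaining the sign constraint throughout the construction. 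This is precisely why the plan first invokes Theorems~\ref{th:3.3} and~\ref{th:3.4} to isolate the signed decomposition $\bsi=\sum_\ell\alpha_\ell\sigma^\ell$, and then mollifies each nonnegative piece $\sigma^\ell$ separately against a nonnegative partition of unity and with a nonnegative kernel; this keeps the approximants inside the cone $\mathcal K^{\beta/2}(\Gam)$. Mollifying $\bsi$ directly, without first extracting the signed pieces, would not give us the required control on the sign of $\varphi_k^i$ on each $\Gam_i$.
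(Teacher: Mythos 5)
Your argument is correct, and it reaches the same reduction as the paper — both proofs begin by invoking Theorem~\ref{th:3.3} to identify $\bSi$ with a density $\bsi=\sum_{i\in I}\alpha_i\sigma^i\in\mathcal K^{\beta/2}(\Gam)$, so that everything hinges on approximating each nonnegative $\sigma^i$ in $H^{\beta/2}(\Gam_i)$ by continuous nonnegative functions — but the approximation step itself is genuinely different. The paper approximates $\bsi$ by arbitrary smooth $\bsi_k\in C^\infty(\Gam)$ (density of $C^\infty(\Gam)$ in $H^{\beta/2}(\Gam)$) and then projects back onto the cone by taking the positive parts $\overline{\sigma}_k^i=\max\{0,\sigma_k^i\}$, which requires the stability estimate $\|\min\{0,\sigma_k^i\}\|_{H^{\beta/2}(\Gam_i)}\le\|\sigma^i-\sigma_k^i\|_{H^{\beta/2}(\Gam_i)}$ for the truncation near the nonnegative limit; this superposition-type inequality in a fractional Sobolev space is the one nontrivial analytic ingredient of that route. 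You instead build cone-preserving approximants directly: chartwise localization with a nonnegative partition of unity, pushforward, convolution with a nonnegative mollifier, and pullback. This avoids any positive-part operation in $H^{\beta/2}$ (where Lipschitz truncations are a delicate matter), keeps the sign constraint automatic, and even yields $C^\infty$ approximants rather than merely piecewise smooth ones; the price is that you must redo the localization/mollification bookkeeping (compact supports staying inside the charts for small mollification parameter, convergence of $\rho_{\vep_k}\ast f_{\ell r}$ in $H^{\beta/2}(\R^{n-1})$, and norm equivalence under the $C^\infty$-charts), all of which are standard and which you state correctly. Your handling of the measure-theoretic weak convergence via $H^{\beta/2}(\Gam)\hookrightarrow L_2(\Gam)\hookrightarrow L_1(\Gam)$ is also fine and slightly more explicit than the paper, which leaves that part implicit.
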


\begin{proof}
For $\bSi$ we have $\bSi=\bsi\in\mathcal{K}^{\bet/2}(\Gam)$ due to
Theorem \ref{th:3.3}. Hence, since $C^\8(\Gam)\sbs H^{\bet/2}(\Gam)$ densely,
there exists a sequence $\bsi_k=\sum_{i\in I} \alpha_i\si_k^i\in C^\8(\Gam)$
with $\|\bsi-\bsi_k\|_{H^{\bet/2}(\Gam)}<\frac{1}{k}$ for all $k\in\mathbb{N}$.
We define $\bSi_k\in\fM^+(\Gam)$ by
\[
\d\bSi_k=\overline{\bsi}_k \d s\quad\text{with}\quad
\overline{\bsi}_k = \sum_{i\in I} \alpha_i\overline{\sigma}_k^i,
\]
where $\overline{\sigma}_k^i({\bf x}) :=\max\{0,\si_k^i({\bf x})\}$.
Then, $\overline{\sigma}_k^i\in C(\Gam_i)\cap H^{\beta/2}(\Gam_i)$
since $\overline{\si}_k^i$ is piecewise smooth and
\begin{align*}
\|\bsi-\overline{\bsi}_k\|_{H^{\beta/2}(\Gam)}
  &=\|\bsi-\bsi_k+\bsi_k-\overline{\bsi}_k\|_{H^{\beta/2}(\Gam)}\\
  &\le\|\bsi-\bsi_k\|_{H^{\beta/2}(\Gam)}
	+ \|\bsi_k-\overline{\bsi}_k\|_{H^{\beta/2}(\Gam)}\\
  &= \|\bsi-\bsi_k\|_{H^{\beta/2}(\Gam)}
	+ \sum_{i\in I}\|\underline{\sigma}_k^i\|_{H^{\beta/2}(\Gam_i)}.
\end{align*}
Herein, it holds $\underline{\sigma}_k^i({\bf x}) = \min\{0,\si_k^i({\bf x})\}
\le 0$ for all ${\bf x}\in\Gam_i$, particularly $\underline{\sigma}_k^i\in
C(\Gam_i)\cap H^{\beta/2}(\Gam_i)$. From $\sigma^i \ge 0$, it
immediately follows that
\[
  \|\underline{\sigma}_k^i\|_{H^{\beta/2}(\Gam_i)}
  \le\|\si^i-\si_k^i\|_{H^{\beta/2}(\Gam_i)}
\]
for all $i\in I$ and therefore
\[
  \|\bsi-\overline{\bsi}_k\|_{H^{\beta/2}(\Gam)}\le 2\|\bsi-\bsi_k\|_{H^{\beta/2}(\Gam)}
  \le \frac{2}{k}\overset{k\to\infty}\longrightarrow 0
\]
as desired.
\end{proof}

\section{The Gauss problem}\label{s:gauss}
\setcounter{equation}{0}
\setcounter{theorem}{0}
The Gauss variational problem is the problem of minimizing the Riesz
energy for particularly signed Borel measures on the given $(n-1)$-dimensional
manifold $\Gam\sbs\R^n$, in the presence of an external field.
Let
$\bg$ be a given continuous, positive function on $\Gam$ and
$\ba=(a_i)_{i\in I}$ a given vector with $a_i>0$, $i\in I$. Then, the
set of \emph{admissible charges\/} for the Gauss problem is defined as
\[
\cE_\al(\Gam,\ba,{\bf g}):= \Bigg\{\bmu\in\mathcal{K}^{\beta/2}(\Gam)\dopu \intli_{\Gamma_i}
g_i\d\mu^i=a_i\text{for all} i\in I\Bigg\}
\]
where we set $g_i := \bg_{|\Gam_i}$. Note that  the set
$\cE_\al(\Gam,\ba,{\bf g})$ is  convex and weakly and strongly
closed in $\mathcal{K}^{\beta/2}(\Gam)$.

The \emph{Gauss minimal energy problem\/} reads as follows (see \cite{Oh} and \cite{la}):
To given $\ba\in\R^{|I|}_+$, $\bof\in C(\Gam)$ and $\bg\in C(\Gam)$ such
that $\bg>{\bf 0}$, find the Borel measure $\bmu_0\in \cE_\al(\Gam,\ba,\bg)$
which is the minimizer of
\begin{equation}\label{eq:2kreuz}
\inf\limits_{\bmu\in\cE_\al(\Gam,\ba,\bg)}\bbG_{\bof}(\bmu)
=\bbG_{\bof}(\bmu_0)=:\bbG_{\bof}(\Gam,\ba,\bg)
\end{equation}
where the Gauss functional is given by
\[
\bbG_{\bf f}(\bmu):=E_\alpha(\bmu)-2\intli_\Gam \bof\d\bmu.
\]
Since $\bbG_{\bof}(\bmu)$ is on $\cE_\al(\Gamma,\ba,\bg)$ strictly
convex and weakly and strongly continuous, the Gauss problem
admits a unique solution $\bmu_0\in\cE_\al(\Gam,\ba,\bg)$.

Based on Theorem~\ref{th:x}, the minimization problem
\eqref{eq:2kreuz} can also be formulated as a variational
problem in $H^{\bet/2}(\Gam)$. Namely,  minimize the functional
\begin{equation}\label{eq:2.71x}
\V_{\bof}(\bvfi):=
(\bvfi,V_\bet\bvfi)_{L_2(\Gam)}-2(\bof,\bvfi)_{L_2(\Gam)},\quad
\bvfi\in H^{\bet/2}(\Gam),
\end{equation}
over the affine cone
\begin{align*}
\cK(\Gam,\ba,\bg)
&:=
\Bigg\{
\bvfi=\sumli_{i\in I}\al_i \vfi^i\text{where}
\vfi^i\in H^{\bet/2}(\Gam_i),\\
&\qquad
\vfi^i\ge 0\text{and} \intli_{\Gam_i}
g_i\vfi^i\d s=a_i>0\text{for all} i\in I\Bigg\}
\subset\mathcal{K}^{\beta/2}(\Gam)\subset H^{\bet/2}(\Gam)
\end{align*}
where $\mathbf{f}\in C(\Gamma)\komma\bg> 0$, $\bg\in C(\Gam)$
and ${\bf a}\in \mathbb{R}^{|I|}_+$ are given. This minimization
problem will be called the \emph{dual Gauss problem\/}.

\begin{theorem}\label{th:2.4}
To the unique solution $\bmu_0\in\cE_\al(\Gam,\ba,\bg)$ of the
Gauss problem \eqref{eq:2kreuz}, there corresponds a unique element
$\bvfi_0\in\cK(\Gam,\ba,\bg)\sbs H^{\bet/2}(\Gam)$ with the properties
\[
\bmu_0(\bvfi)=(\bvfi_0,\bvfi)_{L_2(\Gam)}\text{for all}\bvfi\in C^\8(\Gam)
\]
and
\[
\V_{\bof}(\bvfi_0)=\bbG_{\bof}(\bmu_0)=\bbG_{\bof}(\Gam,\ba,\bg).
\]
The element $\bvfi_0$ is the minimizer of the functional $\V_{\bof}$
over $\cK(\Gam,\ba,\bg)$, i.e.,
\begin{equation}\label{eq:2.*}
\V_{\bof}(\bvfi_0)=\min\limits_{\bvfi\in\cK(\Gam,\ba,\bg)}
\V_{\bof}(\bvfi)=:\V_{\bof}(\Gam,\ba,\bg).
\end{equation}
\end{theorem}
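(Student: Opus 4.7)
The plan is to transfer the already-solved Gauss problem~\eqref{eq:2kreuz} into the $H^{\bet/2}$-setting via the density--measure isomorphism furnished by Theorem~\ref{th:3.3}, and then read off~\eqref{eq:2.*} from the equality of the two functionals under that isomorphism.

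First, I observe that $\cE_\al(\Gam,\ba,\bg)\sbs\cK^{\bet/2}(\Gam)=\mathcal{E}_\al^+(\Gam)$, so by Theorem~\ref{th:3.3} every admissible measure $\bmu=\sum_{i\in I}\al_i\mu^i$ is absolutely continuous, $d\bmu=\bvfi\,ds$, with a uniquely determined density $\bvfi=\sum_{i\in I}\al_i\vfi^i\in\cK^{\bet/2}(\Gam)$, $\vfi^i\ge 0$. The moment constraints $\intli_{\Gam_i}g_i\,d\mu^i=a_i$ translate verbatim into $\intli_{\Gam_i}g_i\vfi^i\,ds=a_i$, so $\bmu\leftrightarrow\bvfi$ is a bijection between $\cE_\al(\Gam,\ba,\bg)$ and $\cK(\Gam,\ba,\bg)$. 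Next, using the definition of $E_\al$ at the end of Section~\ref{s:manifold} together with Theorem~\ref{th:x}, I compute under this identification
\[
\bbG_{\bof}(\bmu)=E_\al(\bmu)-2\intli_\Gam \bof\,d\bmu=(V_\bet\bvfi,\bvfi)_{L_2(\Gam)}-2(\bof,\bvfi)_{L_2(\Gam)}=\V_{\bof}(\bvfi).
\]
Applied to the unique Gauss minimizer $\bmu_0$, this yields a unique $\bvfi_0\in\cK(\Gam,\ba,\bg)$ with $\bmu_0(\bvfi)=(\bvfi_0,\bvfi)_{L_2(\Gam)}$ for all $\bvfi\in C^\8(\Gam)$ and $\V_{\bof}(\bvfi_0)=\bbG_{\bof}(\bmu_0)=\bbG_{\bof}(\Gam,\ba,\bg)$.

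Because the bijection above is value-preserving, the infima of $\bbG_{\bof}$ over $\cE_\al(\Gam,\ba,\bg)$ and of $\V_{\bof}$ over $\cK(\Gam,\ba,\bg)$ coincide and minimizers correspond to minimizers, which proves~\eqref{eq:2.*}. For a self-contained $H^{\bet/2}$-argument for uniqueness I would alternatively invoke Theorem~\ref{th:x}: it gives coerciveness of $(V_\bet\cdot,\cdot)_{L_2(\Gam)}$ on the cone $\cK^{\bet/2}(\Gam)$, so $\V_{\bof}$ is strictly convex and weakly lower semicontinuous, while $\cK(\Gam,\ba,\bg)$ is a convex and weakly closed affine subset of $H^{\bet/2}(\Gam)$ (weak closedness follows because the constraint maps $\vfi^i\mapsto\intli_{\Gam_i}g_i\vfi^i\,ds$ are continuous linear functionals on $H^{\bet/2}(\Gam_i)$, $g_i\in C(\Gam_i)\sbs L_2(\Gam_i)$).

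The substantive work is concentrated in Theorems~\ref{th:x} and~\ref{th:3.3}: the identification $\mathcal{E}_\al^+(\Gam)=\cK^{\bet/2}(\Gam)$ together with the energy--norm equivalence~\eqref{eq:stp}. Once these are in hand, the present statement reduces to bookkeeping, and the only mild points to verify are nonemptiness of $\cK(\Gam,\ba,\bg)$ (immediate from $a_i,g_i>0$) and the fact that no ``extra'' minimizer of $\V_{\bof}$ in $\cK(\Gam,\ba,\bg)$ can fail to correspond to an element of $\cE_\al(\Gam,\ba,\bg)$, which is precisely what the bijection of the first step rules out.
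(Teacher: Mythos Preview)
Your proposal is correct and follows essentially the same route as the paper: use Theorems~\ref{th:x} and~\ref{th:3.3} to set up a value-preserving bijection $\bmu\leftrightarrow\bsi_\mu$ between $\cE_\al(\Gam,\ba,\bg)$ and $\cK(\Gam,\ba,\bg)$, then transfer the unique minimizer. The paper makes explicit one small point you pass over, namely that the constraint identity $(\sigma_\mu^i,g_i)_{L_2(\Gam_i)}=\mu^i(g_i)=a_i$ requires extending~\eqref{eq:2.s} from $C^\infty(\Gam)$ to $g_i\in C(\Gam_i)$ by density of $C^\infty(\Gam)$ in $C(\Gam)$; otherwise the arguments coincide.
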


\begin{proof}
By Theorems \ref{th:x} and \ref{th:3.3}, to any Borel
measure $\bmu={\sum}_{i\in I}\al_i\mu^i\in\cE_\al(\Gam,\ba,\bg)$,
there corresponds a unique element $\bsi_\mu=\sum_{i\in I}\al_i
\si_\mu^i\in\mathcal K^{\bet/2}(\Gam)$ satisfying both \eqref{eq:stp}
and \eqref{eq:2.s}. Moreover, since $C^\infty(\Gamma)$ is dense in
$C(\Gamma)$, from \eqref{eq:2.s} we get
\[
(\si_\mu^i,g_i)_{L_2(\Gam_i)}=
\mu^i(g_i)= a_i\text{\ for all\ }i\in I.
\]
Hence, $\bsi_\mu\in\cK(\Gam,\ba,\bg)$.

Applying \eqref{eq:stp}, for these $\bmu$ and $\bsi_\mu$
we also obtain
\begin{equation}\label{eq:2.**}
\V_{\bof}(\bsi_\mu)= (V_\bet\bsi_\mu,\bsi_\mu)_{L^2(\Gam)}
-2(\bsi_\mu,\bof)_{L^2(\Gam)}=E_\al(\bmu)-2\bmu(\bof)
=\bbG_{\bof}(\bmu).
\end{equation}
Thus, the correspondence $\bmu\mapsto\bsi_\mu$
between $\cE_\al(\Gam,\ba,\bg)$ and $\cK(\Gam,\ba,\bg)$ is
one-to-one and satisfies~\eqref{eq:2.**}, which immediately implies
\[
\bbG_{\bof}(\Gam,\ba,\bg)=\V_{\bof}(\Gam,\ba,\bg).
\]

If now $\bmu_0$ is the (unique) solution of the  Gauss problem
\eqref{eq:2kreuz}, then $\bvfi_0$, the image of $\bmu_0$ under
this correspondence, is the unique solution of the minimizing
problem \eqref{eq:2.*}, and vice versa.
\end{proof}

\section{The particular case $\bal=\bnull$}\label{s:null}
\setcounter{equation}{0}
\setcounter{theorem}{0}
In the following, we will focus on the particular situation
$\alpha = 0$ from the potential theoretic point of view.
The double layer energy $E_0$ of a function $\bvfi\in
H^{\beta/2}(\Gamma)$, $\bet=1-\alpha=1$, which is
harmonic in $\Om$ (see \cite[Equation (1.2.17)]{H-W}) is
given by
\[
E_0(\bvfi):= (\bD\bvfi,\bvfi)_{L_2(\Gam)}
\]
with the hypersingular integral operator  $\bD$:
\begin{align*}
\bD\bvfi(\bx)
&:=
\pf \intli_{\Gam\sm\{\bx\}} k_{\bf D}(\bx,\by)\bvfi(\by)\d s_\by,\\
k_{\bf D}(\bx,\by)=k_{\bf D}(\by,\bx)
&\,=
c_n\left\{\f{\bn(\bx)\cd\bn(\by)}{|\bx-\by|^n}+n
\f{(\by-\bx)\cd\bn(\by)(\bx-\by)\cd\bn(\bx)}{|\bx-\by|^{n+2}}\right\}.
\end{align*}
The Hadamard partie finie integral operator is given by the finite
part with respect to $0<\de\to 0$ of
\begin{align*}
&\intli_{\by\in\Gam\wedge|\bx-\by|>\de}k_{\bf D}(\bx,\by)\bvfi(\by)\d s_\by\\
&\qquad=
\intli_{\by\in\Gam\wedge|\bx-\by|>\de}k_{\bf D}(\bx,\by)\big\{\bvfi(\by)-\bvfi(\bx)\big\}\d s_\by
+\intli_{\by\in\Gam\wedge|\bx-\by|>\de}k_{\bf D}(\bx,\by)\d s_\by\bvfi(\bx)
\end{align*}
if $\bvfi\in C^\8(\Gam)$.
The limit
\[
\lim_{\de\to 0}\!\!\!\intli_{0<\de<|\bx-\by|}
k_{\bf D}(\bx,\by)\big\{\bvfi(\by)-\bvfi(\bx)\big\}\d s_\by
=\pv\!\!\!\intli_{\by\in\Gam\sm\{\bx\}}k_{\bf D}(\bx,\by)\big\{\bvfi(\by)-\bvfi(\bx)\big\}\d s_\by
\]
exists (as a Cauchy principal value integral), whereas, from
$\int_{|\bx-\by|>\de}k_{\bf D}(\bx,\by)\d s_\by$, we have to
take the \emph{finite part\/}
\[
\lim_{\de\to 0}\pf \intli_{|\bx-\by|>\de}k_{\bf D}(\bx,\by)\d s_\by=:h(\bx).
\]
(For the evaluation of $h(\bx)$, see \eqref{eq:A.2}.) Hence,
we finally arrive at
\[
\bD\bvfi(\bx)=\pv\intli_{\Gam\sm\{\bx\}}k_{\bf D}(\bx,\by)\big\{\bvfi(\by)-\bvfi(\bx)\big\}\d s_\by
+h(\bx)\bvfi(\bx).
\]

\section{A perturbed minimal Riesz energy problem}
\label{s:mini}
Instead of considering the continuous minimal Riesz energy problem,
D.P.\ Hardin and E.B.\ Saff investigate in \cite{h-s2} the discrete Riesz
energy problem of minimizing $\overset{\circ}{E}_s(\boldsymbol{\omega}_N)$,
$s>d$, of the sum of a finite number $N$ of Dirac measures $\delta_{{\bf x}_i,N}$,
$\boldsymbol{\omega}_N:=\{{\bf x}_1,\ldots,{\bf x}_N\}$ being a set on a $d$-rec\-tifiable
manifold $A$. The energy $\overset{\circ}{E}_s(\boldsymbol{\omega}_N)$ is defined by
removing the self-interactions. For the sake of simplicity, assume that $A$ is compact
and has the positive Hausdorff measure $\mathcal{H}_d(A)>0$. Then, the infimum of
${\overset{\circ}{E}_s}(\boldsymbol{\omega}_N)$ over all point sets $\boldsymbol{\omega}_N
\subset A$ is attained at some $\boldsymbol{\omega}^\star_N:=\{{\bf x}^\star_1,
\ldots,{\bf x}^\star_N\}$. In particular, it is shown in \cite[Theorem 2.4]{h-s2} that
\[
\lim_{N\to\infty}\,\overset{\circ}{E}_s(\boldsymbol{\omega}_N^\star) N^{-(1+s/d)} =
C_{sd}/\mathcal{H}_d(A)^{s/d},
\]
where $C_{sd}$ is a constant independent of $A$ and defined
explicitly by the unit cube. Furthermore, in the weak-star topology of measures it holds
\[\frac{1}{N}\sum_{i=1}^N \delta_{{\bf x}_i,N}
  \to\frac{\mathcal{H}_d(\cdot)|_A}{\mathcal{H}_d(A)}
  \quad\text{as $N\to\infty$}.
\]
If $A$ is a bi-Lipschitz image of a single compact set in $\mathbb{R}^d$,
then the separation estimate \cite[Eq.~(16)]{h-s2} has also been established for an optimal
$N$ point $s$-energy configuration $\boldsymbol{\omega}^\star_N$ for $A$.

Now, for our $(n-1)$-dimensional manifold $\Gam=\bigcup_{i\in I}\Gamma_i$,
every compact smooth $\Gamma_i$ immersed into $\mathbb{R}^n$,
satisfies all the assumptions on $A$ with $s=n-\alpha$, $-1<\alpha<1$,
$d'=n>d=n-1$. Following the inspiration of these results, in our continuous
setting, this corresponds to integrating for a small $\delta>0$ over
$(\Gam\x\Gam)\sm \{|\bx-\by|\le\de\}$, i.e., by
cutting out a set with $|\bx-\by|\le\de$ near the singularity. In order to explain
the computations in the next section, we shall focus first on the following
perturbation problem which, for $0<\vep = \vep(\delta)\to 0$, is essentially
the minimization problem we will finally get.

\begin{theorem}\label{th:4.1}
For $\vep>0$ sufficiently small, consider
the minimization problem:
\begin{equation}\label{eq:4.1}
\big(V_\bet\bsi,\bsi\big)_{L_2(\Gam)}
+\frac{1}{\vep}(\bsi,\bsi\big)_{L_2(\Gam)}
-2(\bof,\bsi)_{L_2(\Gam)}
\to \min
\end{equation}
subject to
\begin{equation}\label{eq:4.2}
\intli_{\Gam_i} g_i\si^i\d s= a^i,\ j\in I.
\end{equation}
Let the given data satisfy the additional conditions:
\begin{equation}\label{eq:4kreuz}
\bof\in H^{\bet/2}(\Gam),\
\bg\in H^{\f{3}{2}\bet}(\Gam)
\text{and}0<a^i\in\R,\ i\in I.
\end{equation}
Then, the minimizer $\bsi_\vep^\star\in L_2(\Gam)$ admits the asymptotic
expansion
\begin{equation}\label{eq:4.3}
\bsi^\star_\vep
=
\bsi_0+\vep\bsi_1+\vep^2\bsi_2
\text{satisfying}\para\bsi_j\para_{\cH_\vep}\le c,\ j=0,1,2
\end{equation}
with a constant $c>0$ independent of $\vep$, and where
\[
\bcH_\vep:=\big\{\bvfi\in H^{\bet/2}(\Gam)\text{with}
\para\bvfi\para_{\cH_\vep}^2:=
(\vep V_\bet\bvfi,\bvfi)_{L_2(\Gam)}+\|\bvfi\|^2_{L_2(\Gam)}\big\}
\sbs H^{\bet/2}(\Gam).
\]
In particular, with $f_k:=\bof_{|\Gam_k}$, it holds
\begin{align*}
\bsi_0
&=
(\al_k\si_0^k)_{k\in I} =
\Big(\al_k g_k a^k (g_k,g_k)^{-1}_{L_2(\Gam_k)}\Big)_{k\in I},\\
\bsi_1
&=
(\al_k\si_1^k)_{k\in I}=
\Big(\al_k g_k(g_k,g_k)^{-1}_{L_2(\Gam_k)}(g_k,V_\bet\bsi_0-f_k)_{L_2(\Gam_k)}
\Big)_{k\in I}-V_\bet\bsi_0+{\bf f}.
\end{align*}
\end{theorem}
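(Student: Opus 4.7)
The strategy is to first obtain existence and uniqueness of the minimizer by convex variational arguments, then derive the Euler--Lagrange system via Lagrange multipliers, match orders of $\vep$ in a formal ansatz to identify $\bsi_0$ and $\bsi_1$, and finally estimate the remainder $\bsi_2$ in the $\bcH_\vep$--norm uniformly in~$\vep$. For existence, by the left inequality in~\eqref{eq:2.2} combined with the $\vep^{-1}\|\cdot\|_{L_2(\Gam)}^2$--term, the functional in~\eqref{eq:4.1} is strictly convex and coercive on $H^{\bet/2}(\Gam)$ with respect to the $\bcH_\vep$--norm, provided $\vep$ is so small that $\vep c_1<1$. Since~\eqref{eq:4.2} cuts out a closed affine subspace of $H^{\bet/2}(\Gam)$ that contains the explicit $\bsi_0$ from the statement, a unique minimizer $\bsi_\vep^\star\in H^{\bet/2}(\Gam)$ is produced.

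Introducing Lagrange multipliers $\lam_i\in\R$ for the constraints~\eqref{eq:4.2} and writing $\sigma^i=\al_i\bsi|_{\Gam_i}$, the first variation yields
\[
V_\bet\bsi+\f{1}{\vep}\bsi-\bof=\sumli_{i\in I}\lam_i g_i\chi_{\Gam_i}\text{on}\Gam.
\]
Testing this identity against $g_i$ on $\Gam_i$ and invoking the constraint eliminates $\lam_i$, so that the Euler--Lagrange system becomes the componentwise fixed-point relation
\[
\bsi|_{\Gam_i}=\f{\al_i a^i g_i}{(g_i,g_i)_{L_2(\Gam_i)}}+\vep\,\biggl[\f{(V_\bet\bsi-\bof,g_i)_{L_2(\Gam_i)}}{(g_i,g_i)_{L_2(\Gam_i)}}\,g_i-(V_\bet\bsi)|_{\Gam_i}+f_i\biggr],\quad i\in I.
\]
Inserting the ansatz $\bsi_\vep^\star=\bsi_0+\vep\bsi_1+\vep^2\bsi_2$ and matching the $\vep^0$-- and $\vep^1$--coefficients on the right-hand side reproduces the closed-form expressions for $\bsi_0$ and $\bsi_1$ displayed in~\eqref{eq:4.3}.

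For the remainder, set $\bsi_2:=\vep^{-2}(\bsi_\vep^\star-\bsi_0-\vep\bsi_1)$. Substituting back into the fixed-point identity and using the linearity of $V_\bet$ shows that $\bsi_2$ solves a linear equation of the form $(I+\vep\wt L)\bsi_2=\wt L\bsi_1$ for a bounded linear operator $\wt L$. The regularity hypotheses~\eqref{eq:4kreuz}, combined with the mapping property $V_\bet\dopu H^s(\Gam)\to H^{s-\bet}(\Gam)$ of Theorem~\ref{th:2.1} applied with $s=3\bet/2$ to $\bg$ and with $s=\bet/2$ to $\bof$, ensure that $\bsi_0,\bsi_1\in H^{\bet/2}(\Gam)$ with norms independent of~$\vep$. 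The elementary estimate $\para\bvfi\para_{\bcH_\vep}^2\le\|\bvfi\|_{L_2(\Gam)}^2+\vep c_2\|\bvfi\|_{H^{\bet/2}(\Gam)}^2$ then gives $\para\bsi_j\para_{\bcH_\vep}\le c$ for $j=0,1$, while the uniform $L_2$--boundedness of $(I+\vep V_\bet)^{-1}$ (immediate at the symbol level from $\vep|\bxi|^\bet+1\ge 1$) produces the corresponding bound for $\bsi_2$.

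The main obstacle is precisely the uniform-in-$\vep$ control of $\bsi_2$ in the $\bcH_\vep$--norm, since the operator $\vep V_\bet+I$ degenerates as $\vep\to 0$ and a careful singular-perturbation estimate is required. The regularity level $\bg\in H^{3\bet/2}(\Gam)$ in~\eqref{eq:4kreuz} is sharp: it is exactly what makes $V_\bet\bsi_0$ and $V_\bet\bsi_1$ lie in $H^{\bet/2}(\Gam)$, so that the Neumann--type iteration for $\bsi_2$ closes after two substitutions; with less regularity one would obtain only a shorter asymptotic expansion.
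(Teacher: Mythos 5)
Your overall route coincides with the paper's: convexity for existence, the Lagrange--multiplier system, elimination of the multipliers through the constraints, matching of the $\vep^0$- and $\vep^1$-terms to get $\bsi_0$, $\bsi_1$, and an exact remainder equation of the form $(I+\vep\wt L)\bsi_2=\wt L\bsi_1$ (your $\wt L$ is, up to the rank-one terms proportional to $g_k$, just $-V_\bet$; compare \eqref{eq:4.8c}). The genuine gap is in the one step you yourself identify as the main obstacle: the uniform bound $\para\bsi_2\para_{\cH_\vep}\le c$. Your justification -- ``uniform $L_2$-boundedness of $(I+\vep V_\bet)^{-1}$, immediate at the symbol level from $\vep|\bxi|^\bet+1\ge1$'' -- does not deliver it, for three reasons. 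First, the datum $\wt L\bsi_1$ contains $V_\bet\bsi_1$, and since $\bsi_1$ is only in $H^{\bet/2}(\Gam)$ (it contains $V_\bet\bsi_0$ with $\bsi_0\in H^{3\bet/2}(\Gam)$), this datum lies only in $H^{-\bet/2}(\Gam)$, not in $L_2(\Gam)$; an $L_2$-resolvent bound cannot even be applied to it. Second, the assertion to be proved is a bound in the $\bcH_\vep$-norm, which contains the term $\vep(V_\bet\bsi_2,\bsi_2)_{L_2(\Gam)}$; an $L_2$-bound on $\bsi_2$ does not control this. Third, $V_\bet$ is not nonnegative on $L_2(\Gam)$ -- it only satisfies the G{\aa}rding inequality \eqref{eq:2.2} -- so the pointwise symbol inequality is a heuristic, not a proof, even of the $L_2$ statement (for that one uses \eqref{eq:2.2} and Lax--Milgram with $\vep c_1<1$, as you do for coercivity). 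The paper closes this step differently: it treats $\vep V_\bet+I$ as a uniform isomorphism $\bcH_\vep\to\bcH_\vep\p$ (Lax--Milgram via \eqref{eq:2.2}), estimates the right-hand side of \eqref{eq:4.8c} in the dual norm using $\para\cdot\para_{\cH_\vep\p}\le\|\cdot\|_{L_2(\Gam)}$ for the terms proportional to $g_k$, moves $V_\bet$ onto $g_k$ by symmetry (this is where $\bg\in H^{\bet}(\Gam)$ enters) so that the $\bsi_2$-dependent contribution carries a factor $\vep$ and can be absorbed for $0<\vep<\vep_0$, and controls the $V_\bet\bsi_1$-part through $\|\bsi_1\|_{H^{\bet/2}(\Gam)}$. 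Some argument of this singular-perturbation type is indispensable; without it your proposal does not establish \eqref{eq:4.3}.

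A smaller inaccuracy in your closing remark: $\bg\in H^{3\bet/2}(\Gam)$ does \emph{not} make $V_\bet\bsi_1$ lie in $H^{\bet/2}(\Gam)$ -- that would require $\bsi_1\in H^{3\bet/2}(\Gam)$, i.e.\ roughly $\bg\in H^{5\bet/2}(\Gam)$. Its actual role is to give $\bsi_0\in H^{3\bet/2}(\Gam)$, hence $V_\bet\bsi_0\in H^{\bet/2}(\Gam)$ and $\bsi_1\in H^{\bet/2}(\Gam)$, while the weaker consequence $\bg\in H^{\bet}(\Gam)$ is what is used in the duality step for the remainder just described.
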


\begin{proof}
The quadratic form in \eqref{eq:4.1} induces for $\vep>0$ the
$\vep$-dependent family of Hilbert spaces $\bcH_\vep$. Let
us denote by $\bcH_\vep\p$ the dual space to $\bcH_\vep$
whose norm is then defined by
\[
\para\bof\para_{\cH\p_\vep}:= \sup_{\bw\in H^{\bet/2}(\Gam)\setminus\{0\}}
\f{(\bof,\bw)_{L_2(\Gam)}}{\para\bw\para_{\cH_\vep}},
\]
satisfying the estimate
\begin{equation}\label{eq:3herz}
\para\bof\para_{\cH\p_\vep}\le\sup_{\bw\in H^{\bet/2}(\Gam)\setminus\{0\}}
\f{(\bof,\bw)_{L_2(\Gam)}}{\|\bw\|_{L_2(\Gam)}}=\|\bof\|_{L_2(\Gam)}
\end{equation}
since $H^{\bet/2}(\Gam)\hookrightarrow L_2(\Gam)$ densely and the unit
ball in $\bcH_\vep$ is contained in the ball $\|\bw\|_{L_2(\Gam)}\le 1$.

The problem \eqref{eq:4.1} can also be written as to minimize
\[
\bJ_\vep(\bsi):=\ha\para\bsi\para_{\bcH_\vep}^2-\vep(\bof,\bsi)_{L_2(\Gam)}
\]
subject to \eqref{eq:4.2}. It possesses
the Lagrangian
\[
\bL_\lam (\bsi):=
\ha\para\bsi\para^2_{\cH_\vep}-\vep(\bof,\bsi)_{L_2(\Gam)}
+\sumli_{j\in I} \al_j \lam_j\big(a^j-(g_j,\si_j)_{L_2(\Gam_j)}\big),
\]
where $\bsi=\sumli_{j\in I}\al_j\si^j$ and $\al_j\si^j\d s=\bSi_{|\Gam_j}$.
Thus, the necessary conditions at the minimum read as
\[
\pa_{\si^k} \bL_\lam(\bsi_\vep)= \vep\al_k V_\bet\si_\vep^k +
\al_k \si_\vep^k - \vep \al_k f_k-\al_k\lam_k g_k=0,
\]
or
\begin{equation}\label{eq:4.4}
\vep V_\bet\si_\vep^k +\si_\vep^k-\vep f_k=\lam_k g_k\text{and}
(g_k,\si_\vep^k)_{L_2(\Gam_k)}=a^k,\ k\in I.
\end{equation}
Here, $\blam\neq\bnull$ since the constraints \eqref{eq:4kreuz} are
always active as it follows from \eqref{eq:3herz} and will also be
seen below.

For $\bsi$ and $\blam$, we insert the expansion
\eqref{eq:4.3} into \eqref{eq:4.4} and obtain the system
\begin{gather*}
\vep V_\bet\si_0^k+\si_0^k-\vep f_k
+
\vep^2 V_\bet \si_1^k+\vep\si_1^k+\vep^3 V_\bet
\si_2^k+\vep^2\si_2^k
=
(\lam_0^k+\vep \lam_1^k+\vep^2\lam_2^k) g_k\text{on}\Gam_k,\\
(g_k,\si_0^k)_{L_2(\Gam_k)}
+
\vep(g_k,\si_1^k)_{L_2(\Gam_k)}+\vep^2 (g_k,\si_2^k)_{L_2(\Gam_k)}
=
a^k,\ k\in I.
\end{gather*}
Equating equal order terms in $\vep$ yields with $\lam_0^j>0$:

\bigskip
\noindent
\underline{Order $\bvep^0$:} We find
\[
\si_0^j=g_j\lam_0^j\text{and}(g_j,\si_0^j)_{L_2(\Gam_j)}=
(g_j,g_j)_{L_2(\Gam_j)}\lam_0^j =a^j>0.
\]
Thus, it follows
\begin{equation}\label{eq:4.5}
\lam_0^j=(g_j,g_j)_{L_2(\Gam_j)}^{-1}a^j\text{and}
\si_0^k=g_k a^k (g_k,g_k)_{L_2(\Gam_k)}^{-1}
\in H^{\f{3}{2}\bet}(\Gam_k).
\end{equation}
The assumptions \eqref{eq:4kreuz} imply the  properties
$\si_0^k>0$. Moreover,
\[
\para\si_0^k\para_{\cH_\vep}\le
c_1\|\si_0^k\|_{H^{\f{3}{2}\bet}(\Gam_k)}
\le c.
\]

\bigskip
\noindent
\underline{Order $\bvep^1$:} It holds
\[
-f_k+\si_1^k
=
\lam_1^k g_k-V_\bet \si_0^k
\]
and
\[
(V_\bet\si_0^k,g_k)_{L_2(\Gam_k)}
-(f_k,g_k)_{L_2(\Gam_k)}
+ (\si_1^k,g_k)_{L_2(\Gam_k)}
=
\lam_1^k(g_k,g_k)_{L_2(\Gam_k)}.
\]
This yields with $(\si_1^k,g_k)_{L_2(\Gam_k)}=0$:
\begin{equation}\label{eq:4.8a}
\begin{array}{c}
\lam_1^k
=
(g_k,g_k)^{-1}_{L_2(\Gam_k)}(V_\bet\si_0^k-f_k,g_k)_{L_2(\Gam_k)},\\[2ex]
\si_1^k
=
g_k(g_k,g_k)^{-1}_{L_2(\Gam_k)}(V_\bet\si_0^k-f_k,g_k)_{L_2(\Gam_k)}+f_k
-V_\bet \si_0^k\in H^{\bet/2}(\Gam_k).\end{array}
\end{equation}
Hence,
\[
\para\si_1^k\para_{\cH_\vep}\le c_2\|\si_0^k\|_{H^{\bet/2}(\Gam_k)}\le
c.
\]

\bigskip
\noindent
\underline{Order $\bvep^2$:} We derive the identities
\begin{equation}\label{eq:4.8b}
\begin{array}{c}
V_\bet \si_1^k+(\vep V_\bet\si_2^k+\si_2^k)
=
\lam_2^k g_k,\\[2ex]
\vep(V_\bet\si_2^k,g_k)_{L_2(\Gam_k)}+(V_\bet
\si_1^k,g_k)_{L_2(\Gam_k)}
=
\lam_2^k (g_k,g_k)_{L_2(\Gam_k)}
\end{array}
\end{equation}
since $(\si_2^k,g_k)_{L_2(\Gam_k)}=0$.
Therefore, we conclude
\begin{equation}\label{eq:4.8c}
\begin{aligned}
&(\vep V_\bet+I)_{|\Gam_k}\si_2^k=
g_k(g_k,g_k)^{-1}_{L_2(\Gam_k)}\\
&\qquad\cdot\big\{\vep(V_\bet\si_2^k,g_k)_{L_2(\Gam_k)}
+(V_\bet\si_1^k,g_k)_{L_2(\Gam_k)}\big\}-V_\bet\si_1^k\in H^{-\bet/2}(\Gam_k).
\end{aligned}
\end{equation}

\bigskip
For every fixed $\vep>0$ sufficiently small, the mapping
$
\vep V_\bet+I\dopu \bcH_\vep\to\bcH_\vep\p
$
defines an isomorphism due to \eqref{eq:2.2}
and the Lax--Milgram lemma. Therefore, \eqref{eq:4.8c}
amounts to the estimate
\begin{align*}
\para\bsi_2\para_{\bcH_\vep}
&\le
c\,\bigpara
\big(g_k(g_k,g_k)^{-1}_{L_2(\Gam_k)}\big\{\vep(V_\bet\si_2^k,g_k)_{L_2(\Gam_k)}
+(V_\bet\si_1^k,g_k)_{L_2(\Gam_k)}\big\}\big)_{k\in I}
-V_\bet\bsi_1\bigpara_{\bcH_\vep\p}\\
&\le
c\p\vep\bigg(\sumli_{k\in I}(V_\bet\si_2^k,g_k)_{L_2(\Gam_k)}\bigg)
+c\p\|V_\bet\bsi_1\|_{H^{-\bet/2}(\Gam)}
\end{align*}
with a constant $c\p$ depending on $\Gam$ and $\bg$
due to \eqref{eq:3herz} but \emph{not\/} on $\vep$.
With $\bg\in H^\bet(\Gam)$ and $V_\bet$ being a
pseudodifferential operator of order $\bet$ on $\Gam$,
we further have
\[
\bigg|\sumli_{k\in I}(V_\bet \si_2^k,g_k)_{L_2(\Gam_k)}\bigg|
\le c\pp\|\bsi_2\|_{L_2(\Gam)}\|\bg\|_{H^\bet(\Gam)},
\]
implying that
\begin{align*}
\para\bsi_2\para_{\bcH_\vep}
&\le
 c^{\prime\prime\prime}\vep \|\bsi_2\|_{L_2(\Gam)}
+c_\bv^{\prime}\|\bsi_1\|_{H^{\bet/2}(\Gam)}\\
&\le
 c^{\prime\prime\prime}\vep \|\bsi_2\|_{\bcH_\vep}
+c_\bv^{\prime}\|\bsi_1\|_{H^{\bet/2}(\Gam)}.
\end{align*}
Consequently, since the constants do not depend on $\vep$,
there exists an $\vep_0>0$ such that
\[
\para\bsi_2\para_{\cH_\vep}\le
c_\bv\|\bsi_1\|_{H^{\bet/2}(\Gam)} \text{for all} 0<\vep<\vep_0
\]
and $c_\bv$ independent of $\vep$.

With $\bof\in H^{\bet/2}(\Gam)$, $\bg\in H^{\f{3}{2}\bet}(\Gam)$,
we find $\bsi_0\in H^{\f{3}{2}\bet}(\Gam)$ and $\bsi_1\in H^{\bet/2}
(\Gam)$. Hence, \eqref{eq:4.3} is justified which completes the proof
of Theorem \ref{th:4.1}.
\end{proof}

With the help of the previous theorem, we immediately
find the following asymptotic behaviour of the minimizer
$\bsi_\vep^\star$ if $\vep$ tends to zero.

\begin{cor}\label{th:4.2}
Under the assumptions in Theorem \ref{th:4.1},
we find that
\[
\|\bsi_\vep^\star-\bsi_0\|_{L_2(\Gam)}\le c\vep
\overset{\vep\to0}{\longrightarrow} 0
\]
with some constant $c$, independent of $\bsi_0$ and
$\vep>0$, where $\bsi_\vep^\star$ is the minimizer
\eqref{eq:4.3} of \eqref{eq:4.1} for $\vep>0$.
\end{cor}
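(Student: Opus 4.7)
The plan is to derive the bound directly from the asymptotic expansion~\eqref{eq:4.3} supplied by Theorem~\ref{th:4.1}. The central observation is that the $\vep$-dependent norm $\para\cdot\para_{\bcH_\vep}$ dominates the $L_2$-norm uniformly in $\vep$: from the definition
\[
\para\bvfi\para_{\bcH_\vep}^2=(\vep V_\bet\bvfi,\bvfi)_{L_2(\Gam)}+\|\bvfi\|_{L_2(\Gam)}^2
\]
and the left inequality in~\eqref{eq:2.2}, the first summand is nonnegative up to an absorbable $L_2$-term for $\vep$ small, so one gets $\|\bvfi\|_{L_2(\Gam)}\le\para\bvfi\para_{\bcH_\vep}$ (up to a harmless constant).

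Using~\eqref{eq:4.3}, one writes $\bsi_\vep^\star-\bsi_0=\vep\bsi_1+\vep^2\bsi_2$ and applies the triangle inequality in $L_2(\Gam)$:
\[
\|\bsi_\vep^\star-\bsi_0\|_{L_2(\Gam)}\le\vep\,\|\bsi_1\|_{L_2(\Gam)}+\vep^2\,\|\bsi_2\|_{L_2(\Gam)}\le\vep\,\para\bsi_1\para_{\bcH_\vep}+\vep^2\,\para\bsi_2\para_{\bcH_\vep}.
\]
Since by Theorem~\ref{th:4.1} both $\para\bsi_1\para_{\bcH_\vep}$ and $\para\bsi_2\para_{\bcH_\vep}$ are bounded by a constant independent of $\vep$, one concludes that $\|\bsi_\vep^\star-\bsi_0\|_{L_2(\Gam)}\le c\vep+O(\vep^2)\le c\vep$ for $\vep>0$ sufficiently small.

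There is no serious obstacle: the corollary is essentially a reading-off of the leading-order term of the expansion in the weakest norm available. The only point worth flagging is the comparison between $\|\cdot\|_{L_2(\Gam)}$ and $\para\cdot\para_{\bcH_\vep}$, which is immediate from the definition of $\bcH_\vep$ and the nonnegativity (modulo a lower-order $L_2$-perturbation) of $(V_\bet\cdot,\cdot)_{L_2(\Gam)}$ guaranteed by Theorem~\ref{th:2.1}.
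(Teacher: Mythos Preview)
Your proposal is correct and matches the paper's own argument almost verbatim: the paper also writes $\bsi_\vep^\star-\bsi_0=\vep\bsi_1+\vep^2\bsi_2$, passes from the $L_2(\Gam)$-norm to the $\bcH_\vep$-norm, and invokes the uniform bounds $\para\bsi_j\para_{\bcH_\vep}\le c$ from Theorem~\ref{th:4.1} to conclude $\|\bsi_\vep^\star-\bsi_0\|_{L_2(\Gam)}\le c'(\vep+\vep^2)\le 2c'\vep$. Your extra remark on why $\|\cdot\|_{L_2(\Gam)}$ is controlled by $\para\cdot\para_{\bcH_\vep}$ (via the G{\aa}rding inequality~\eqref{eq:2.2}, absorbing the $c_1$-term for $\vep$ small) is a welcome clarification that the paper leaves implicit.
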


\begin{proof}
Since $\bsi_\vep^\star=\bsi_0+\vep\bsi_1+\vep^2\bsi_2$,
with \eqref{eq:4.3} we find
\[
\|\bsi_\vep^\star-\bsi_0\|_{L_2(\Gam)}\le
\|\bsi_\vep^\star-\bsi_0\|_{\mathcal{H}_\vep}\le
c\p(\vep+\vep^2)\le 2c\p\vep
\]
as proposed.
\end{proof}

\section{Riesz minimal energy without finite part reduction}
\label{s:comp}
We consider next the punched hypersingular Riesz potential
which is defined by integrating for a small $\delta > 0$ only over
$(\Gam\x\Gam)\sm \{|\bx-\by|\le\de\}$, i.e., by cutting out a set
with $|\bx-\by|\le\de$ near the singularity. Thus, the corresponding
Riesz energy is defined as
\[
\J_\de(\bmu)=
\iint\limits_{\Gam\x\Gam\wedge 0<\de\le|\bx-\by|} |\bx-\by|^{\al-n}
\d\bmu(\bx)\otimes\d\bmu(\by)-2\intli_\Gam \bof(\bx)\d\bmu(\bx).
\]
In view of Theorems \ref{th:3.3}, \ref{t:2.3} and \ref{th:2.4}, the
associated minimal Riesz energy problem is then equivalent to
minimizing the punched functional
\[
\J_\de(\bvfi)=
\iint\limits_{\Gam\x\Gam\wedge 0<\de\le|\bx-\by|}|\bx-\by|^{-m-\bet}
\bvfi(\by)\bvfi(\bx) \d s_\by\d s_\bx -2(\bof,\bvfi)_{L_2(\Gam)},
\]
where $m=n-1$ and $\bet=1-\al\in(0,2)$, over the affine cone
$\cK(\Gamma,{\bf a},{\bf g})$. Then, the measures satisfy
$\d\bmu(\bx)=\bvfi(\bx)\d s$ with $\d s$ being the surface
measure on $\Gam$.

For $\J_\de(\vfi)$ one has the following monotonicity property.

\begin{lem}\label{l:4.2}
Let $0<\de_1<\de_2$ and $\bvfi_{\de_1}^\star,\bvfi_{\de_2}^\star\in
\cK(\Gamma,{\bf a},{\bf g})$ be the minimizers of $\J_{\de_1}$ and $\J_{\de_2}$,
respectively. Then, it holds that
\begin{equation}\label{eq:4.10}
\J_{\de_1}(\bvfi_{\de_1}^\star)\ge \J_{\de_2}(\bvfi_{\de_1}^\star)
\ge\J_{\de_2}(\bvfi_{\de_2}^\star).
\end{equation}
\end{lem}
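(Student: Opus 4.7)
The plan is to establish the two inequalities of \eqref{eq:4.10} separately, as they rest on very different arguments.

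The second inequality, $\J_{\de_2}(\bvfi_{\de_1}^\star) \ge \J_{\de_2}(\bvfi_{\de_2}^\star)$, is immediate from the variational definition: $\bvfi_{\de_2}^\star$ minimizes $\J_{\de_2}$ over the admissible cone $\cK(\Gamma, \ba, \bg)$, and $\bvfi_{\de_1}^\star$ belongs to this same cone by hypothesis, so it is just a test element in the minimizer inequality. No further work is required here.

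For the first inequality, I would subtract the two functionals. The external-field contribution $-2(\bof,\bvfi)_{L_2(\Gam)}$ cancels identically, and since $\de_1<\de_2$ the two quadratic parts differ precisely by the integral over the annular region $\{(\bx,\by):\de_1\le|\bx-\by|<\de_2\}$, giving
\[
\J_{\de_1}(\bvfi)-\J_{\de_2}(\bvfi)=\iint_{\de_1\le|\bx-\by|<\de_2}|\bx-\by|^{-m-\bet}\bvfi(\bx)\bvfi(\by)\,\d s_\bx\,\d s_\by.
\]
Specializing this identity to $\bvfi=\bvfi_{\de_1}^\star$ reduces the first inequality to showing that the integral on the right is nonnegative.

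The main obstacle is a sign subtlety: for $\bvfi\in\cK(\Gamma,\ba,\bg)$ the decomposition $\bvfi=\sum_{i\in I}\al_i\vfi^i$ has component densities $\vfi^i\ge 0$ on $\Gamma_i$, but the prescribed signs $\al_i\in\{\pm 1\}$ may differ across components. Hence, on the diagonal blocks $\Gamma_i\times\Gamma_i$ the integrand is manifestly nonnegative (because $\al_i^2=1$ and $\vfi^i\ge 0$), while on mixed blocks $\Gamma_i\times\Gamma_j$ with $\al_i\al_j=-1$ it is nonpositive and could in principle spoil the sign of the integral. My resolution is geometric: the finitely many compact, pairwise disjoint, boundaryless $C^\infty$-components $\Gamma_i$ admit a strictly positive minimum separation $d_{\min}:=\min_{i\ne j}\mathrm{dist}(\Gamma_i,\Gamma_j)>0$. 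Under the implicit smallness assumption $\de_2\le d_{\min}$, which is consistent with the $\de\to 0$ asymptotic regime of Section \ref{s:comp}, the annulus $\de_1\le|\bx-\by|<\de_2$ is disjoint from all mixed blocks $\Gamma_i\times\Gamma_j$ with $i\ne j$, so only the manifestly nonnegative diagonal contributions remain. Combining this observation with the second inequality yields \eqref{eq:4.10}.
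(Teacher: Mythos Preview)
Your argument follows the same route as the paper: handle the second inequality by the minimization property of $\bvfi_{\de_2}^\star$, and reduce the first to nonnegativity of the annular integral. The paper simply asserts
\[
\iint\limits_{|\bx-\by|>\de_1}|\bx-\by|^{-m-\bet}\bvfi_{\de_1}^\star(\bx)\bvfi_{\de_1}^\star(\by)\,\d s_\by\d s_\bx
\ \ge\
\iint\limits_{|\bx-\by|>\de_2}|\bx-\by|^{-m-\bet}\bvfi_{\de_1}^\star(\bx)\bvfi_{\de_1}^\star(\by)\,\d s_\by\d s_\bx
\]
without further comment. You go further and actually justify this step: you notice that when $I^-\neq\emptyset$ the integrand $\bvfi(\bx)\bvfi(\by)$ can be negative on mixed blocks $\Gam_i\times\Gam_j$ with $\al_i\al_j=-1$, and you close the gap by imposing $\de_2\le d_{\min}:=\min_{i\ne j}\mathrm{dist}(\Gam_i,\Gam_j)$, which confines the annulus to diagonal blocks where the product is nonnegative. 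This is a genuine refinement of the paper's argument; as stated, the paper's inequality is not obviously valid in the signed case without such a restriction. Since the lemma is invoked only in the $\de\to 0$ regime of Section~\ref{s:comp}, your added hypothesis is harmless in context, but it would be cleaner to state it explicitly rather than call it ``implicit''.
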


\begin{proof}
Since $\de_1<\de_2$, the minimizer $\bvfi_{\de_1}^\star=\sum_{j\in
I}\al_j\vfi_{\de_1}^{\star j}$ with $\vfi_{\de_1}^{\star j}\ge 0$ is an
admissible element for minimizing $\J_{\de_2}$.
In particular, it holds
\begin{align*}
\J_{\de_1}(\bvfi_{\de_1}^\star)
&=
\iint\limits_{|\bx-\by|>\de_1} |\bx-\by|^{-m-\bet}\bvfi_{\de_1}^\star(\bx)
\bvfi_{\de_1}^\star(\by)\d s_\by\d s_\bx
-2(\bof,\bvfi_{\de_1}^\star)_{L_2(\Gam)}\\
&\ge
\iint\limits_{|\bx-\by|>\de_2} |\bx-\by|^{-m-\bet}\bvfi_{\de_1}^\star(\bx)
\bvfi_{\de_1}^\star(\by)\d s_\by\d s_\bx
-2(\bof,\bvfi_{\de_1}^\star)_{L_2(\Gam)}
=
\J_{\de_2}(\bvfi_{\de_1}^\star).
\end{align*}
We further find
\[
\J_{\de_2}(\bvfi_{\de_1}^\star)\ge \inf \J_{\de_2}(\bvfi)
=\J_{\de_2}(\bvfi_{\de_2}^\star),
\]
as proposed in \eqref{eq:4.10}.
\end{proof}

In order to see the relation between $\J_\de(\bvfi)$ and $\V_{\bof}(\bvfi)$
in \eqref{eq:2.71x}, let us introduce the compensating quadratic functional
\begin{equation}\label{eq:4.11}
\bP_\de(\bvfi)
:=\intli_\Gam\Bigg\{\pf
\intli_{|\bx-\by|\le\de}|\bx-\by|^{-m-\bet}\bvfi(\by)\d s_\by\Bigg\}
\bvfi(\bx)\d s_\bx.
\end{equation}
Then, we obtain
\begin{align*}
\bP_\de(\bvfi)+\J_\de(\bvfi)
&=
\intli_\Gam\Bigg\{\pf
\intli_\Gam |\bx-\by|^{-m-\bet}\bvfi(\by)\d s_\by\Bigg\}
\bvfi(\bx)\d s_\bx-2(\bof,\bvfi)_{L_2(\Gam)}\\
&=
(V_\bet\bvfi,\bvfi)_{L_2(\Gam)}-2(\bof,\bvfi)_{L_2(\Gam)}
=\V_{\bof}(\bvfi),
\end{align*}
and thus
\[
\J_\de(\bvfi)
=
\V_{\bof}(\bvfi)-\bP_\de(\bvfi)=(V_\bet\bvfi,\bvfi)_{L_2(\Gam)}
-2(\bof,\bvfi)_{L_2(\Gam)}-\bP_\de(\bvfi).
\]
For the corresponding functional $\bP_\de$, there holds

\begin{lem}\label{l:4.3}
Let $\bvfi\in\cK(\Gamma,{\bf a},{\bf g})$. Then
\[
\bP_\de(\bvfi)
=
-\f{1}{\bet}\f{1}{c_m}
\de^{-\bet}\|\bvfi\|^2_{L_2(\Gam)}+\bP_\de\p(\bvfi),
\]
where $ \bP_\de\p(\bvfi)$ satisfies
\[
|\bP_\de\p(\bvfi)|
\le
c\|\bvfi\|_{H^{\bet/2}(\Gam)}^2
\]
with a constant $c$ independent of $\de$.
Moreover
\begin{equation}\label{eq:4.14}
\lim\limits_{\de\to 0}\bP_\de\p(\bvfi)=0 \text{for every}
\bvfi\in H^{\bet/2}(\Gam).
\end{equation}
\end{lem}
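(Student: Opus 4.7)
The plan is to isolate the principal singular contribution algebraically, reducing the partie finie to a one-dimensional radial computation. I would use the elementary identity
\[
\bvfi(\bx)\bvfi(\by) = \tfrac12\bvfi(\bx)^2 + \tfrac12\bvfi(\by)^2 - \tfrac12\bigl(\bvfi(\bx)-\bvfi(\by)\bigr)^2
\]
inside the partie finie defining $\bP_\de(\bvfi)$ in \eqref{eq:4.11}. By linearity of $\pf$ and the symmetry $\bx\leftrightarrow\by$, the first two summands combine into a diagonal contribution
\[
D_\de(\bvfi) := \intli_\Gam h_\de(\bx)\,\bvfi(\bx)^2\,\d s_\bx, \qquad h_\de(\bx) := \pf\intli_{|\bx-\by|\le\de}|\bx-\by|^{-m-\bet}\,\d s_\by,
\]
while the third yields the non-negative quadratic form
\[
N_\de(\bvfi) := \tfrac12\iint\limits_{|\bx-\by|\le\de}|\bx-\by|^{-m-\bet}\bigl(\bvfi(\bx)-\bvfi(\by)\bigr)^2\,\d s_\bx\,\d s_\by.
\]
For $\bvfi\in H^{\bet/2}(\Gam)$ with $0<\bet<2$, the Slobodeckij integrand of $N_\de$ is non-negative and globally integrable on $\Gam\times\Gam$, so no regularization is needed there; the whole partie finie has been concentrated in $D_\de$.

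Next I would expand $h_\de(\bx)$ asymptotically by means of Martensen's surface polar coordinates from Appendix~\ref{a:b}: in a local chart around $\bx$ one has $\d s_\by = J(\bx,\vr,\om)\,\vr^{m-1}\,\d\vr\,\d\om$ with $J(\bx,0,\om)=1$, and the $\vr$-linear coefficient is odd in $\om$ because of the orthonormal tangent frame chosen at the base point. Hence $\int_{S^{m-1}}J(\bx,\vr,\om)\,\d\om = \om_m + O(\vr^2)$, and
\[
h_\de(\bx) = \om_m\,\pf\intli_0^\de \vr^{-\bet-1}\,\d\vr + O(\de^{2-\bet}) = -\frac{1}{\bet\, c_m}\de^{-\bet} + r_\de(\bx),
\]
with $|r_\de(\bx)|\le C\de^{2-\bet}$ uniformly in $\bx\in\Gam$, since the curvature-dependent coefficients of $J$ are smooth on the compact $\Gam$. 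Substituting then yields the announced decomposition with
\[
\bP_\de\p(\bvfi) = \intli_\Gam r_\de(\bx)\,\bvfi(\bx)^2\,\d s_\bx - N_\de(\bvfi).
\]

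For the uniform bound I would estimate the two pieces separately: first,
\[
\Bigl|\intli_\Gam r_\de\,\bvfi^2\,\d s\Bigr|\le C\de^{2-\bet}\|\bvfi\|^2_{L_2(\Gam)}\le C'\|\bvfi\|^2_{H^{\bet/2}(\Gam)}
\]
uniformly for $\de\in(0,\de_0]$, using the continuous embedding $H^{\bet/2}(\Gam)\hookrightarrow L_2(\Gam)$; second, $0\le N_\de(\bvfi)\le c''\|\bvfi\|^2_{H^{\bet/2}(\Gam)}$ by the Slobodeckij/Gagliardo characterization of the $H^{\bet/2}$-seminorm on the compact $C^\infty$-manifold $\Gam$. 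The limit \eqref{eq:4.14} then follows from $\de^{2-\bet}\to 0$ for the diagonal remainder and from Lebesgue's dominated convergence applied to $N_\de$: the indicator $\mathbf{1}_{|\bx-\by|\le\de}$ tends to $0$ pointwise off the diagonal while the integrand is dominated by the integrable Slobodeckij kernel.

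The main obstacle will be the Martensen expansion of $h_\de$: the vanishing of the angular mean of the $\vr$-linear term of $J(\bx,\vr,\om)$ is precisely what forces the remainder to be of order $\de^{2-\bet}$ rather than $\de^{1-\bet}$, the latter of which would fail to vanish as $\de\to 0$ when $\bet\in[1,2)$. A minor technical point is the legitimacy of splitting the partie finie via the algebraic identity above; this is justified a posteriori because $N_\de$ is absolutely convergent in the ordinary Lebesgue sense, so the entire partie finie is carried by the explicit radial integral in $h_\de$.
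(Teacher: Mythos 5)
Your argument is correct, and its skeleton is the same as the paper's: Martensen's surface polar coordinates together with the expansion $\d s_\Gam=\vr^{n-2}\d\vr\wedge\d\bom+\big(\vr^{n}a(\bTheta)+\cO(\vr^{n+1})\big)\d\vr\wedge\d\bom$ of Theorem~\ref{t:3.3} to extract the leading term $-(\bet c_m)^{-1}\de^{-\bet}\|\bvfi\|^2_{L_2(\Gam)}$, and the truncated Gagliardo--Slobodeckij form to control the remainder by $\|\bvfi\|^2_{H^{\bet/2}(\Gam)}$ (which works precisely because $\bet/2<1$). Where you differ is in the bookkeeping of the remainder, and your version is tidier: symmetrizing via $\bvfi(\bx)\bvfi(\by)=\tfrac12\bvfi(\bx)^2+\tfrac12\bvfi(\by)^2-\tfrac12(\bvfi(\bx)-\bvfi(\by))^2$ concentrates the entire partie finie in the purely radial diagonal term $h_\de(\bx)$, so that $\bP_\de\p$ is an explicitly $\cO(\de^{2-\bet})$ diagonal piece plus the absolutely convergent truncated seminorm $N_\de$; the paper instead carries the curvature corrections as off-diagonal kernels $b,c$ of pseudohomogeneous degree $-\bet-m+1$ and bounds them through the operators $\bB_{\bet-1},\bC_{\bet-1}$. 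Your route also streamlines \eqref{eq:4.14}: dominated convergence applied to $N_\de$ works at once for every $\bvfi\in H^{\bet/2}(\Gam)$, whereas the paper first treats $\bvfi\in C^\8(\Gam)$ by Taylor expansion and then runs a density/approximation estimate. Two small remarks: in Martensen's coordinates the $\vr$-linear term of the Jacobian is not merely of vanishing angular mean, it is absent altogether (Theorem~\ref{t:3.3}), which is what actually guarantees your uniform $\cO(\de^{2-\bet})$ remainder --- your parity argument is the right heuristic, but the appendix gives the stronger fact you need; and your sign $-N_\de$ is the one consistent with the symmetrization in \eqref{eq:4.15} (the $+\tfrac12$ in the paper's rewriting looks like a slip), which is in any case immaterial for the bound on $|\bP_\de\p(\bvfi)|$.
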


\begin{proof}
Using Martensen's coordinates of $\Gam$ in the vicinity of $\bx\in\Gam$
(see Theorem~\ref{t:3.3} in Appendix \ref{a:b}), for every $\bvfi\in
C^\8(\Gam)$,
we have
\begin{align*}
\bP_\de(\bvfi)
&=
\intli_\Gam\Bigg\{\pf \intli_{0<r\le\de\wedge |\bTheta|=1}
\bvfi(\bx+r\bTheta)r^{-\bet-1}\d r\wedge\d\bom\\
&\qquad\quad
+ \intli_{0<r\le\de\wedge |\bTheta|=1} (\bx+r\bTheta) a(\bx,r) r^{-\bet+1}
\d r\wedge\d\bom\Bigg\} \bvfi(\bx)\d s_\bx\\
&=
\Bigg\{\pf\intli_{0<r\le\de\wedge |\bTheta|=1} r^{-\bet-1}
\d r\wedge\d\bom\bigg\}\intli_\Gam |\bvfi(\bx)|^2\d s_\bx\\
&\quad
+ \intli_\Gam \Bigg\{\pf \intli_{0<r\le\de\wedge |\bTheta|=1}
\big\{\bvfi(\bx+r\bTheta)-\bvfi(\bx)\big\}r^{-\bet-1}\d r\wedge\d\bom\Bigg\}
\bvfi(\bx)\d s_\bx\\
&\quad
+ \intli_\Gam \Bigg\{\pf \intli_{0<r\le\de\wedge |\bTheta|=1}
\bvfi(\bx+r\bTheta) a(\bx,r) r^{-\bet+1}\d r\wedge\d\bom\Bigg\}
\bvfi(\bx)\d s_\bx.
\end{align*}
Since
\[
\pf\intli_{0<r\le\de\wedge |\bTheta|=1}r^{-\bet-1}\d r\wedge
\d\bom=-\f{1}{\bet}\f{1}{c_m}\de^{-\bet}
\]
and $\bP_\de(\bvfi)$ in
\eqref{eq:4.11} is symmetric, we find
\[
\bP_\de(\bvfi)=-(\bet c_m)^{-1}\de^{-\bet}\|\bvfi\|^2_{L_2(\Gam)}
+\bP_\de\p (\bvfi).
\]
Herein,  $\bP\p_\de(\vfi)$ is given by
\begin{align}\label{eq:4.15}
2\bP\p_\de(\bvfi)
&=
\iint\limits_{|\bx-\by|\le\de}\big\{\bvfi(\bx)\big(\bvfi(\by)-\bvfi(\bx)\big)+
\bvfi(\by)\big(\bvfi(\bx)-\bvfi(\by)\big)\big\}|\bx-\by|^{-\bet-m}
\d s_\by \d s_\bx\no\\
&\qquad-
\iint\limits_{|\bx-\by|\le\de}\big(\bvfi(\by)-\bvfi(\bx)\big) a(\by,r)
r^{-\bet +1}\d r\wedge\d\bom\bvfi(\bx)\d s_\bx\no\\
&\qquad-
\iint_{|\bx-\by|\le\de}\big(\bvfi(\bx)-\bvfi(\by)\big)a(\bx,r)
r^{-\bet+1}\d r\wedge\d\bom\,\bvfi(\by) \d s_\by\no\\
&\qquad+
\iint\limits_{|\bx-\by|\le\de}\bvfi(\by) a(\bx,r)
r^{-\bet+1}\d r\wedge\d\bom\,\bvfi(\bx)\d s_\bx\no\\
&\qquad+
\iint\limits_{|\bx-\by|\le\de}\bvfi(\bx) a(\by,r)
r^{-\bet+1}\d r\wedge\d\bom\bvfi(\by)\d s_\by.
\end{align}
We rewrite $\bP\p_\de(\bvfi)$ according to
\begin{align*}
\bP\p_\de(\bvfi)
&=
\ha\iint\limits_{|\bx-\by|\le\de}|\bvfi(\bx)-\bvfi(\by)|^2
|\bx-\by|^{-\bet-m}\d s_\by\d s_\bx\\
&\qquad+
\iint\limits_{|\bx-\by|\le\de} b(\bx,\by)\bvfi(\bx)\bvfi(\by)\d s_\by\d s_\bx
+\iint\limits_{|\bx-\by|\le\de} c(\bx,\by)\bvfi(\bx)^2\d s_\by\d s_\bx,
\end{align*}
where $b(\bx,\by)$ and $c(\bx,\by)$ are kernels which posess
pseudohomogeneous expansions of degree $-\bet-m+1$.
This means that
\begin{align*}
\bP_\de\p(\bvfi)
&=
\ha \iint\limits_{|\bx-\by|\le\de}|\bvfi(\bx)-\bvfi(\by)|^2 |\bx-\by|^{-\bet-m}
\d s_\by\d s_\bx\no\\
&\qquad+
\intli_\Gam (\bB_{\bet-1}\bvfi)(\bx)\bvfi(\bx)\d s_\bx+
\intli_\Gam (\bC_{\bet-1}1)\bvfi(\bx)^2\d s_\bx
\end{align*}
with classical pseudodifferential operators $\bB_{\bet-1}$
and $\bC_{\bet-1}$ of degree $\bet-1$ on $\Gam$. Since
the constant charge $1$ is a smooth function on
$\Gam$ and $\bB_{\bet-1}\dopu H^{\bet/2}(\Gam)\to
H^{-\bet/2+1}(\Gam)\hookrightarrow H^{-\bet/2}(\Gam)$
for $\bvfi\in H^{\bet/2}(\Gam)$, we finally arrive at
\[
\big|\bP\p_\de(\bvfi)\big|
\le
c_1\|\bvfi\|^2_{H^{\bet/2}(\Gam)} +c_2\|\bvfi\|^2_{L_2(\Gam)}
\le
c\|\bvfi\|^2_{H^{\bet/2}(\Gam)},
\]
as proposed.\footnote{For $\bet=1$, $\bB_0$ is a singular
Mikhlin--Calderon integral operator with principal part
$b(\bx,\bx)\bTheta(\om)$ which satisfies the Mikhlin condition
$b(\bx,\bx)\int_{|\bTheta|=1}\bTheta(\om)\d\bom=0$.}

In order to show \eqref{eq:4.14}, consider first $\bvfi\in C^\8(\Gam)$
and use Taylor's expansion about $\bx\in\Gam$ in \eqref{eq:4.15}.
Then, all the integrals on the right hand side are weakly singular
tending to zero with $\de\to 0$. For $\bvfi\in H^{\bet/2}(\Gam)$
approximate $\bvfi$ by $\bvfi_\vep\in C^\8(\Gam)$, satisfying
$\|\bvfi-\bvfi_\vep\|_{H^{\bet/2}(\Gam)}<\vep$. Then
\begin{align*}
&\big|\bP_\de\p(\bvfi)-\bP_\de\p(\bvfi_\vep)\big|\\
&\quad\le\ha\iint\limits_{|\bx-\by|\le\de}\big\{|\bvfi(\bx)-\bvfi(\by)|^2-
|\bvfi_\vep(\bx)-\bvfi_\vep(\by)|^2\big\}|\bx-\by|^{-\bet-m}\d s_\by
\d s_\bx\\
&\qquad+\big|(\bB_{\bet-1}\bvfi,\bvfi)_{L_2(\Gam)}-
(\bB_{\bet-1}\bvfi_\vep,\bvfi_\vep)_{L_2(\Gam)}\big|\\
&\qquad+\|\bC_{\bet-1}1\|_{L_2(\Gam)}\big|\|\bvfi\|_{L_2(\Gam)}^2-
\|\bvfi_\vep\|_{L_2(\Gam)}^2\big|.
\end{align*}
With
\[
\|\bvfi\|_{H^{\bet/2}(\Gam)}^2-\|\bvfi_\vep\|_{H^{\bet/2}(\Gam)}^2
\le 3\|\bvfi\|_{H^{\bet/2}(\Gam)}\,\|\bvfi-\bvfi_\vep\|_{H^{\bet/2}(\Gam)}
\]
for $\|\bvfi_\vep\|_{H^{\bet/2}(\Gam)}\le 2\|\bvfi\|_{H^{\bet/2}(\Gam)}^2$,
one has
\begin{align*}
&\big|\bP_\de\p(\bvfi)-\bP_\de\p(\bvfi_\vep)\big|\\
&\ \le\f{3}{2}
\Bigg\{\ \iint\limits_{|\bx-\by|\le\de}\big|\bvfi(\bx)-\bvfi_\vep(\bx)\big|
\big|\bvfi(\by)-\bvfi_\vep(\by)\big||\bx-\by|^{-\bet-m}\d s_\by\d s_\bx
\Bigg\}^\ha\|\bvfi\|_{H^{\bet/2}(\Gam)}\\
&\qquad +
\big|\big(\bB_{\bet-1}(\bvfi-\bvfi_\vep),\bvfi\big)_{L_2(\Gam)}\big|
+\big|\big(\bB_{\bet-1}\bvfi_\vep,(\bvfi-\bvfi_\vep)\big)_{L_2(\Gam)}\big|\\
&\qquad +
3\|\bC_{\bet-1}1\|_{L_2(\Gam)}\|\bvfi\|_{H^{\bet/2}(\Gam)}
\|\bvfi-\bvfi_\vep\|_{H^{\bet/2}(\Gam)}\\
&\ \le
c\|\bvfi\|_{H^{\bet/2}(\Gam)} \|\bvfi-\bvfi_\vep\|_{H^{\bet/2}(\Gam)}
\le c\vep \|\bvfi\|_{H^{\bet/2}(\Gam)}.
\end{align*}
Then
\[
\ov{\lim_{\de\to 0}}\big|\bP_\de\p(\bvfi)\big|\le
c\vep\|\bvfi\|_{H^{\bet/2}(\Gam)}
\]
for any $\vep> 0$ which implies \eqref{eq:4.14}.
\end{proof}

Since
\begin{equation}\label{eq:connection}
\begin{aligned}
\J_\de(\bvfi)
&=
\iint\limits_{0<\de\le|\bx-\by|}|\bx-\by|^{-\bet-m}\bvfi(\by)\bvfi(\bx)
\d s_\by\d s_\bx-2(\bof,\bvfi)_{L^2(\Gam)}\\
&=
\f{1}{\vep}\|\bvfi\|^2_{L_2(\Gam)}+ (V_\bet\bvfi,\bvfi)_{L_2(\Gam)}
-2(\bof,\bvfi)_{L_2(\Gam)}-\bP\p_\de(\bvfi),
\end{aligned}
\end{equation}
where $\vep=\bet c_m\de^\bet\to 0$ for $\de\to 0$,
Lemma \ref{l:4.2} implies

\begin{cor}\label{c:4.4}
Let $\bet_0<2$ and $\de_0>0$ be given. Then, there exist
positive constants $c,c\p>0$ such that the estimate
\[
\J_\de(\bvfi)-\f{1}{\bet}\,\f{1}{c_m}
  \de^{-\bet}\|\bvfi\|^2_{L_2(\Gam)}\le c\|\bvfi\|^2_{H^{\bet/2}(\Gam)}
+2\|\bof\|_{L_2(\Gam)}\|\bvfi\|_{L_2(\Gam)}\le c\p
\]
holds uniformly for $\bvfi\in\cK^{\bet/2}(\Gam)$, $0<\de\le\de_0$ and
$0\le\bet\le\bet_0$.
\end{cor}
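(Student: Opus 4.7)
The plan is to exploit identity \eqref{eq:connection} directly. Since $\vep=\bet c_m\de^\bet$, one has $1/\vep=\bet^{-1}c_m^{-1}\de^{-\bet}$, so subtracting $\bet^{-1}c_m^{-1}\de^{-\bet}\|\bvfi\|^2_{L_2(\Gam)}$ from $\J_\de(\bvfi)$ cancels the first summand on the right of \eqref{eq:connection} exactly, leaving
\[
\J_\de(\bvfi)-\f{1}{\bet}\f{1}{c_m}\de^{-\bet}\|\bvfi\|^2_{L_2(\Gam)}
=(V_\bet\bvfi,\bvfi)_{L_2(\Gam)}-2(\bof,\bvfi)_{L_2(\Gam)}-\bP\p_\de(\bvfi).
\]
So the whole corollary reduces to estimating each of these three terms from above.

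From here I would bound each summand separately, using results already in hand. The first is controlled by the upper half of the G\aa rding inequality \eqref{eq:2.2} in Theorem \ref{th:2.1}, giving $(V_\bet\bvfi,\bvfi)_{L_2(\Gam)}\le c_2\|\bvfi\|^2_{H^{\bet/2}(\Gam)}$. The third is bounded in absolute value by $c\,\|\bvfi\|^2_{H^{\bet/2}(\Gam)}$ with a constant independent of $\de\in(0,\de_0]$, which is precisely the content of Lemma \ref{l:4.3}. The middle term is dominated by $2\|\bof\|_{L_2(\Gam)}\|\bvfi\|_{L_2(\Gam)}$ via Cauchy--Schwarz. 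Summing the three bounds and renaming the constant produces the first inequality of the corollary. The second inequality $\le c\p$ then follows by restricting $\bvfi$ to a norm-bounded subset of $\cK^{\bet/2}(\Gam)$, where both $\|\bvfi\|_{H^{\bet/2}(\Gam)}$ and $\|\bvfi\|_{L_2(\Gam)}$ are uniformly controlled.

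The main obstacle I anticipate is the uniformity in $\bet\in[0,\bet_0]$ with $\bet_0<2$. The constant $c_2$ from Theorem \ref{th:2.1} is tied to the principal symbol coefficient $C(n-1,\bet)$ in \eqref{eq:sy}; since the assumption $\bet\le\bet_0<2$ keeps the gamma factor $\Gam(-\bet/2)$ away from its nonpositive integer poles, $C(n-1,\bet)$ is continuous in $\bet$ and therefore bounded on $[0,\bet_0]$, and the same continuity propagates to the norm-equivalence constants in \eqref{eq:2.2}. Likewise, the pseudohomogeneous expansions used in the proof of Lemma \ref{l:4.3} have coefficients depending continuously on $\bet$, so compactness of $[0,\bet_0]$ gives a uniform bound on the constant there as well. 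No genuinely new estimate is needed; the work consists in tracking the $\bet$-dependence through constants already produced in Theorem \ref{th:2.1} and Lemma \ref{l:4.3}.
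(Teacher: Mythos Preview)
Your approach is correct and essentially identical to the paper's: the corollary is stated as an immediate consequence of identity \eqref{eq:connection} together with the bound on $\bP_\de'$ from Lemma~\ref{l:4.3}, and you have simply written out the three-term estimate that the paper leaves implicit. Your remark that the final bound $\le c'$ only makes sense on a norm-bounded subset of $\cK^{\bet/2}(\Gam)$, and your tracking of the $\bet$-dependence of the constants through the symbol $C(n-1,\bet)$, are both sound and fill in points the paper passes over in silence.
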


The functional \eqref{eq:connection} coincides with the
functional from \eqref{eq:4.1} except for the perturbation
term $\bP\p_\de(\bvfi)$. Hence, we can proceed in complete
analogy to the proof of Theorem \ref{th:4.1}.

The Lagrangian to the punched energy functional reads as
\begin{align*}
\overset{\circ}{\bL}_\lam(\bsi)&:=
\ha \big((\vep V_\bet\bsi+\bsi),\bsi\big)_{L_2(\Gam)}
-\vep\bP\p_\de (\bsi)-\vep(\bof,\bsi)_{L_2(\Gam)}\\
&\hspace*{3.5cm}+\sum_{j\in I} \al_j\lam_j \big(a^j-(g_j,\si^j)_{L_2(\Gam_j)}\big),
\end{align*}
where the first order necessary optimality condition
is given by
\begin{gather*}
\pa_{\si^k}\overset{\circ}{\bL}_\lam(\bsi)=
\vep\al_k V_\bet \si_\vep^k+ \al_k\si_\vep^k-\vep \al_k\bP_\de\pp
\si_\vep^k-\vep\al_k f_k-\al_k\lam_k g_k=0,\\
(g_k,\si_\vep^k)_{L_2(\Gam_k)}=a^k,\ k\in I.
\end{gather*}
Again, $\blam\neq \bnull$ since the constraints \eqref{eq:4kreuz}
are active. Here,
\begin{align*}
\bP_\de\pp\si_\vep^k =\pa_{\si_\vep^k}\bP_\de\p (\bsi_\vep)
&= \intli_{|\bx-\by|\le\de}\big(\si_\vep^k(\by)-\si_\vep^k(\bx)\big)
|\bx-\by|^{-m-\bet}\d s_\by\\
&\qquad\qquad+\bB_{\bet-1}\si_\vep^k
+\bB_{\bet-1}^\star\si_\vep^k+2c_{\bet-1}(\bx)\si_\vep^k(\bx)
\end{align*}
with
\[
c_{\bet-1}(\bx)
=\intli_{\by\in\Gam\wedge|\bx-\by|\le\de}\!\!\!\!c(\bx,\by)
\d s_\by
\]
and $\bB_{\bet-1}$ and $\bB_{\bet-1}^\star$ being
bounded linear operators:
\begin{align*}
\bB_{\bet-1}\si_\vep^k
&=\intli_{\by\in\Gam\wedge|\cdot-\by|\le\de}b(\cdot,\by)
\si_\vep^k(\by)\d s_\by:H^{\f{3}{2}\bet}(\Gam)\to H^{\ha\bet}(\Gam),\\
\bB_{\bet-1}^\star\si_\vep^k
&= \intli_{\by\in\Gam\wedge|\cdot-\by|\le\de}b(\by,\cdot)
\si_\vep^k(\by)\d s_\by:H^{\ha\bet}(\Gam)\to H^{-\ha\bet}(\Gam).
\end{align*}

Under the assumptions of Theorem \ref{th:4.1} and as
in the proof of Theorem \ref{th:4.1}, we finally obtain the
asymptotic expansion of the minimizer $\bsi_\vep^\star$
as well as of the Lagrangian multipliers:
\[
\bsi_\vep^\star=\bsi_0+\vep\bsi_1+\vep^2\bsi_2\text{satisfying}
\para\bsi_j\para_{\cH_\vep}\le c,\ j=0,1,2,
\]
and $\blam=\blam_0+\vep\blam_1+\vep^2\blam_2$
where
\[
(g_k,\si_\vep^k)_{L_2(\Gam_k)}=a^k,\ k\in I.
\]

It turns out that $\si_0^k$ and $\lam_0^k$ are exactly
the same as in \eqref{eq:4.5}. Moreover, we have to
replace $V_\bet$ by $(V_\bet-\bP\pp_\de)$ in the equations
\eqref{eq:4.8a} and \eqref{eq:4.8b}, \eqref{eq:4.8c}. Note that
$\bsi_0\in H^{\f{3}{2}\bet}(\Gam)\hookrightarrow H^{\ha\bet}(\Gam)$
and, hence, $\bP\pp_\de \si_0^k$ for $\de\to 0$ tends to zero in
$H^{\bet/2}(\Gam)$ due to \eqref{eq:4.14}.
(If $\bsi_0,\bsi_1\in C^1(\Gam)$ for $0<\bet<1$ or $\bsi_0,\bsi_1\in
C^2(\Gam)$ for $1\le\bet\le 2$, then $\bP\pp_\de\si_0^k$ and
$\bP\pp_\de\si_1^k=\cO (\de^{1-\bet})$, respectively $\cO
(\de^{2-\bet})$.)

Collecting these results, we have for the punched energy Riesz
minimum problem the following result:

\begin{theorem}\label{th:4.5}
Under the same assumptions as for Theorem \ref{th:4.1},
the minimization problem
\begin{equation}\label{eq:4.51}
\f{1}{2}\iint\limits_{\overset{0<\de\le|\bx-\by|}{\bx,\by\in\Gam}}
|\bx-\by|^{\al-n}\bsi(\by)\bsi(\bx)\d s_\by \d s_\bx-
\intli_\Gam \bof(\bx)\bsi(\bx)\d s_\bx\to\min,
\end{equation}
subject to \eqref{eq:4.2}, has for every $\de>0$ a unique
solution $\bsi_\vep^\star$. It admits for $\vep=\bet c_m\de^\bet>0$
the asymptotic expansion
\begin{equation}\label{eq:expansion}
\bsi_\vep^\star=\bsi_0+\vep\bsi_1+\vep^2\bsi_2\text{satisfying}
\para\bsi_j\para_{\cH_\vep}\le c,\ j=0,1,2
\end{equation}
with a constant $c$ independent of $\vep$. In particular, with
$\bsi_\vep^\star=(\al_k\si_0^k+\al_k\si_1^k+\al_k\si_2^k)_{k\in I}$
there holds
\begin{align*}
\si_0^k
&=
g_ka^k(g_k,g_k)_{L_2(\Gam_k)}^{-1},\\
\si_1^k
&=
\lam_1^k g_k-(V_\bet-\bP\pp_\de)\si_0^k+f_k,
\end{align*}
where
\[
\lam_1^k= (g_k,g_k)^{-1}
\big((V_\bet-\bP\pp_\de)\si_0^k-f_k,g_k\big)_{L_2(\Gam_k)}
\]
and
\[
\si_2^k =
\big(I+\vep(V_\bet-P_j\pp)\big)^{-1}
\big\{\lam_2^k g_k-(V_\bet-P_j\pp)\si_1^k\big\}
\]
with
\[
\lam_2^k= (g_k,g_k)^{-1}_{L_2(\Gam_k)}
\big\{(V_\bet-P_j\pp)\si_1^k+\vep(V_\bet-P_j\pp)\si_2^k+\si_2^k\big\}.
\]
\end{theorem}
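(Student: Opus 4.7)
The plan is to mimic the proof of Theorem~\ref{th:4.1} almost verbatim, exploiting the identity
\[
\J_\de(\bvfi)=\tf{1}{\vep}\|\bvfi\|^2_{L_2(\Gam)}+(V_\bet\bvfi,\bvfi)_{L_2(\Gam)}
-2(\bof,\bvfi)_{L_2(\Gam)}-\bP\p_\de(\bvfi),\qquad \vep=\bet c_m\de^\bet,
\]
established immediately before the theorem. Existence and uniqueness of $\bsi_\vep^\star$ for each fixed $\de>0$ then follow by the standard scheme: the quadratic part without $\bP\p_\de$ equals $\tf{1}{\vep}\para\bvfi\para^2_{\bcH_\vep}$ and is thus coercive on $\bcH_\vep\sbs H^{\bet/2}(\Gam)$; by Lemma~\ref{l:4.3} the remainder $-\bP\p_\de$ is bounded by $c\|\bvfi\|^2_{H^{\bet/2}(\Gam)}$ and tends to zero pointwise as $\de\to 0$, so the full quadratic form is still strictly coercive on $\bcH_\vep$ provided $\de$ is small. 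Since the affine cone $\cK(\Gam,\ba,\bg)$ is convex and closed in $H^{\bet/2}(\Gam)$, the Lax--Milgram lemma combined with strict convexity delivers the unique minimizer.

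For the expansion, I would write down the Lagrangian $\overset{\circ}{\bL}_\lam(\bsi)$ displayed just before the theorem, derive the Euler--Lagrange system together with the active constraints $(g_k,\si_\vep^k)_{L_2(\Gam_k)}=a^k$, and substitute the ansatz
\[
\bsi_\vep^\star=\bsi_0+\vep\bsi_1+\vep^2\bsi_2,\qquad \blam=\blam_0+\vep\blam_1+\vep^2\blam_2.
\]
Matching equal powers of $\vep$ then proceeds exactly as in Theorem~\ref{th:4.1}, except that $V_\bet$ is everywhere replaced by $V_\bet-\bP\pp_\de$. The order $\vep^0$ is unaffected by $\bP\pp_\de$ and reproduces $\si_0^k=g_k a^k(g_k,g_k)^{-1}_{L_2(\Gam_k)}$ and $\lam_0^k=(g_k,g_k)^{-1}_{L_2(\Gam_k)}a^k$; the orders $\vep^1$ and $\vep^2$ yield the claimed formulas for $\si_1^k,\lam_1^k,\si_2^k,\lam_2^k$ by the very same linear algebra.

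The hard part will be the uniform estimate $\para\bsi_j\para_{\bcH_\vep}\le c$. The mapping properties of the constituents of $\bP\pp_\de$ listed after the Lagrangian, in particular $\bB_{\bet-1}\dopu H^{\f{3}{2}\bet}(\Gam)\to H^{\ha\bet}(\Gam)$ and $\bB_{\bet-1}^\star\dopu H^{\ha\bet}(\Gam)\to H^{-\ha\bet}(\Gam)$, guarantee that each right-hand side in the hierarchy lies in $H^{-\bet/2}(\Gam)\sbs\bcH_\vep\p$. To close the bound for $\bsi_2$, I must invert the operator $I+\vep(V_\bet-\bP\pp_\de)\dopu\bcH_\vep\to\bcH_\vep\p$, which reduces to showing that $\vep\bP\pp_\de$ is a small perturbation of $\vep V_\bet+I$. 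Here the scaling $\vep=\bet c_m\de^\bet$ is decisive: combined with the bound and the pointwise vanishing of $\bP\p_\de$ from Lemma~\ref{l:4.3}, it forces the operator norm of $\vep\bP\pp_\de$ on $\bcH_\vep$ to tend to zero as $\de\to 0$, so the isomorphism $\vep V_\bet+I\dopu\bcH_\vep\to\bcH_\vep\p$ obtained in the proof of Theorem~\ref{th:4.1} persists for small $\de$ and the hierarchy of estimates from that proof carries over verbatim, yielding \eqref{eq:expansion}.
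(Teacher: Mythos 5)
Your proposal follows the paper's own route step by step: the identity \eqref{eq:connection}, the Lagrangian $\overset{\circ}{\bL}_\lam$, the first order conditions with active constraints, the ansatz $\bsi_\vep^\star=\bsi_0+\vep\bsi_1+\vep^2\bsi_2$, and the matching of powers of $\vep$ with $V_\bet$ replaced by $V_\bet-\bP\pp_\de$, which indeed reproduces the stated formulas for $\si_0^k,\lam_1^k,\si_1^k,\si_2^k,\lam_2^k$. The gap lies in the one analytic point that this transfer actually requires, namely the $\vep$-uniform estimates and the invertibility of $I+\vep(V_\bet-\bP\pp_\de)$ on $\bcH_\vep$. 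You claim that the scaling $\vep=\bet c_m\de^\bet$ together with Lemma \ref{l:4.3} ``forces the operator norm of $\vep\bP\pp_\de$ on $\bcH_\vep$ to tend to zero as $\de\to0$''. Lemma \ref{l:4.3} gives only (i) a $\de$-\emph{independent} bound $|\bP\p_\de(\bvfi)|\le c\|\bvfi\|^2_{H^{\bet/2}(\Gam)}$ and (ii) \emph{pointwise} convergence $\bP\p_\de(\bvfi)\to0$ for each fixed $\bvfi$; neither implies norm convergence. Quantitatively, from $\para\bvfi\para^2_{\cH_\vep}\ge \vep c_0\|\bvfi\|^2_{H^{\bet/2}(\Gam)}+(1-\vep c_1)\|\bvfi\|^2_{L_2(\Gam)}$ (G{\aa}rding inequality \eqref{eq:2.2}) one obtains only $\vep\,|(\bP\pp_\de\bvfi,\bw)_{L_2(\Gam)}|\le C\,\para\bvfi\para_{\cH_\vep}\para\bw\para_{\cH_\vep}$ with $C\sim c/c_0$ independent of $\vep$, i.e.\ an $O(1)$ and not an $o(1)$ relative bound. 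Moreover the leading part of $\bP\p_\de$, the truncated double integral $\tf12\iint_{|\bx-\by|\le\de}|\bvfi(\bx)-\bvfi(\by)|^2|\bx-\by|^{-\bet-m}\d s_\by\d s_\bx$, keeps an $H^{\bet/2}$-relative norm bounded away from zero uniformly in $\de$ (test with $\bvfi$ oscillating on scales much smaller than $\de$, for which this tail is comparable to the full seminorm), so the operator norm genuinely does not vanish as $\de\to0$: the mechanism you propose cannot be repaired by a sharper constant.

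The same non sequitur enters your existence and uniqueness argument: subtracting from $\f1\vep\para\cdot\para^2_{\cH_\vep}$ a form that is merely bounded by $c\|\cdot\|^2_{H^{\bet/2}(\Gam)}$, with $c$ possibly larger than the coercivity constant in \eqref{eq:stp}, cannot be absorbed by the $\f1\vep\|\cdot\|^2_{L_2(\Gam)}$ term, which controls only the $L_2$ part; and in any case you would obtain the statement only for small $\de$, whereas the theorem asserts a unique solution for every $\de>0$. Note what the paper actually uses at this point: in the hierarchy it applies $\bP\pp_\de$ only to the fixed regular functions $\si_0^k\in H^{\f32\bet}$ and $\si_1^k$, exploiting the order-$(\bet-1)$ mapping properties of $\bB_{\bet-1}$, $\bB_{\bet-1}^\star$, $c_{\bet-1}$ together with \eqref{eq:4.14}, and for the $\si_2^k$-equation it carries over the isomorphism property of $\vep V_\bet+I$ from the proof of Theorem \ref{th:4.1} to $I+\vep(V_\bet-\bP\pp_\de)$ by analogy. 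Your proposal is precisely the place where that uniform invertibility (and the strict convexity of the punched functional) ought to be established, and the argument offered does not establish it; a correct treatment must either compare the constant of Lemma \ref{l:4.3} with the G{\aa}rding constant, or exploit the structure of $\bP\pp_\de$ (positivity of its leading Gagliardo part inside the punched form, lower order of the remaining pieces) rather than an operator-norm smallness that is false.
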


\begin{cor}\label{c:4.6}
For $\de\to 0$ and $\vep=\bet c_m\de^\bet$ one finds
\[
\|\bsi_\vep^\star-\bsi_0\|_{L_2(\Gam)}\le c\vep
\overset{\de\to0}{\longrightarrow} 0
\]
with some constant $c$, independent of $\bsi_0$ and
$\vep>0$, where $\bsi_\vep^\star$ is the minimizer
\eqref{eq:4.51}. Moreover, it holds
\begin{equation}\label{eq:explosion}
\J_\de (\bsi_\vep^\star) =
\big(\V_\bet(\bsi_\vep^\star),\bsi_\vep^\star\big)_{L_2(\Gam)}-P_\de\p(\bsi_\vep^\star)
+\f{1}{\bet}\,\f{1}{c_m}\de^{-\bet}\|\bsi_\vep^\star\|^2_{L_2(\Gam)}
\overset{\delta\to 0}{\longrightarrow}\infty.
\end{equation}
\end{cor}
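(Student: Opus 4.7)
The plan is to read off both claims directly from the asymptotic expansion \eqref{eq:expansion} established in Theorem~\ref{th:4.5}, combined with the identity \eqref{eq:connection} and the $L_2$--versus--$H^{\bet/2}$ control of $\bP_\de\p$ from Lemma~\ref{l:4.3}. No new machinery is needed.

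For the first statement I would mirror the proof of Corollary~\ref{th:4.2}. The uniform bound $\para\bsi_j\para_{\cH_\vep}\le c$ ($j=0,1,2$) gives, in particular, $\|\bsi_j\|_{L_2(\Gam)}\le\para\bsi_j\para_{\cH_\vep}\le c$ uniformly in $\vep$, so the triangle inequality yields
\[
\|\bsi_\vep^\star-\bsi_0\|_{L_2(\Gam)}\le \vep\|\bsi_1\|_{L_2(\Gam)}+\vep^2\|\bsi_2\|_{L_2(\Gam)}\le c(\vep+\vep^2)\le 2c\vep \overset{\de\to 0}{\longrightarrow}0,
\]
since $\vep=\bet c_m\de^\bet\to 0$.

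For the second statement I would start from the decomposition \eqref{eq:connection}, so that
\[
\J_\de(\bsi_\vep^\star)=(V_\bet\bsi_\vep^\star,\bsi_\vep^\star)_{L_2(\Gam)}-2(\bof,\bsi_\vep^\star)_{L_2(\Gam)}-\bP_\de\p(\bsi_\vep^\star)+\f{1}{\bet c_m}\de^{-\bet}\|\bsi_\vep^\star\|^2_{L_2(\Gam)},
\]
and then argue that only the last summand is unbounded. From \eqref{eq:4.5} together with \eqref{eq:4kreuz}, the leading order term $\bsi_0$ is strictly positive on each $\Gam_k$, so $\|\bsi_0\|_{L_2(\Gam)}^2=:C_0>0$. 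Combined with the first part, $\|\bsi_\vep^\star\|_{L_2(\Gam)}^2\to C_0$, and hence $\f{1}{\bet c_m}\de^{-\bet}\|\bsi_\vep^\star\|_{L_2(\Gam)}^2\to\infty$.

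It remains to check that the other three summands stay bounded as $\de\to 0$. The terms $\bsi_0\in H^{\f{3}{2}\bet}(\Gam)$ and $\bsi_1\in H^{\bet/2}(\Gam)$ are independent of $\vep$ and bounded in $H^{\bet/2}(\Gam)$, while from $\para\bsi_2\para_{\cH_\vep}\le c$ and the G\aa rding estimate \eqref{eq:2.2} one extracts $\|\bsi_2\|_{H^{\bet/2}(\Gam)}\le c\vep^{-1/2}$; hence $\vep^2\|\bsi_2\|_{H^{\bet/2}(\Gam)}\le c\vep^{3/2}\to 0$, and $\|\bsi_\vep^\star\|_{H^{\bet/2}(\Gam)}$ stays bounded uniformly in $\vep$. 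Theorem~\ref{th:2.1} then bounds $(V_\bet\bsi_\vep^\star,\bsi_\vep^\star)_{L_2(\Gam)}$, Lemma~\ref{l:4.3} bounds $|\bP_\de\p(\bsi_\vep^\star)|$ by the same $H^{\bet/2}$-norm, and $|(\bof,\bsi_\vep^\star)_{L_2(\Gam)}|$ is dominated by Cauchy--Schwarz. Inserting these bounds into the identity above yields \eqref{eq:explosion}. The only mildly delicate point, and the one I would check with care, is the uniform $H^{\bet/2}$-bound on $\bsi_\vep^\star$; everything else is a direct combination of estimates already at our disposal.
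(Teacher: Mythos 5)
Your proof is correct and takes essentially the paper's route: the first claim is read off from the expansion \eqref{eq:expansion} exactly as in Corollary \ref{th:4.2}, and the second claim rests, as in the paper, on the identity \eqref{eq:connection} together with the observation that $\si_0^k>0$ (from \eqref{eq:4.5} and \eqref{eq:4kreuz}) forces $\|\bsi_\vep^\star\|_{L_2(\Gam)}\ge\tfrac12\|\bsi_0\|_{L_2(\Gam)}>0$ for small $\vep$, so the term $\f{1}{\bet c_m}\de^{-\bet}\|\bsi_\vep^\star\|^2_{L_2(\Gam)}$ blows up. The one genuine difference is how the remaining terms are controlled: the paper disposes of them in one line by the lower bound $-c\para\bsi_\vep^\star\para^2_{\cH_\vep}$ with $\para\bsi_\vep^\star\para_{\cH_\vep}$ uniformly bounded, which silently presupposes that $\bP_\de\p(\bsi_\vep^\star)$ (estimated in Lemma \ref{l:4.3} only through the $H^{\bet/2}$-norm) is under control; you instead extract $\|\bsi_2\|_{H^{\bet/2}(\Gam)}\le c\,\vep^{-1/2}$ from $\para\bsi_2\para_{\cH_\vep}\le c$ and the G{\aa}rding inequality \eqref{eq:2.2}, obtaining a uniform $H^{\bet/2}(\Gam)$-bound on $\bsi_\vep^\star$ and then bounding $(V_\bet\bsi_\vep^\star,\bsi_\vep^\star)_{L_2(\Gam)}$, $\bP_\de\p(\bsi_\vep^\star)$ and $(\bof,\bsi_\vep^\star)_{L_2(\Gam)}$ term by term. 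This is slightly more work but actually makes explicit a detail the paper glosses over, and it also accounts for the external-field term that \eqref{eq:explosion} drops. Two cosmetic remarks: in the punched problem $\bsi_1$ is not literally independent of $\vep$ (it contains $\bP_\de\pp\si_0^k$ by Theorem \ref{th:4.5}), but it is uniformly bounded in $H^{\bet/2}(\Gam)$, which is all you use; and the inequality $\|\bvfi\|_{L_2(\Gam)}\le\para\bvfi\para_{\cH_\vep}$ holds only up to a constant for $\vep$ small (since $(V_\bet\bvfi,\bvfi)_{L_2(\Gam)}$ need not be nonnegative off the cone), a caveat the paper's own proofs share and which does not affect the conclusion.
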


\begin{proof}
Due to \eqref{eq:expansion}, we obtain
\[
\|\bsi_\vep^\star-\bsi_0\|_{L_2(\Gam)}
\le
\para\bsi_\vep^\star-\bsi_0^\star\para_{\cH_\vep}\le
\vep\para\bsi_1\para_{\cH_\vep}
+\vep^2\para\bsi_2\para_{\cH_\vep}
\le
c(\vep+\vep^2).
\]
The conditions \eqref{eq:4kreuz} imply that $\si_0^k> 0$ and, hence,
$\|\bsi_\vep^\star\|_{L_2(\Gam)}\ge\f{1}{2}\|\bsi_0\|_{L_2(\Gam)}>0$ for all
$\vep$ with $0<\vep\le\vep_1$ with some $\vep_1> 0$. Thus,
there holds
\begin{align*}
\J_\de (\bsi_\vep^\star)
&=
\big(\V_\bet(\bsi_\vep^\star),\bsi_\vep^\star\big)_{L_2(\Gam)}-P_\de\p(\bsi_\vep^\star)
+\f{1}{\bet}\,\f{1}{c_m}\de^{-\bet}\|\bsi_\vep^\star\|^2_{L_2(\Gam)}\\
&\ge
\f{1}{2\bet}\,\f{1}{c_m}\de^{-\bet}\|\bsi_0\|^2_{L_2(\Gam)}-c\para\bsi_\vep^\star\para^2_{\cH_\vep}
\end{align*}
with uniformly bounded $\para\bsi_\vep^\star\para^2_{\cH_\vep}$. Hence,
$\de\to 0$ implies \eqref{eq:explosion}.
\end{proof}

\begin{rem}
For the torus $\Gam_1$ in $\R^3$, considered in \cite{dr}
and \cite{h-s}, where $\bof\in H^{\bet/2}(\Gam)$, $a^1>0$,
$g_1=1$, the minimizers $\bsi_\vep^\star$ of the punched
minimization problem tend to the constant charge $\bsi_\vep^\star\to
\bsi_0:=\f{1}{|\Gam|} a^1$.
\end{rem}

\appendix
\section{Explicit calculation of particular partie finie integrals}\label{a:a}
\setcounter{equation}{0}
\def\theequation{A.\arabic{equation}}
\setcounter{theorem}{0}
\def\thetheorem{A.\arabic{theorem}}
In this appendix, we shall compute the partie finie integrals
which define the functions $h(\bx)$ and $\bh(\bx)$ from
\eqref{eq:h} and \eqref{eq:bold h}, respectively.

\begin{lem}\label{l:a.1}
(i) Let $-1<\al<1$, $\Gam\in C^\8$ and $\bvfi\in C^\8(\Gam)$.
Then, one has
\[
\begin{gathered}
\lim_{\de\to 0} \pf \intli_{|\bx-\by|>\de>0}\ |\bx-\by|^{-\bet-(n-1)}
\big\{\bvfi(\by)-\bvfi(\bx)\big\} \d s_\by\\
=\pv \intli_{\Gam\sm\{\bx\}}|\bx-\by|^{-\bet-(n-1)}
\big\{\bvfi(\by)-\bvfi(\bx)\big\} \d s_\by,
\end{gathered}
\]
where $\pv$ denotes the Mikhlin--Calderon principal value integral.

(ii) The function $h(\bx)$ from \eqref{eq:h} is given by
\begin{equation}\label{eq:A.2}
\begin{gathered}
h(\bx)= \intli_{\by\in\Gam\wedge|\bx-\by|\le c}\
\big\{|\bx-\by|^{-\bet-(n-1)}-r^{-\bet-(n-1)}\big\}
\d s_\by-(\bet  c_n)^{-1}c^{-\bet}\\
-\pv \intli_{\by\in\Gam\wedge|\bx-\by|\le c}\ r^{-\bet-(n-1)}
\big\{r^{n-2}\d r\d\bom-\d s_\by\big\}\\
+\intli_{\by\in\Gam\wedge|\bx-\by|\ge c} |\bx-\by|^{-\bet-(n-1)}\d s_\by
\end{gathered}
\end{equation}
with any $c>0$ sufficiently small.

(iii) For the function $\bh(\bx)$ from \eqref{eq:bold h},
one obtains
\begin{equation}\label{eq:A.3}
\begin{gathered}
\bh(\bx)= \pv\intli_{\by\in\Gam\wedge 0<|\bx-\by|\le c}\
\big\{|\bx-\by|^{-\bet-(n-1)}(\by-\bx)
-r^{-\bet-(n-1)}(\by-\bx) \big\}\d s_\by\\
-\pv\intli_{\by\in\Gam\wedge |\bx-\by|\le c}\ r^{-\bet-(n-1)}
\big\{(\by-\bx)r^{n-2}\d r\d\bom-(\by-\bx)\d s_\by\big\}\\
+ \intli_{\by\in\Gam\wedge 0<|\bx-\by|\ge c}\
|\bx-\by|^{-\bet-(n-1)}(\by-\bx)\d s_\by
\end{gathered}
\end{equation}
with any $c>0$ sufficiently small.
\end{lem}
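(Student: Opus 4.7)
\medskip
\noindent\textbf{Overall approach.} The plan is to reduce every integral in the statement to a local polar-coordinate integral near $\bx\in\Gam$, using Martensen's surface polar coordinates from Appendix~\ref{a:b}. Those coordinates supply a parametrisation $(r,\bTheta)\mapsto\by(r,\bTheta)$ centred at $\bx$, with $\bTheta$ ranging over the unit sphere in $\R^{n-1}$, such that
\[
\d s_\by=\bigl(r^{n-2}+a(\bx,r,\bTheta)\,r^{n}\bigr)\,\d r\wedge\d\bom,\qquad
|\bx-\by|^{2}=r^{2}\bigl(1+O(r^{2})\bigr),
\]
by (B5) and (B9). Consequently the polar integrand splits as
\[
|\bx-\by|^{-\bet-(n-1)}\,\d s_\by = r^{-\bet-1}\,\d r\wedge\d\bom + \text{a remainder of polar order } r^{1-\bet}\,\d r\wedge\d\bom,
\]
with the remainder weakly singular for $\bet<2$.

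\medskip
\noindent\textbf{Part (i).} For $\bvfi\in C^\8(\Gam)$, I would Taylor expand in Martensen coordinates to get $\bvfi(\by)-\bvfi(\bx)=r\,\bTheta\cd\na\bvfi(\bx)+O(r^{2})$, in which $r\bTheta$ is the tangential part of $\by-\bx$. Multiplying by $r^{-\bet-1}\,\d r\wedge\d\bom$, the leading piece is $r^{-\bet}\,\bTheta\cd\na\bvfi(\bx)\,\d r\wedge\d\bom$. The Mikhlin cancellation $\intli_{|\bTheta|=1}\bTheta\,\d\bom=\bnull$ then annihilates this piece after integration in $\bom$ first, so the $r^{-\bet}$ singularity disappears in the symmetric (Calderon--Mikhlin) sense. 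The remaining terms are of polar order at most $r^{1-\bet}$, which is integrable since $\bet<2$. In particular no divergent $\de^{-k}$ contribution survives, so $\pf\lim_{\de\to 0}=\pv$ as claimed.

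\medskip
\noindent\textbf{Parts (ii) and (iii).} For $h(\bx)$, I would split the domain at a fixed small radius $c>0$: on $|\bx-\by|>c$ the integral is smooth and supplies the last term of (A.2). On $|\bx-\by|\le c$ I rewrite
\[
|\bx-\by|^{-\bet-(n-1)}\d s_\by
=\bigl\{|\bx-\by|^{-\bet-(n-1)}-r^{-\bet-(n-1)}\bigr\}\d s_\by
+r^{-\bet-(n-1)}\bigl\{\d s_\by-r^{n-2}\,\d r\wedge\d\bom\bigr\}
+r^{-\bet-1}\d r\wedge\d\bom.
\]
The first two brackets are integrable in the principal-value sense thanks to the $O(r^{2})$ nature of both discrepancies, yielding the first two lines of (A.2). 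The third, reference, piece computes explicitly as $\intli_\de^c r^{-\bet-1}\,\d r$ times the area of the unit sphere in $\R^{n-1}$; identifying this area with $1/c_n$ (paper's normalisation) and taking $\pf$ removes the $\de^{-\bet}$ divergence and leaves the constant $-(\bet c_n)^{-1}c^{-\bet}$. Part (iii) proceeds by the identical near/far split with an extra factor $(\by-\bx)$; here the angular average of $\bTheta$ is already zero, so the analogue of the reference piece contributes nothing divergent, there is no explicit constant to subtract, and (A.3) falls out directly.

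\medskip
\noindent\textbf{Main obstacle.} The delicate point is part (i) in the range $1\le\bet<2$ (i.e.\ $-1<\al\le 0$), where the first-order term $r^{-\bet}\bTheta\cd\na\bvfi(\bx)$ is not absolutely integrable in $r$. The symmetric cutoff $|\bx-\by|>\de$ is \emph{not} a symmetric cutoff in the Martensen radius $r$, because $|\bx-\by|$ and $r$ differ by an $O(r^{3})$ correction coming from (B9). One must verify that the perturbation of the level set $\{|\bx-\by|=\de\}$ relative to $\{r=\de\}$ only shifts the angular-first cancellation by contributions whose polar order is $\ge 1-\bet$, hence integrable. This is precisely what the pseudohomogeneous Martensen expansion of $|\bx-\by|^{2}$ in Appendix~\ref{a:b} is designed to deliver, and once it is invoked the remaining bookkeeping in (ii) and (iii) is mechanical.
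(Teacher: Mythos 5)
Your overall route---localize at $\bx$ in Martensen's surface polar coordinates, Taylor-expand $\bvfi$, remove the first-order term by the angular cancellation $\intli_{|\bTheta|=1}\bTheta\,\d\bom=\bnull$, and, for (ii) and (iii), split at a fixed radius $c$ and evaluate the radial reference integral explicitly---is the same as the paper's. Your parts (ii) and (iii) match the paper's computation, including the constant $-(\bet c_n)^{-1}c^{-\bet}$ obtained from $\intli_{|\bTheta|=1}\d\bom=c_n^{-1}$ and the observation that in (iii) the reference piece dies because the angular average of $\bTheta$ vanishes.

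However, the step you yourself single out as the crux---part (i) for $1\le\bet<2$---is left unproved and rests on a misreading of Appendix~\ref{a:b}. You assert $|\bx-\by|^2=r^2\bigl(1+\cO(r^2)\bigr)$ and then worry that the truncation $|\bx-\by|>\de$ is not a truncation in the polar radius, deferring the required cancellation estimate to ``the pseudohomogeneous expansion of $|\bx-\by|^2$.'' In Martensen's coordinates there is no such discrepancy: the radial parameter is \emph{by definition} the Euclidean distance, since the level sets are $\cC_\vr=\{\by\in\Gam\,:\,|\by-\bx|=\vr\}$ and Theorem~\ref{t:b1} gives $A=s=\vr$ along each radial ray. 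Hence $r\equiv|\bx-\by|$ exactly, the cutoff $|\bx-\by|>\de$ \emph{is} the cutoff $r>\de$, and for each fixed $\de$ one may integrate in $\bom$ first: writing $(\by-\bx)=r\,\bTheta(r,\bom)$ with $\bTheta(r,\bom)=\bTheta(0,\bom)+\cO(r)$, the part with $\bTheta(0,\bom)$ is annihilated by $\intli_{|\bTheta|=1}\bTheta(0,\bom)\,\d\bom=\bnull$, while the $\cO(r)$ correction (together with the Taylor remainder and the $\cO(r^2)$ correction in $\d s_\by$) yields an integrand of order $r^{1-\bet}$ with $-1<1-\bet<1$, whose principal value exists; this is exactly how the paper closes part (i). The same exact identity $r=|\bx-\by|$ is what makes the brackets $\bigl\{|\bx-\by|^{-\bet-(n-1)}-r^{-\bet-(n-1)}\bigr\}$ in \eqref{eq:A.2} and \eqref{eq:A.3} vanish identically (the paper says so explicitly), whereas you treat them as genuine $\cO(r^2)$ discrepancies. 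Were $r$ a geodesic radius, as your formula suggests, you would indeed owe an estimate of the $\cO(\de^3)$-thick annulus between the two cutoffs where the non-absolutely-integrable first-order term lives, and your text supplies none. So, as written, the proposal has a gap at its central point, but it closes immediately once Martensen's coordinates are used as actually constructed in Appendix~\ref{a:b}.
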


\begin{proof}
\emph{(i)\/}
Locally on $\Gam$ one has near $\bx\in\Gam$:
\begin{align*}
\bvfi(\by)
&=
\bvfi(\bx)+(\by-\bx)\cd\na \bvfi(\bx)+\cO(r^2),\
  r=|\bx-\by|;\\
\mathbf{\Theta}(r,\bom)
&:=
 \f{1}{r}(\by-\bx) \text{for}\bx, \by\in\Gam\secol \\
\bTheta(r,\bom)
&=
\bTheta(0,\bom)+\cO(r),\
\intli_{|\Theta|=1}\mathbf{\Theta}(0,\bom)\d\bom=0,\\
 \d s_\by
&=
r^{n-2}(1+\cO(r^2))\d r \d\bom.
\end{align*}
$|\bTheta (0,\bom)|=1$ describes the $(n-2)$-dimensional unit
sphere
$\bbS^{n-2}$
and $\bom$ is its polar coordinate with $\d\bom$
its $(n-2)$-dimensional surface measure.
Consequently, with an appropriate constant $c>0$ depending
on $\Gam$, one has
\begin{align*}
&\lim\limits_{\de\to 0}\pf\intli_{\by\in\Gam\wedge 0<\de<|\bx-\by|}
  |\bx-\by|^{-\bet-(n-1)}
\big\{\bvfi(\by)-\bvfi(\bx)\big\}\d s_\by\\
&\quad=\lim\limits_{\de\to 0+}\pf\intli_{\by\in\Gam\wedge 0<\de<|\bx-\by|\le
c}\
 |\bx-\by|^{-\bet-(n-1)}\\
 &\hspace*{3cm}\times\Big\{(\by-\bx)\cd\na\bvfi(\bx)
+\big\{\bvfi(\by)-\bvfi(\bx)-(\by-\bx)\cd\na\bvfi(\bx)\big\}\Big\}\d s_\by\\
&\qquad+\intli_{\by\in\Gam\wedge|\bx-\by|\ge c}\ |\bx-\by|^{-\bet-(n-1)}
\big\{\bvfi(\by)-\bvfi(\bx)\big\}\d s_\by\\
&\quad=\lim\limits_{\de\to 0+}\pf\intli_{0<\de\le r\le c}\
r^{-\bet}\d r
\intli_{|\bTheta|=1}\bTheta\d\bom(\bTheta)\cd\na\bvfi(\bx)\\
&\qquad+\pv\bigg\{\intli_{0<|\bx-\by|\le c} \big\{|\bx-\by|^{-\bet-(n-1)}
(\by-\bx) \d s_\by -r^{-\bet-1}(\by-\bx)\d r\d\bom\big\}\\
&\qquad+\intli_{0<|\bx-\by|\le c}   |\bx-\by|^{-\bet-(n-1)}
\big\{\bvfi(\by)-\bvfi(\bx)-(\by-\bx)\na\bvfi(\bx)\cd \d s_\by\big\}\bigg\}\\
&\qquad+\intli_{\by\in\Gam\wedge|\bx-\by|\ge c}|\bx-\by|^{-\bet-(n-1)}
\big\{\bvfi(\by)-\bvfi(\bx)\big\} \d s_\by.
\end{align*}
The first integral on the right is zero because of
$\int_{|\bTheta|=1}\bTheta\d\bom=\bnull$. Whereas, the integrand
of the remaining integral has at $\by=\bx$ a weak singularity
$|\by-\bx|^{-\bet+1}$ with $-1<-\bet+1<1$ whose principal value
integral exists. This follows by using e.g.\ Martensen's surface
polar coordinates, cf.~Theorem \ref{t:b1} in Appendix~\ref{a:b}.

\bigskip
\emph{(ii)\/} For the function
\[
h(\bx)
=\lim_{\de\to 0}\pf \intli_{0<\de<|\bx-\by|\wedge \by\in\Gam}
|\bx-\by|^{-\bet-(n-1)}\d s_\by,
\]
we find
\begin{align*}
h(\bx) &=\lim_{\de\to 0}\pf  \intli_{0<\de<|\bx-\by|\le c}
\big\{|\bx-\by|^{-\bet-(n-1)}-r^{-\bet-(n-1)}\big\}\d s_\by\\
&\qquad+\lim_{\de\to 0}\pf \intli_{0<\de}^c r^{-\bet-(n-1)}
r^{n-2}\d r\intli_{|\bTheta|=1}\d\bom\\
&\qquad-\lim_{\de\to 0}\pf\intli_{0<\de}^c\ \intli_{|\bTheta|=1}
r^{-\bet-(n-1)}\big\{r^{n-2}\d r\d\bom-\d s_\by\big\}\\
&\qquad
+\intli_{c\le |\bx-\by|\wedge\by\in\Gam}
|\bx-\by|^{-\bet-(n-1)}\d s_\by.
\end{align*}
If we choose Martensen's surface polar coordinates on $\Gam$,
then the integrand of the first integral on the right is identical zero
due to $|\bx-\by|=r$. For the second integral on the right, we have
with $\int_{|\bTheta|=1}\d\bom=c^{-1}_n$ that
\[
\lim_{0<\de\to 0}
\pf\intli_\de^c r^{\bet-1}\d r \intli_{|\bTheta|=1}\d\bom =
\lim_{0<\de\to 0}
\pf \f{1}{c_n}\bigg\{\f{\de^{-\bet}}{\bet}-\f{1}{\bet}c^{-\bet}\bigg\}
=-(c_n\bet)^{-1} c^{-\bet}.
\]
For the third integral, Theorem \ref{t:3.3} in
Appendix~\ref{a:b} implies
\[
r^{n-2} \d r\d\bom -\d s_\by =\cO(r^n) \d r\d\bom.
\]
Hence, the integrand is of order $\cO(r^{-\bet+1})$ and
the integral exists as a principal value integral. Collecting
these properties proves \eqref{eq:A.2}.

\bigskip
\emph{(iii)\/}
We proceed with respect to the vector-valued function
\[
\bh(\bx)=\pf\lim_{\de\to 0}\intli_{\by\in\Gam\wedge 0<\de<|\bx-\by|}
|\bx-\by|^{-\bet-(n-1)}(\by-\bx)\d s_\by,
\]
in the same manner as for $h(\bx)$. Inserting
$(\by-\bx)=r\bTheta$ for Martensen's surface
polar coordinates, we have:
\begin{align*}
\bh(\bx)
=&
\lim_{\de\to 0} \pf \intli_{0<\de<|\bx-\by|\le c} \big\{
|\bx-\by|^{-\bet-(n-1)}(\by-\bx)-r^{-\bet-(n-1)}(\by-\bx)\big\}\d s_\by\\
\quad&+
\lim_{\de\to 0}\pf\intli_{0<\de}^c r^{-\bet-(n-1)}r^{n-1}
\d r\intli_{|\bTheta|=1}\bTheta\d\bom\\
\quad&-
\lim_{\de\to 0}\pf\intli_\de^c\ \intli_{|\bTheta|=1} r^{-\bet-n}
\bTheta\big\{r^{n-2}\d r\d\bom-\d s_\by\big\}\\
\quad&+
\intli_{0\le |\bx-\by|\wedge\by\in\Gam}\
|\bx-\by|^{-\bet-(n-1)}(\by-\bx)\d s_\by.
\end{align*}
Here, the first integral on the right vanishes if Martensen's surface
polar coordinates are used, and the second one vanishes because of
$\int_{|\bTheta|=1}\bTheta \d\bom=\bnull$. The third integral contains an
integrand of order $\cO(r^{\bet+2})$. Hence, the integrand is bounded
and the integral exists. Consequently, also \eqref{eq:A.3}
holds.
\end{proof}

\section{Martensen's surface polar
  coordinates}\label{a:b}
\setcounter{equation}{0}
\def\theequation{B.\arabic{equation}}
\setcounter{theorem}{0}
\def\thetheorem{B.\arabic{theorem}}
For $n=3$, surface polar coordinates have been introduced by Martensen
in \cite[Chapter 2.1]{Ma}. A graphical illustration of these coordinates are
found in Figure~\ref{fig:sphere}. We generalize this approach to arbitrary
spatial dimension $n\ge 2$.

\begin{figure}[hbt]
\begin{center}
\includegraphics[height=4.2cm,width=6cm]{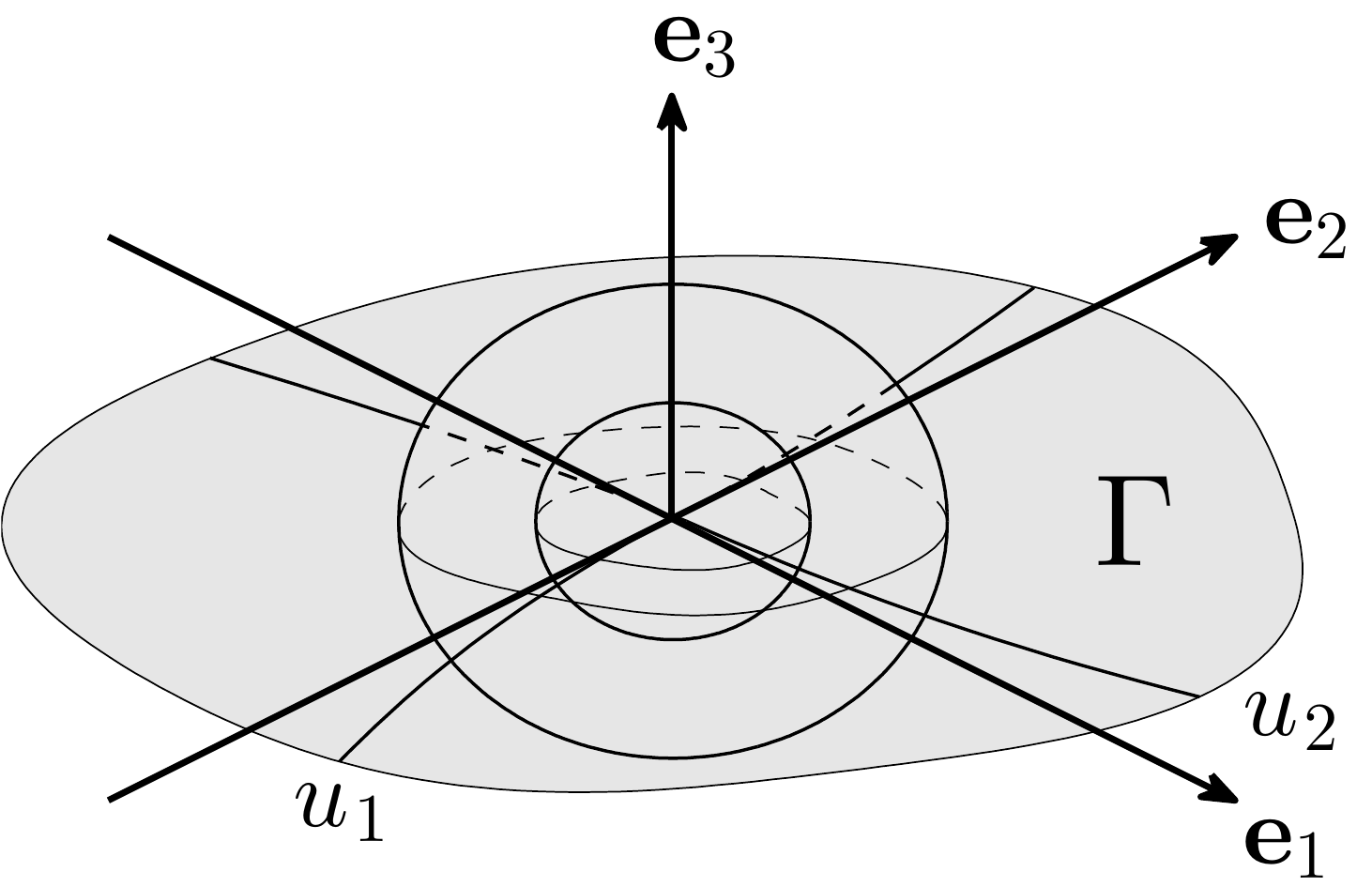}
\caption{\label{fig:sphere}Illustration of Martensen's surface polar coordinates.}
\end{center}
\end{figure}

The given surface has locally the parametric representation
$\Gam\dopu\bx=\bx(\bu)\in\R^n$,  $\bu=(u^1,\dots,u^{n-1})\in\R^{n-1}$.
For $\bx=\overset{\circ}{\bx} =\bx(\overset{\circ}{\bu})$, define the family
of $(n-2)$-dimensional manifolds (level sets) as
$\{\bx\in\cC_\vr\sbs\Gam$ given by $|\bx-\overset{\circ}{\bx}|=\vr>0\}$,
where $|\bx-\overset{\circ}{\bx}|$ denotes the Euclidian distance in
$\R^n$ and $\vr$ is the radial parameter.
Let
\begin{align*}
A(\bu)
&:=
|\bx(\bu)-\overset{\circ}{\bx}|,\\[-.5cm]
\intertext{then}
A^2(\bu)
&=
\big(\bx(\bu)-\overset{\circ}{\bx}\big)\cd\big(\bx(\bu)-\overset{\circ}{\bx}\big),
\end{align*}
where $\cd$ denotes the Euclidian scalar product in $\R^n$.
In the local neighborhood of $\Gam$, define
\[
\bx(\bu)=\overset{\circ}{\bx}+\bF(\bu)+G(\bu)\bn(\bu)
\]
with $G(\bu)=\big(\bx(\bu)-\overset{\circ}{\bx} \big)\cd \bn(\bu)$, where
$\bn(\bu)$ denotes the (exterior) unit normal vector of $\Gam$ at
$\bx=\bx(\bu)$. Then, on $\Gam$, the vector $\bF(\bu)$ is
tangential to $\Gam$ at $\bx(\bu)$,
\begin{equation}\label{eq:b1}
\bF\cd\bn=0,\ \bF
=
\big(\bx(\bu)-\overset{\circ}{\bx}\big)-G(\bu)\bn(\bu)\ \text{and}\
A^2
=
\bF\cd \bF+G^2.
\end{equation}

For $\overset{\circ}{\bx}\in\Gam$ chosen, the surface polar coordinates
then locally are given by the family of closed surfaces $\cC_\vr$, i.e.,
the level sets of the function $A(\bu)$, and curved radial rays
on $\Gam$ through $\overset{\circ}{\bx}$ which are perpendicular to
$\cC_\vr$ for constant $\vr$. Let us denote such a radial ray curve by
$\bc_{\bTheta}\in\Gam$, given by $\bu=\bu(s)$, $0\le s$, where the
parametric representation $\bx\big(\bu(s)\big)$ at $s=0$ starts in
$\overset{\circ}{\bx} =\bx(\overset{\circ}{\bu})$ in the direction of
the unit vector $\be(\bTheta)=\bx_{|i}(\overset{\circ}{\bu})\Theta^i$,
$\bx_{|i}=\f{\pa\bx}{\pa u^i}$, $i=1,\dots,n-1\secol$
$g_{jk}=\bx_{|j}\cd \bx_{|k}$, $g_{jk}\Theta^j\Theta^k=1$. Hence,
for the curve $\bc_{\bTheta}\dopu \bx=\bx\big(\bu(s)\big)$, we require that
\[
\bx_{|j}(\overset{\circ}{\bu})\f{\d u^j}{\d s}(\overset{\circ}{\bu})=\be(\bTheta).
\]
(Note that for $n=3$ one usually uses $\Theta_1=\cos \zeta$ and
$\Theta_2=\sin\zeta$.)

Without restriction of generality, we assume in what follows that at
$\os{\circ}{\bx}$ we have $g_{jk}(\os{\circ}{\bu})=\de_{jk}$ (the
Kronecker tensor).
Since the radial ray curves $\bc_{\bTheta}\in\Gam$ are perpendicular to
the level sets, which implicitly are given by $A=\vr=$ const, they satisfy the
ordinary differential equations for fixed $\bTheta$:
\begin{equation}\label{eq:b2}
\f{\pa\bc_{\bTheta}}{\pa s}=\f{\d\bx}{\d s}\Big|_{\bc_{\bTheta}}=
\bx_{|j} \f{\d u^j}{\d s}= \f{\Gr A}{|\Gr A|^2}(\bu)
\end{equation}
where $\Gr A= g^{\ell k}A_{|\ell}\bx_{|k}$
is the surface gradient and $g^{\ell k}g_{jk}=\de_{j\ell}$.
Since \eqref{eq:b1} implies on $\Gam$ that
\[
AA_{|i}=
(\bx-\overset{\circ}{\bx})\cd \bx_{|i}=F_i
=\bF\cd\bx_{|i}\text{and}
\Gr A
=
g^{\ell k}\f{F_\ell}{A} \bx_{|k} =\f{1}{A} F^k \bx_{|k}=\f{1}{A}\bF,
\]
we find
\[
\Gr A\cd \Gr A= \f{1}{A^2} \bF\cd\bF= \f{1}{A^2}(A^2-G^2)
=1-\f{G^2}{A^2}
\]
and the differential equations \eqref{eq:b2} take the form
\begin{equation}\label{eq:b3a}
\f{\d\bx}{\d s}\Big|_{\bc_{\bTheta}}=\bx_{|j} \f{\d u^j}{\d s}=
\f{\Gr A}{|\Gr A|^2} \Big(1-\f{G^2}{A^2}(\bu)\Big)^{-1}.
\end{equation}
After multiplication with $\bx_{|k}g^{\ell k}$, we thus arrive at
\begin{equation}\label{eq:b4}
\f{\d u}{\d s}^\ell =\f{1}{A(\bu)} \Big(1-\f{G^2}{A^2}(\bu)\Big)^{-1}
F^\ell(\bu),\  F^\ell=g^{\ell k}\bF\cd\bx_{|k},\ \ell,k=1,\dots,n-1.
\end{equation}

\begin{theorem}\label{t:b1}
To every $\overset{\circ}{\bx}\in\Gam$ there exists a
neighborhood of $\overset{\circ}{\bx}$ on $\Gam$ where
the system \eqref{eq:b4} admits a unique solution
\[
u^\ell (s,\bTheta)=s\Theta^\ell +\cO(s^2)
\]
for $s\ge 0$ which is the solution of the Volterra integral
equations
\begin{equation}\label{eq:b5}
u^\ell (s,\bTheta)=\overset{\circ}{u}^\ell+ s\Theta^\ell+
\intli_0^s
\bigg\{\f{F^\ell\big(\bu(\si,\bTheta)\big)}{A\big(\bu(\si,\bTheta)\big)}
\Big(1-\f{G^2\big(\bu(\si,\bTheta)\big)}{A^2\big(\bu(\si,\bTheta)\big)}\Big)^{-1}
-\Theta^\ell\bigg\}\d\si.
\end{equation}
The transformation $\bu\mapsto(\vr,\bTheta)$ to surface polar coordinates
about $\overset{\circ}{\bx}$ is given by
$\bu(\vr,\bTheta)$, i.e., $s=\vr\ge 0$.
\end{theorem}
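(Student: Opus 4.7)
The principal difficulty is the singularity of the ODE system \eqref{eq:b4} at $s=0$: both $F^\ell(\bu)$ and $A(\bu)$ vanish at $\bu = \os{\circ}{\bu}$, so the right-hand side $F^\ell/A \cd (1 - G^2/A^2)^{-1}$ is a $0/0$ indeterminate form and standard Picard--Lindel\"of does not apply directly. The plan is instead to solve the Volterra integral equation \eqref{eq:b5} -- in which the problematic leading value $\Theta^\ell$ has been subtracted inside the integrand, so that the integrand vanishes as $\sigma\to 0$ -- and then to recover \eqref{eq:b4} by differentiation.

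To set up the fixed-point argument I would first derive the local Taylor expansions about $\os{\circ}{\bu}$. Using $g_{jk}(\os{\circ}{\bu}) = \de_{jk}$ together with the tangency relation $\bn(\os{\circ}{\bu})\cd\bx_{|j}(\os{\circ}{\bu}) = 0$ one obtains
\begin{align*}
F^\ell(\bu) &= (u^\ell - \os{\circ}{u}^\ell) + \cO\bigl(|\bu - \os{\circ}{\bu}|^2\bigr),\\
A(\bu) &= |\bu - \os{\circ}{\bu}| + \cO\bigl(|\bu - \os{\circ}{\bu}|^2\bigr),\\
G(\bu) &= \cO\bigl(|\bu - \os{\circ}{\bu}|^2\bigr).
\end{align*}
With the ansatz $\bu(\sigma, \bTheta) = \os{\circ}{\bu} + \sigma\bTheta + \bw(\sigma)$ subject to $|\bw(\sigma)| \le M\sigma^2$, these expansions yield the pointwise bound $|K^\ell| = \cO(\sigma)$ on the integrand
\[
K^\ell(\sigma, \bTheta, \bw) := \f{F^\ell(\bu)}{A(\bu)}\Big(1 - \f{G^2(\bu)}{A^2(\bu)}\Big)^{-1} - \Theta^\ell,
\]
together with a Lipschitz estimate in $\bw$ of order $\cO(1/\sigma)$. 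The Banach contraction principle may then be applied to the operator
\[
\mathcal{T}[\bw](s) := \intli_0^s K\bigl(\sigma, \bTheta, \bw(\sigma)\bigr)\,\d\sigma
\]
on the closed set $\mathcal{B} := \{\bw \in C([0, s_0]; \R^{n-1}) \dopu |\bw(\sigma)| \le M\sigma^2\}$ equipped with the weighted norm $\|\bw\|_\star := \sup_{0<\sigma\le s_0}|\bw(\sigma)|/\sigma^2$. The pointwise bound on $|K|$ yields $\mathcal{T}(\mathcal{B}) \sbs \mathcal{B}$ for $M$ sufficiently large, while the weighted norm converts the $\cO(1/\sigma)$ Lipschitz constant into an integrable bound and hence a contraction after shrinking $s_0$ if necessary. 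The unique fixed point $\bw^\star$ furnishes the unique solution $u^\ell(s, \bTheta) = \os{\circ}{u}^\ell + s\Theta^\ell + w^{\star\,\ell}(s, \bTheta)$ of \eqref{eq:b5} with the claimed asymptotic; smooth dependence on $\bTheta$ follows from Banach's theorem with parameters and the smoothness of $K$ for $\sigma > 0$, and differentiating \eqref{eq:b5} recovers \eqref{eq:b4}.

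Finally, the identification of the curve parameter $s$ with the Euclidean radius $\vr = A(\bu)$ is immediate from \eqref{eq:b2}: along any radial ray,
\[
\f{\d A}{\d s} = A_{|i}\,\f{\d u^i}{\d s} = \Gr A \cd \f{\d\bx}{\d s}\Big|_{\bc_{\bTheta}} = \Gr A \cd \f{\Gr A}{|\Gr A|^2} = 1,
\]
which together with $A\bigl(\bu(0, \bTheta)\bigr) = 0$ gives $A\bigl(\bu(s, \bTheta)\bigr) = s$, so $\vr = s$. The main technical obstacle, as already indicated, is the $1/\sigma$ singularity in the Lipschitz constant of the integrand near $s=0$; it is precisely overcome by the geometric cancellation of $\Theta^\ell$ in the definition of $K$, which forces the quadratic decay of the correction $\bw$, combined with a $\sigma^2$-weighted norm tailored to this rate.
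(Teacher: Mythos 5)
Your proof is correct and is essentially the paper's argument: both use the Taylor expansions of $\bF$, $G$ and $A$ about $\os{\circ}{\bx}$ (with $g_{jk}(\os{\circ}{\bu})=\de_{jk}$) to see that the subtraction of $\Theta^\ell$ makes the Volterra integrand in \eqref{eq:b5} continuous and of size $\cO(\sigma)$, then solve \eqref{eq:b5} (the paper simply invokes this continuity and later bootstraps to obtain the explicit expansion of $u^\ell$, while you spell out a weighted fixed-point argument), and both identify $s=\vr$ from $\f{\d A}{\d s}=1$. One small point to tighten in your contraction step: with the weight $\sigma^2$ the contraction factor is $C/2+\cO(s_0)$, where $C$ is the coefficient in your $\cO(1/\sigma)$ Lipschitz bound, so shrinking $s_0$ alone does not make it small; you need the additional observation that the singular part of the $\bw$-Jacobian of the integrand is $\frac{1}{|v|}\bigl(I-\frac{v}{|v|}\otimes\frac{v}{|v|}\bigr)$ with $v=\bu-\os{\circ}{\bu}$, whence $C=1+\cO(s_0)$ and the factor is about $1/2<1$ (or, alternatively, adapt the weight exponent to $C$).
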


\begin{proof}
Since
\[
\bF=
(\bx-\overset{\circ}{\bx})-\big((\bx-\overset{\circ}{\bx})\cd\bn\big)\bn,
\]
the expansion about $\overset{\circ}{\bx}
=\bx(\overset{\circ}{\bu})$ gives on the one hand
\begin{align}\label{eq:B6a}
\bx-\overset{\circ}{\bx}&=
\overset{\circ}{\bx}_{|k} \f{\d\overset{\circ}{u}}{\d s}^k s+\ha
\bigg\{\overset{\circ}{\bx}_{|k|\ell}
\f{\d\os{\circ}{u}}{\d s}^k \f{\d\os{\circ}{u}}{\d s}^\ell
+\overset{\circ}{\bx}_{|k}\f{\d^2\os{\circ}{u}}{\d s^2}^k\bigg\}s^2\no\\
&\quad+\f{1}{6}\bigg\{ \overset{\circ}{\bx}_{|k|\ell|j}\f{\d\overset{\circ}{u}}{\d s}^k\
\f{\d\overset{\circ}{u}}{\d s}^\ell\ \f{\d\overset{\circ}{u}}{\d s}^j
+3 \overset{\circ}{\bx}_{|k|\ell}\f{\d^2\overset{\circ}{u}}{\d s^2}^k
\ \f{\d\overset{\circ}{u}}{\d s}^\ell+ \bx_{|k}\f{\d^3\overset{\circ}{u}}{\d s^3}^k\bigg\}s^3
+\cO(s^4).
\end{align}
On the other hand, with the Gaussian equations
\[
\bx_{|j|k}
=\Gam_{jk}^\ell \bx_{|\ell}+L_{jk}\bn,\ \ell=1,\dots,n-1
\]
and the Weingarten relations
\[
\bn_{|j}=-L_j^m\bx_{|m},
\]
we obtain
\[
\bx_{|j|k|\ell}
=
\{\Gam_{jk|\ell}^m+\Gam_{jk}^r\Gam_{r\ell}^m-L_{jk} L_\ell^m\}
\bx_{|m}+\{L_{jk|\ell}+\Gam_{jk}^rL_{r\ell}\}\bn
\]
where $\Gam_{tj}^\ell$ are the Christoffel symbols of the second kind
of $\Gam$ at $\bx$, $\Gam_{tj|k}^\ell=\f{\pa}{\pa
  u^k}\Gam_{tj}^\ell(\bu)$, and $L_j^\ell=g^{\ell k} L_{kj}$ with
$L_{kj}$ the second fundamental form of $\Gam$ at $\bx$
(see e.g.\ \cite[p.\ 90]{ku}).
Here and in what follows, we abbreviate
\[
\overset{\circ}{\bx}_{|k}=\bx_{|k}(\overset{\circ}{\bu}),\
\overset{\circ}{\bx}_{|k|\ell}=
\bx_{|k|\ell} (\overset{\circ}{\bu})
\text{and} \f{\d\os{\circ}{u}}{\d s}^k= \f{\d u}{\d s}(\os{\circ}{u})\text{etc.}
\]
We get thus from \eqref{eq:B6a}
\begin{align}\label{eq:B7}
\bx-\overset{\circ}{\bx}&=\be(\bTheta)s+\ha
\bigg\{\Big(\overset{\circ}{\Gam}_{k\ell}^m\overset{\circ}{\bx}_{|m}
+\overset{\circ}{L}_{k\ell}\overset{\circ}{\bn}\Big)\Theta^k\Theta^\ell +
\overset{\circ}{\bx}_{|k}
\f{\d^2\overset{\circ}{u}^k}{\d s^2}\bigg\}s^2\no\\
&\quad+\f{1}{6}\bigg\{\Big(\Big[\os{\circ}{\Gam}_{jk|\ell}^t+\os{\circ}{\Gam}_{jk}^m
\os{\circ}{\Gam}_{m\ell}^t-\os{\circ}{L}_{jk}\os{\circ}{L}_\ell^t\Big]
\os{\circ}{\bx}_{|t}+
\Big[\os{\circ}{L}_{jk|\ell}+\os{\circ}{\Gam}_{jk}^m\os{\circ}{L}_{m\ell}\Big]
\os{\circ}{\bn}\Big)\Theta^j\Theta^k\Theta^\ell\\
&\quad\quad+3\Big(\os{\circ}{\Gam}_{k\ell}^t\os{\circ}{\bx}_{|t}+\os{\circ}{L}_{k\ell}
\os{\circ}{\bn}\Big)\Theta^\ell
\f{\d^2\os{\circ}{u}^k}{\d s^2}+\os{\circ}{\bx}_{|t}
\f{\d^3\os{\circ}{u}^t}{\d s^3}\bigg\} s^3+\cO(s^4).\no
\end{align}
By combining this expansion with
\[
\bn(\bu)=
\overset{\circ}{\bn}+\overset{\circ}{\bn}_{|j}
\f{\d\overset{\circ}{u}^j}{\d s}s+\ha
\bigg\{\os{\circ}{\bn}_{|j|k}\Theta^k\Theta^j+\os{\circ}{\bn}_{|j}
\f{\d^2\os{\circ}{u}^j}{\d s^2}\bigg\}s^2
 +\cO(s^3),
 \]
we arrive at
\begin{align*}
G
&=
(\bx-\overset{\circ}{\bx})\cd\bn(\bu)\\
&=-\overset{\circ}{\bx}_{|k} \cd \overset{\circ}{\bx}_{|m}
\Theta^k \overset{\circ}{L}^m_j \Theta^j s^2
+\ha \overset{\circ}{L}_{k\ell}\Theta^k\Theta^\ell s^2
-\ha L_{jk}\Theta^k\f{\d^2\os{\circ}{u}^j}{\d s^2}s^3\\
&\qquad+\bigg\{\f{1}{6}\Big(\os{\circ}{L}_{jk|\ell}
+\os{\circ}{\Gam}_{jk}^m\os{\circ}{L}_{m\ell}\Big)
-\ha\Big(\os{\circ}{L}_{jm}\os{\circ}{\Gam}_{k\ell}^m+
\os{\circ}{L}_{j|k}^\ell+\os{\circ}{L}_j^m\os{\circ}{\Gam}_{mk}^\ell\Big)\bigg\}
\Theta^j\Theta^k\Theta^\ell s^3+\cO(s^4)\\
&=-\ha  \overset{\circ}{L}_{kj}\Theta^k\Theta^j s^2
+\bigg\{\f{1}{6}\os{\circ}{L}_{jk|\ell}-\f{1}{3}\os{\circ}{L}_{jm}
\os{\circ}{\Gam}_{k\ell}^m-\ha\os{\circ}{L}_{j|k}^\ell-\ha\os{\circ}{L}_j^m
\os{\circ}{\Gam}_{mk}^\ell\bigg\}\Theta^j\Theta^k\Theta^\ell s^3\\
&\qquad-\ha L_{jk}\Theta^k\f{\d^2\os{\circ}{u}^j}{\d s^2}s^3+\cO(s^4),\\
\bF
&=
\be(\bTheta)s+\bigg\{\ha\Big(\Big[\overset{\circ}{\Gam}^m_{k\ell}\overset{\circ}{\bx}_{|m}
+\overset{\circ}{L}_{k\ell}\overset{\circ}{\bn}\Big]\Theta^k
\Theta^\ell+\overset{\circ}{\bx}_{|k}\f{\d^2\overset{\circ}{u}^k}{\d s^2}\Big)
+\ha \overset{\circ}{L}_{k\ell}\Theta^k\Theta^\ell\overset{\circ}{\bn}\bigg\}s^2+\cO(s^3),\\
A
&=
s+\cO(s^2).
\end{align*}
Hence, we conclude
\[
\f{\bF}{A}\Big(1-\f{G^2}{A^2}\Big)^{-1}=\be(\bTheta)+\cO(s)
\]
for any $C^2$-curve $\bx\big(\bu(s)\big)$ through
$\overset{\circ}{\bx}$. Consequently, the kernel function $\{\dots\}$
of the Volterra operator in \eqref{eq:b5} is continuous and there
exists a solution $\bu(s,\bTheta)$ for fixed given $\bTheta$ in some
vicinity of $\overset{\circ}{\bx}$ on $\Gam$.
This solution is in
$C^1([0,S])$ for some $S>0$ and is as many times continuously
differentiable for $s\ge 0$ as is the manifold $\Gam$.

The equation \eqref{eq:b3a} implies
\[
1=
\Gr A\cd \bx_{|j}\f{\d u}{\d s}^j = g^{\ell k}A_{|k}\bx_{|\ell}\cd\bx_{|j}
\f{\d u}{\d s}^j
=
g^{\ell k}g_{\ell j}A_{|k}\f{\d u}{\d s}^j=A_{|j}\f{\d u}{\d s}^j
=\f{\d A}{\d s}=\f{\d\vr}{\d s}.
\]
Thus, it holds $A=s=\vr$ and $\bu(\vr,\bTheta)$, the solution of
\eqref{eq:b5}, is the desired transformation $(\vr,\bTheta)\mapsto \bu$.

By bootstrapping, it follows from \eqref{eq:b4} that $\bu(\vr,\bTheta)$
is higher order differentiable up to $\vr=0$. To see this, consider
the Taylor expansions of the left and the right hand sides of the
equations \eqref{eq:b4} about $\overset{\circ}{\bu}$ up to the
order two:

\bigskip
\noindent
\underline{Left hand side of \eqref{eq:b4}:} (by using \eqref{eq:b3a})
\begin{align*}
&\f{\d u}{\d\vr}^\ell \bx_{|\ell} (\bu)
=
\f{\d\os{\circ}{u}}{\d\vr}^\ell \os{\circ}{\bx}_{|\ell} +
\bigg\{\overset{\circ}{\bx}_{|\ell}\f{\d^2\overset{\circ}{u}}
{\d\vr^2}^\ell +\overset{\circ}{\bx}_{|\ell|j}\
\f{\d\overset{\circ}{u}}{\d\vr}^\ell\
\f{\d\overset{\circ}{u}}{\d\vr}^j\bigg\}\vr\\
&
\qquad+\ha\bigg\{\overset{\circ}{\bx}_{|\ell}\f{\d^3
\overset{\circ}{u}}{\d\vr^3}^\ell +\overset{\circ}{\bx}_{|\ell|j|k}
\f{\d\overset{\circ}{u}}{\d\vr}^\ell\ \f{\d\overset{\circ}{u}}
{\d\vr}^j \ \f{\d\overset{\circ}{u}}{\d\vr}^k
+3\overset{\circ}{\bx}_{\ell|j} \f{\d^2\overset{\circ}{u}}
{\d\vr^2}^\ell\ \f{\d\overset{\circ}{u}}{\d\vr}^j\bigg\}\vr^2
+\cO(\vr^3) \\
&\quad=
\overset{\circ}{\bx}_{|\ell}\Theta^\ell + \bigg\{\overset{\circ}
{\bx}_{|\ell}\f{\d^2\overset{\circ}{u}}{\d\vr^2}^\ell
+\Big(\overset{\circ}{\Gam}_{\ell j}^m \overset{\circ}{\bx}_{|m}
+\overset{\circ}{\Gam}_{\ell j}\overset{\circ}{\bn}\Big)\Theta^\ell\Theta^j\bigg\}\vr\\
&\qquad+\ha\bigg\{\overset{\circ}{\bx}_{|\ell}\f{\d^3\overset{\circ}{u}}{\d\vr^3}^\ell
+\Big(\Big[\overset{\circ}{\Gam}_{\ell j|k}^m+\overset{\circ}
{\Gam}_{\ell j}^t\overset{\circ}{\Gam}_{tk}^m-\overset{\circ}{L}_{\ell
  j}\overset{\circ}{L}_k^m\Big]\overset{\circ}{\bx}_{|m}
+\Big[\overset{\circ}{\Gam}_{\ell j}^m \overset{\circ}{L}_{mk}
+\overset{\circ}{L}_{\ell j|k}\Big]\overset{\circ}{\bn}\Big)
\Theta^\ell\Theta^j\Theta^k\\
&
\qquad+3\Big(\overset{\circ}{\Gam}_{\ell j}^m\overset{\circ}{\bx}_{|m}
+\overset{\circ}{L}_{\ell j}\overset{\circ}{\bn}\Big)
 \Theta^j\f{\d^2\overset{\circ}{u}}{\d\vr^2}^\ell\bigg\}\vr^2+\cO(\vr^3).
\end{align*}

\bigskip
\noindent
\underline{Right hand side of \eqref{eq:b4}:}  (see \eqref{eq:B7})
\begin{align*}
&\f{1}{\vr}
(\bx-\overset{\circ}{\bx})
=
\overset{\circ}{\bx}_{|\ell}\Theta^\ell +\ha
\bigg\{\Big(\overset{\circ}{\Gam}_{k\ell}^m\overset{\circ}{\bx}_{|m}
+\overset{\circ}{L}_{k\ell}\overset{\circ}{\bn}\Big)
\Theta^k\Theta^\ell+ \overset{\circ}{\bx}_{|k}
\f{\d^2\overset{\circ}{u}}{\d\vr^2}^k\bigg\}\vr\\
&\quad+\f{1}{6}\bigg\{\overset{\circ}{\bx}_{|\ell}
\f{\d^3\overset{\circ}{u}}{\d\vr^3}^\ell+\Big(
\Big[\overset{\circ}{\Gam}_{\ell j|k}^m
+\overset{\circ}{\Gam}_{\ell j}^t\overset{\circ}{\Gam}_{tk}^m
-\overset{\circ}{L}_{\ell
  j}\overset{\circ}{L}_k^m\Big]\overset{\circ}{\bx}_{|m}
+\Big[\overset{\circ}{\Gam}_{\ell j}^m \overset{\circ}{L}_{mk}
+\overset{\circ}{L}_{\ell j|k}\Big]\overset{\circ}{\bn}\Big)
\Theta^\ell\Theta^j\Theta^k\\
&\quad+3\Big(\overset{\circ}{\Gam}_{\ell j}^m\overset{\circ}{\bx}_{|m}
+\overset{\circ}{L}_{\ell j}\overset{\circ}{\bn}\Big)\Theta^j
\f{\d^2\overset{\circ}{u}}{\d\vr^2}^\ell \bigg\}\vr^2+\cO(\vr^3), \\
&\bn(\bu)
=
\overset{\circ}{\bn}+\overset{\circ}{\bn}_{|j}
\f{\d\overset{\circ}{u}}{\d\vr}^j\vr+\ha
\bigg\{\overset{\circ}{\bn}_{|j|k}\f{\d\overset{\circ}{u}^j}{\d\vr}
\f{\d\overset{\circ}{u}}{\d\vr}^k+\overset{\circ}{\bn}_{|j}
\f{\d^2\overset{\circ}{u}}{\d\vr^2}^j\bigg\}\vr^2  +\cO(\vr^3)\\
&\quad=
\overset{\circ}{\bn}-\overset{\circ}{L}_j^m
\overset{\circ}{\bx}_{|m}\Theta^j\vr\\
&\qquad-\ha\bigg\{\Big(\overset{\circ}{L}_{j|k}^m\overset{\circ}{\bx}_{|m}
+ \overset{\circ}{L}_j^\ell\overset{\circ}{\Gam}_{\ell k}^m
\overset{\circ}{\bx}_{|m}+\overset{\circ}{L}_j^m
\overset{\circ}{L}_{mk}\overset{\circ}{\bn}\Big) \Theta^j\Theta^k
+\overset{\circ}{L}_j^m  \f{\d^2\overset{\circ}{u}}{\d\vr^2}^j
\overset{\circ}{\bx}_{|m}\bigg\}\vr^2+\cO(\vr^3),\\
&\f{1}{\vr}G
=
\ha\overset{\circ}{L}_{k\ell}\Theta^k\Theta^\ell \vr
-\ha \overset{\circ}{L}_{j\ell}\f{\d^2
\overset{\circ}{u}}{\d\vr^2}^j\Theta^\ell\\
&\quad
+\bigg\{\f{1}{6} \overset{\circ}{L}_{\ell j|k}-\f{1}{3}
\overset{\circ}{\Gam}_{\ell j}^m \overset{\circ}{L}_{mk}
-\ha \overset{\circ}{g}_{m\ell} \overset{\circ}{L}_{j|k}^m-\ha
\overset{\circ}{g}_{m\ell}\overset{\circ}{L}_j^t\overset{\circ}
{\Gam}_{tk}^m\bigg\}\Theta^j\Theta^k\Theta^\ell\vr^2+\cO(\vr^3),\\
&\Big(\f{1}{\vr}G\Big)\bn(\bu)
=\Big(\f{1}{\vr}G\Big)\Big(\overset{\circ}{\bn}-
\overset{\circ}{L}_j^m\Theta^j\overset{\circ}{\bx}_{|m}\vr\Big)+\cO(\vr^3)\\
&\quad=
\overset{\circ}{\bn}\ha\overset{\circ}{L}_{k\ell}\Theta^k\Theta^\ell\vr
+\overset{\circ}{\bn}\vr^2
-\ha\overset{\circ}{L}_j^m
\overset{\circ}{L}_{k\ell}\overset{\circ}{\bx}_{|m}
\Theta^j\Theta^k\Theta^\ell\vr^2
-\ha\overset{\circ}{L}_{j\ell}
\f{\d^2\overset{\circ}{u}}{\d\vr^2}^j\Theta^\ell\overset{\circ}{\bn}\vr^2
+\cO(\vr^3),\\
&\f{G^2}{A^2}
=
\Big(\f{1}{\vr}G\Big)^2= \f{1}{4}
\big(\overset{\circ}{L}_{k\ell}\Theta^k\Theta^\ell\big)^2\vr^2+\cO(\vr^3), \\
&\Big(1-\f{G^2}{A^2}\Big)^{-1}
=
1+\f{1}{4}\big(\overset{\circ}{L}_{k\ell}\Theta^k\Theta^\ell\big)^2\vr^2+\cO(\vr^3).
\end{align*}

\bigskip
\noindent
Comparing the coefficients of $\bx_{|\ell}\vr$ gives:
\[
\text{\underline{lhs:}}\quad
\f{\d^2\overset{\circ}{u}}{\d\vr^2}^\ell+\overset{\circ}{\Gam}_{jk}^\ell
\Theta^j\Theta^k
\quad\text{and}\quad
\text{\underline{rhs:}}\quad
\ha \overset{\circ}{\Gam}_{jk}^\ell \Theta^j\Theta^k
+\ha\f{\d^2\overset{\circ}{u}}{\d\vr^2}^\ell.
\]
Consequently, there holds
\[
\f{\d^2 \overset{\circ}{u}}{\d\vr^2}^\ell(\overset{\circ}{\bu})=
-\overset{\circ}{\Gam}_{jk}^\ell\Theta^j\Theta^k.
\]

\bigskip
\noindent
{\bf Next, compare the coefficients of $\overset{\circ}{\bx}_{|\ell}\vr^2$:}

\bigskip
\noindent
\underline{Left hand side of \eqref{eq:b4}:}
\begin{align*}
&\ha\ \f{\d^3\overset{\circ}{u}}{\d\vr^3}^\ell +\ha\Big(
  \overset{\circ}{\Gam}_{mj|k}^\ell+\overset{\circ}{\Gam}_{mj}^t
  \overset{\circ}{\Gam}_{tk}^\ell-\overset{\circ}{L}_{mj} \overset{\circ}{L}_k^\ell\Big)
\Theta^m\Theta^j\Theta^k
+\f{3}{2}\overset{\circ}{\Gam}_{mj}^\ell\Theta^j \f{\d^2\overset{\circ}{u}}{\d\vr^2}^m\\
&\quad=\ha\ \f{\d^3\overset{\circ}{u}}{\d\vr^3}^\ell+\ha\Big(\overset{\circ}
{\Gam}_{mj|k}^\ell+\overset{\circ}{\Gam}_{mj}^t\overset{\circ}
{\Gam}_{tk}^\ell-\overset{\circ}{L}_{mj} \overset{\circ}{L}_k^\ell\Big)
\Theta^m\Theta^j\Theta^k-\f{3}{2}\overset{\circ}{\Gam}_{tj}^\ell
\overset{\circ}{\Gam}_{km}^t\Theta^m\Theta^j\Theta^k.
\end{align*}

\bigskip
\noindent
\underline{Right hand side of \eqref{eq:b4}:}
\begin{align*}
&\Big(\f{1}{\vr} (\bx-\overset{\circ}{\bx})-\f{1}{\vr}G\bn\Big)
\Big(1+\f{1}{4}\vr^2(\overset{\circ}{L}_{k\ell}\Theta^k\Theta^\ell)^2
+\cO(\vr^3)\Big)\\
&\quad=
\Theta^\ell\os{\circ}{\bx}_{|\ell}-\vr\ha
\overset{\circ}{\Gam}_{km}^\ell \Theta^k\Theta^m
\overset{\circ}{\bx}_{|\ell}\\
&\qquad+
\bigg\{\f{1}{6}\ \f{\d^3\overset{\circ}{u}}{\d\vr^3}^\ell+\Big(\f{1}{6}
\overset{\circ}{\Gam}_{mj|k}^\ell-\f{1}{3}\overset{\circ}{\Gam}_{mj}^t
\overset{\circ}{\Gam}_{tk}^\ell
-\f{1}{3}\overset{\circ}{L}_{j}^\ell \overset{\circ}{L}_{km}\Big)
\Theta^m\Theta^j\Theta^k
+\f{1}{4}\big(\overset{\circ}{L}_{kj}\Theta^k\Theta^j\big)^2
\Theta^\ell\bigg\}\os{\circ}{\bx}_{|\ell}.
\end{align*}

\bigskip
\noindent
Hence, from equating left hand side and right side,
one obtains
\[
 \f{\d^3u}{\d\vr^3}^\ell(\overset{\circ}{\bu})=\Big\{-
\overset{\circ}{\Gam}_{mj|k}^\ell+2\overset{\circ}{\Gam}_{mj}^t
\overset{\circ}{\Gam}_{tk}^\ell+\ha \overset{\circ}{L}_{mj}\overset{\circ}{L}_k^\ell\Big\}
\Theta^m\Theta^j\Theta^k
+\f{3}{4}\big(\overset{\circ}{L}_{kj}\Theta^k\Theta^j\big)^2\Theta^\ell.
\]

With the expressions for $\f{\d^2\os{\circ}{u}^\ell}{\d s^2}$ and
$\f{\d^3\os{\circ}{u}^\ell}{\d s^3}$, we recollect the relations for
$\bx-\os{\circ}{\bx}$ and find instead of \eqref{eq:B6a}:
\begin{align*}
&(\bx-\bx_0)
=
\vr\be(\bTheta)+\ha\vr^2\os{\circ}{\bn}\os{\circ}{L}_{k\ell}
\Theta^k\Theta^\ell\\
&+
\f{\vr^3}{6}\bigg\{
\Big(-\ha
\os{\circ}{\bx}_{|\ell}\os{\circ}{L}_{mj}\os{\circ}{L}_k^\ell
+\Big[2\os{\circ}{L}_{mj|k}-3\os{\circ}{L}_{tj}\os{\circ}{\Gam}_{mk}^t\Big]
\os{\circ}{\bn}\Big)\Theta^m\Theta^j\Theta^k
+\f{4}{3}\big(\os{\circ}{L}_{kj}\Theta^k\Theta^j\big)^2\bTheta\bigg\}+\cO(\vr^4).
\end{align*}
Collecting the first three derivatives of $u^\ell$ at
$\overset{\circ}{\bu}$ implies that the first terms
of the transform $(\vr,\bTheta)\mapsto \bu$ read as
\begin{align}\label{eq:B8}
&u^\ell(\vr,\bTheta)
=
\overset{\circ}{u}^\ell+\Theta^\ell\vr-\ha\overset{\circ}{\Gam}_{jk}^\ell
\Theta^j\Theta^k\vr^2\\
&\qquad
+\f{1}{6}\bigg\{\ha \overset{\circ}{L}_j^\ell \overset{\circ}{L}_{km}
+2\overset{\circ}{\Gam}_{tj}^\ell\overset{\circ}{\Gam}_{km}^t
-\overset{\circ}{\Gam}_{mj|k}^\ell\bigg\} \Theta^j\Theta^k\Theta^m\vr^3
+\f{1}{8}\big(\overset{\circ}{L}_{jk}\Theta^j\Theta^k\big)^2\Theta^\ell\vr^3
+\cO(\vr^4),\no
\end{align}
for all $\ell=1,\dots,n-1$. Whereas, the inverse mapping
$\bu\mapsto(\vr,\bTheta)$ can be obtained  from
\[
\vr=A(\bu)=|\bx(\bu)-\overset{\circ}{\bx}|
\]
and the nonlinear equations for
\begin{align*}
\Theta^\ell
&=
\f{1}{\vr}\big(u^\ell-\overset{\circ}{u}^\ell\big)+\ha\vr
\overset{\circ}{\Gam}_{jk}^\ell\Theta^j\Theta^k\\
&\
-\f{1}{6}\vr^2\bigg\{\ha\overset{\circ}{L}_j^\ell \overset{\circ}{L}_
{km}+2\overset{\circ}{\Gam}_{tj}^\ell\overset{\circ}
{\Gam}_{km}^t-\overset{\circ}{\Gam}_{mj|k}^\ell\bigg\}
\Theta^j\Theta^k\Theta^m
-\f{1}{8}\vr^2\big(\overset{\circ}{L}_{jk}\Theta^j\Theta^k\big)^2
\Theta^\ell+\dots
\end{align*}
for all $\ell=1,\dots,n-1$ can, for $\vr>0$ sufficiently
small, be solved via successive iteration.
\end{proof}

\begin{theorem}\label{t:3.3}
Let $\Gam\in C^4$. Then, the surface measure $\d s_\Gam$
of $\Gam$ in Martensen's surface polar coordinates satisfies
\begin{equation}\label{eq:3++}
\d s_\Gam=\vr^{n-2}\d\vr\wedge\d\bom +\big(\vr^n a(\bTheta)+\cO(\vr^{n+1})\big)
\d\vr\wedge\d\bom
\end{equation}
where
\[
a(\bTheta)=\sumli_{j=1}^{n-1} \Theta^j \bigg\{\f{3}{4}\Big(
\big(\os{\circ}{L}_{kt}\Theta^k\Theta^t\big)^2+\os{\circ}{L}_t^j
\os{\circ}{L}_{mk}\Theta^t\Theta^m\Theta^k\Big)
-\Theta^j \os{\circ}{L}_m^\ell\os{\circ}{L}_{\ell
  k}\Theta^m\Theta^k\bigg\}
\]
and
\[
\d\bom=\sumli_{j=1}^{n-1}(-1)^{j+1}\Theta^j[\d\Theta^1\wedge\dots\wedge
\kreuz\wedge\dots\wedge\d\Theta^{n-1}],\
\sumli_{j=1}^{n-1}\Theta^j\Theta^j=1.
\]
Here, $\d\bom$ is the surface measure of the unit sphere $\bbS^{n-2}$ in
$\R^{n-1}$.
\end{theorem}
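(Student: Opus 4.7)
The plan is to pull back the Euclidean surface measure of $\Gam$ to the $(\varrho,\bom)$-chart via the map $\mathbf{z}(\varrho,\bom):=\bx\big(\bu(\varrho,\bTheta(\bom))\big)$, where $\bTheta(\bom)$ is any smooth parametrization of $\bbS^{n-2}\sbs\R^{n-1}$ by coordinates $\omega_1,\dots,\omega_{n-2}$. The induced metric will be diagonalized by exploiting the defining property of Martensen's coordinates: the radial rays are orthogonal to the level sets $\cC_\varrho$. Indeed, $\mathbf{z}_\varrho=\Gr A/|\Gr A|^2$ points along $\Gr A$ while $\mathbf{z}_{\omega_i}$ is tangent to $\{A=\varrho\}$, so $\mathbf{z}_\varrho\cdot\mathbf{z}_{\omega_i}\equiv0$ and
\[
\d s_\Gam \;=\; |\mathbf{z}_\varrho|\,\sqrt{\det\big(\mathbf{z}_{\omega_i}\cdot\mathbf{z}_{\omega_j}\big)}\;\d\varrho\wedge\d\omega_1\wedge\dots\wedge\d\omega_{n-2}.
\]

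\textbf{The two factors.} For the radial factor, the identity $|\mathbf{z}_\varrho|^2=1/|\Gr A|^2=(1-G^2/A^2)^{-1}$ together with the expansion $G^2/A^2=\f{1}{4}\big(\os{\circ}{L}_{k\ell}\Theta^k\Theta^\ell\big)^2\varrho^2+\cO(\varrho^3)$ already derived in the proof of Theorem~\ref{t:b1} gives $|\mathbf{z}_\varrho|=1+\tfrac{1}{8}\big(\os{\circ}{L}_{k\ell}\Theta^k\Theta^\ell\big)^2\varrho^2+\cO(\varrho^3)$. For the tangential factor, I will differentiate the expansion \eqref{eq:B7} of $\bx-\os{\circ}{\bx}$ with respect to $\omega_i$, which yields
\[
\mathbf{z}_{\omega_i} = \varrho\,\os{\circ}{\bx}_{|k}\,t^k_i + \varrho^2\big(\os{\circ}{L}_{k\ell}\Theta^\ell t^k_i\big)\os{\circ}{\bn} + \varrho^3\,\mathbf{v}_i + \cO(\varrho^4),
\]
with $t^k_i:=\partial\Theta^k/\partial\omega_i$ and $\mathbf{v}_i$ the $\omega_i$-derivative of the cubic coefficient in \eqref{eq:B7}. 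Using $\os{\circ}{\bx}_{|k}\cdot\os{\circ}{\bx}_{|\ell}=\delta_{k\ell}$ and $\os{\circ}{\bn}\cdot\os{\circ}{\bx}_{|k}=0$, the inner products take the form $\mathbf{z}_{\omega_i}\cdot\mathbf{z}_{\omega_j}=\varrho^2 M_{ij}+\varrho^4 N_{ij}+\cO(\varrho^5)$, with $M_{ij}=\mathbf{t}_i\cdot\mathbf{t}_j$ the induced metric on $\bbS^{n-2}$ and $N_{ij}$ collecting the product $\big(\os{\circ}{L}_{k\ell}\Theta^\ell t^k_i\big)\big(\os{\circ}{L}_{pq}\Theta^q t^p_j\big)$ together with the cross terms $\os{\circ}{\bx}_{|k}t^k_i\cdot\mathbf{v}_j+\mathbf{v}_i\cdot\os{\circ}{\bx}_{|k}t^k_j$. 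The standard expansion of the determinant gives
\[
\sqrt{\det\big(\mathbf{z}_{\omega_i}\cdot\mathbf{z}_{\omega_j}\big)} = \varrho^{n-2}\sqrt{\det M_{ij}}\,\Bigl(1+\tfrac{\varrho^2}{2}\,\mathrm{tr}\big(M^{-1}N\big)+\cO(\varrho^3)\Bigr),
\]
and $\sqrt{\det M_{ij}}\,\d\omega_1\wedge\dots\wedge\d\omega_{n-2}=\d\bom$ is exactly the canonical surface measure on $\bbS^{n-2}$ written in the intrinsic form stated in the theorem.

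\textbf{Main obstacle: matching the coefficient $a(\bTheta)$.} Multiplying the two factors produces $a(\bTheta)=\tfrac{1}{8}\big(\os{\circ}{L}_{k\ell}\Theta^k\Theta^\ell\big)^2+\tfrac{1}{2}\,\mathrm{tr}\big(M^{-1}N\big)$, so that the technical heart of the proof becomes the algebraic reduction of this expression into the explicit form stated in the theorem, which involves only $\bTheta$ and $\os{\circ}{L}$. The crucial ingredient is the spherical completeness identity
\[
\sum_{i,j}(M^{-1})^{ij}t^k_i t^\ell_j = \delta^{k\ell}-\Theta^k\Theta^\ell,
\]
which decomposes the Euclidean metric of $\R^{n-1}$ into its radial and tangential parts at a point of $\bbS^{n-2}$. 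Applied to the normal contribution of $N_{ij}$, it produces $\os{\circ}{L}^\ell_m\os{\circ}{L}_{\ell k}\Theta^m\Theta^k-\big(\os{\circ}{L}_{k\ell}\Theta^k\Theta^\ell\big)^2$, which already accounts for parts of the stated formula for $a(\bTheta)$; analogous manipulations will convert the cross terms involving $\mathbf{v}_i$, using the explicit tangential cubic coefficient from \eqref{eq:B8}, into the remaining $\tfrac{3}{4}$-terms. Finally, the $\cO(\varrho^{n+1})$ remainder is controlled by the $\varrho^3$-terms left out in both $|\mathbf{z}_\varrho|$ and $\sqrt{\det(\mathbf{z}_{\omega_i}\cdot\mathbf{z}_{\omega_j})}$; these are uniformly bounded in $\bTheta$ thanks to the $C^4$ regularity of $\Gam$, which controls the derivatives of $\bx(\bu)$, $\bn(\bu)$, and $\os{\circ}{L}$ up to the required order.
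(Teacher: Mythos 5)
Your route is genuinely different from the paper's. The paper computes the pullback of $\d s_\Gam$ directly as a Pfaffian form: it contracts the exterior products $[\d x^1\wedge\dots\wedge\d x^n]$ (with one factor omitted) with the unit normal, expands $\bn(\bx)$ and each $\d x^j$ in powers of $\vr$, and sorts the wedge products into the two contributions $\d s_\Gam^1$ and $\d s_\Gam^2$. You instead exploit the defining orthogonality of Martensen's coordinates (radial rays perpendicular to the level sets $\cC_\vr$) to diagonalize the first fundamental form, so that $\d s_\Gam=|\mathbf{z}_\vr|\,\sqrt{\det(\mathbf{z}_{\omega_i}\cdot\mathbf{z}_{\omega_j})}\,\d\vr\wedge\d\omega_1\wedge\dots\wedge\d\omega_{n-2}$. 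The structural steps are sound: $|\mathbf{z}_\vr|=|\Gr A|^{-1}=(1-G^2/A^2)^{-1/2}$ with the $\vr^2$-coefficient already available from the proof of Theorem~\ref{t:b1}; the absence of a $\vr^{n-1}$-term because the $\vr\cdot\vr^2$ cross terms in the angular Gram matrix pair a tangential with a normal vector; and the projection identity $\sum_{i,j}(M^{-1})^{ij}t_i^kt_j^\ell=\delta^{k\ell}-\Theta^k\Theta^\ell$ as the tool that converts the trace into an expression in $\bTheta$ and $\os{\circ}{L}$ alone. This is arguably cleaner than the paper's wedge-product bookkeeping, and it is internally consistent (for the unit sphere in $\R^3$ your two factors are $(1-\vr^2/4)^{-1/2}$ and $\vr(1-\vr^2/4)^{1/2}$, whose product is exactly $\vr$).

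The genuine gap is that the actual content of the theorem, the explicit coefficient $a(\bTheta)$, is never computed. You evaluate only the normal--normal part of $N_{ij}$ and assert that ``analogous manipulations'' of the cross terms $\os{\circ}{\bx}_{|k}t_i^k\cdot\mathbf{v}_j$ yield the remaining terms; but those cross terms carry all of the third-order geometry of $\Gamma$, and they cannot be read off from \eqref{eq:B7} as cited: \eqref{eq:B7} still contains the unknown derivatives $\d^2\os{\circ}{u}^k/\d s^2$ and $\d^3\os{\circ}{u}^k/\d s^3$, so one must first substitute $\d^2\os{\circ}{u}^\ell/\d\vr^2=-\os{\circ}{\Gam}_{jk}^\ell\Theta^j\Theta^k$ and the formula for $\d^3\os{\circ}{u}^\ell/\d\vr^3$ from the proof of Theorem~\ref{t:b1} (equivalently, use the recollected expansion of $\bx-\os{\circ}{\bx}$ given there, whose tangential $\vr^3$-part is exactly what $\mathbf{v}_i$ requires), and only then differentiate in $\omega_i$. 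Until that algebra is carried out and the resulting sum $\f18\big(\os{\circ}{L}_{k\ell}\Theta^k\Theta^\ell\big)^2+\ha\operatorname{tr}(M^{-1}N)$ is reduced and compared with the stated $a(\bTheta)$, the theorem is not proved; note that this comparison needs care, since the statement of the theorem and the final display of the paper's proof already differ in the placement of the factor $\f34$, so ``matching the $\f34$-terms'' cannot be taken on faith. In short: a legitimate and genuinely different framework, but the decisive computation --- which occupies most of the paper's proof --- is missing.
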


\begin{proof}
For the surface measure of $\Gam$, we use the exterior Pfaffian
products, defining the exterior normal vector's components times
$\d s_\Gam$ (see \cite[Chapter 11.4]{We-St}. Then, we multiply
scalarly with the exterior unit normal $\bn(\bx)= \big(n^1(\bx),
\dots,n^n(\bx)\big)$ which yields
\begin{equation}\label{eg:surface measure}
\d s_\Gam=\sumli_{j=1}^n (-1)^{j+1}\big[\d x^1\wedge\dots\wedge
  \kreuzx\wedge\dots\wedge\d x^n\big]n^j(\bx).
\end{equation}

Expansions about $\os{\circ}{\bx}$ up to the order $\vr^2$ give
\begin{align*}
\bn(\bx)=\os{\circ}{\bn}-\vr
\os{\circ}{L}_j^m\os{\circ}{\bx}_{|m}\Theta^j
&-\ha \vr^2 \bigg\{\os{\circ}{L}_{j|k}^m+\os{\circ}{L}_j^\ell
\overset{\circ}{\Gam}_{\ell k}^m-\overset{\circ}{\Gam}_{jk}^t
\os{\circ}{L}_t^m\bigg\}\Theta^j\Theta^k\os{\circ}{\bx}_{|m}\\
&\qquad-\ha\vr^2\os{\circ}{L}_j^\ell\os{\circ}{L}_{\ell k}\Theta^j\Theta^k
\os{\circ}{\bn}+\cO(\vr^3)
\end{align*}
and
\begin{align*}
\d x^j
&=
\d\vr\Theta^j +\vr\d\Theta^j +\ha \vr^2\d\vr
\bigg\{\f{3}{4} \big(\os{\circ}{L}_{kt}\Theta^k\Theta^t\big)^2
-\ha \os{\circ}{L}_k^j
\os{\circ}{L}_{mt}\Theta^k\Theta^m\Theta^t\bigg\}
+\cO(\vr^3)\\
&=
\d\vr\Theta^j +\vr\Theta^j +\vr^2\d\vr c^j(\bTheta)+\cO(\vr^3)
\end{align*}
with
\[
c^j(\bTheta)=\ha\bigg\{\f{3}{4}\big(\os{\circ}{L}_{kt}\Theta^k\Theta^t\big)^2
-\ha \os{\circ}{L}_k^j
\os{\circ}{L}_{mt}\Theta^k\Theta^m\Theta^t\bigg\}
\]
for $j=1,\dots,n-1$ and
\[
\d x^n=\os{\circ}{L}_{jk}\Theta^j\Theta^k\vr\d\vr+\ha\Big(
2\os{\circ}{L}_{mj|k}-3\os{\circ}{L}_{\ell
  j}\os{\circ}{\Gam}_{mk}^\ell\Big) \Theta^m\Theta^j\Theta^k\vr^2\d\vr+\cO(\vr^3).
\]
Inserting this into \eqref{eg:surface measure}, yields
\begin{align}\label{eq:x}
\d s_\Gam
&=
(-1)^{n+1}\big[\d x^1\wedge\dots\wedge\d x^{n-1}\big]
\Big(1-\ha\vr^2\os{\circ}{L}_j^m\os{\circ}{L}_{mk}\Theta^j\Theta^k
+\cO(\vr^3)\Big)\no\\
&\quad
+\sumli_{j=1}^{n-1}\Big[\big[\d x^1\wedge\dots\wedge\kreuzx\wedge\d x^{n-1}\big]
\wedge\d x^n\Big]\no\\
&\qquad
\x\bigg\{-\vr\os{\circ}{L}_k^j\Theta^k
-\ha\Big(\os{\circ}{L}_{m|k}^j+\os{\circ}{L}_m^\ell
\os{\circ}{\Gam}_{\ell k}^j-\os{\circ}{\Gam}_{mk}^t
\os{\circ}{L}_t^j\Big)\Theta^m\Theta^k\vr^2+\cO(\vr^3)\bigg\}.
\end{align}

For the first term in \eqref{eq:x}, we obtain
(modulo $\cO(\vr^3)$ terms) with the relations for
$\d x^j$:
\begin{align*}
&\big[\d x^1\wedge\dots\wedge\d x^{n-1}\big]
=
\big[\d\vr\Theta^1+\vr\d\Theta^1+\vr^2\d\vr c^1(\bTheta)
\wedge
\d\vr \Theta^2+\vr\d\Theta^2+\vr^2\d\vr c^2(\bTheta)\\
&\wedge\dots\wedge
\d\vr\Theta^{n-1}+\vr\d\Theta^{n-1}+\vr^2\d\vr c^{n-1}(\bTheta)\big]
=
\vr^{n-1}\big[\d\Theta^1\wedge\dots\wedge\d\Theta^{n-1}\big]\\
&+\vr^{n-2}\bigg[\d\vr\wedge\sumli_{j=1}^{n-1}(-1)^j
\big\{\Theta^j+\vr^2 c^j(\bTheta)\big\}
\big[\d\Theta^1\wedge\dots\wedge\kreuz\wedge\dots\wedge\d\Theta^{n-1}\big]
\bigg].
\end{align*}
Since the variables $\Theta^j$ vary on the $(n-2)$-dimensional
sphere $\bbS^{n-2}$, where
\[
\sumli_{j=1}^{n-1} (\Theta^j)^2=1,
\]
we have
\begin{equation}\label{eq:B11}
\sumli_{j=1}^{n-1}\Theta^j\d\Theta^j=0
\end{equation}
on $\bbS^{n-2}$. Hence, the differentials $\d\Theta^j$ with
$j=1,\dots,n-1$ are linearly dependent and
\[
\big[\d\Theta^1\wedge\dots\wedge\d\Theta^{n-1}\big]=0.
\]
Moreover, on $\bbS^{n-2}$, we have that the exterior
unit normal vector  $\bnu$ to  $\bbS^{n-2}$ satisfies
\[
\bnu_{\bom} = \sumli_{j=1}^{n-1} \Theta^j\be_j.
\]
Therefore, it holds
\[
\nu_j\d\bom=(-1)^{j+1}\big[\d\Theta^1\wedge\dots\wedge\kreuz\wedge
\dots\wedge\d\Theta^{n-1}\big]
=\Theta^j\d\bom,
\]
and with \eqref{eq:B11} one obtains
\[
\d\bom=\sumli_{j=1}^{n-1}(-1)^{j+1}\Theta^j
\big[\d\Theta^1\wedge\dots\wedge\kreuz\wedge\dots\wedge\d\Theta^{n-1}\big].
\]
For the first term in \eqref{eq:x}, we find therefore
\[
\d s_\Gam^1
=
(-1)^{n+1}\bigg\{
\vr^{n-2}+\vr^n\Big(\sumli_{j=1}^{n-1} c^j(\bTheta)\Theta^j\\
-\ha \os{\circ}{L}_m^\ell \os{\circ}{L}_{\ell k}\Theta^m\Theta^k \Big)
+\cO(\vr^{n+1})\bigg\}\d\vr\wedge\d\bom.
\]

For the remaining terms in \eqref{eq:x}, we have
(modulo $\cO(\vr^3)$ terms) that
\begin{align*}
\d s_\Gam^2
&=
\Big[\big[\d x^1\wedge\dots\wedge\kreuzx\wedge\dots\wedge\d x^{n-1}\big]\wedge
    \d x^n\Big]\Big(-\vr \os{\circ}{L}_t^j\Theta^t
    -\ha\vr^2\{\dots\}\Theta^m\Theta^k\Big)\\
&=\Big[\big[\d\vr\Theta^1+\vr\d\Theta^1+\vr^2\d\vr c^1(\bTheta)\wedge
\d\vr\Theta^2+\vr\d\Theta^2+\vr^2\d\vr c^2(\bTheta)\\
&\qquad\qquad\wedge\dots\wedge\kreuzx\wedge\dots\wedge\d\vr^2\Theta^{n-1}+
\vr\d\Theta^{n-1}+\vr^2\d\vr c^{n-1}(\bTheta)\big]\\
&\qquad\qquad\wedge\Big(\vr\d\vr\os{\circ}{L}_{jk}\Theta^j\Theta^k
+\ha \vr^2\d\vr \{\dots\}\Theta^m\Theta^j\Theta^k\Big)\Big]
\Big(-\vr \os{\circ}{L}_t^k \Theta^t-\ha\{\dots\}\Theta^m\Theta^k\Big)\\
&=
\sumli_{j=1}^{n-1}(-1)^{n+j}\big(\vr^n+\cO(\vr^{n+1})\big)\d\vr\wedge
\big[\d\Theta^1\wedge\dots\wedge\kreuz\wedge\dots\wedge\d\Theta^{n-1}\big]
\os{\circ}{L}_{mk}\Theta^m\Theta^k\os{\circ}{L}_t^j\Theta^t\\
&=
(-1)^{n+1}\sumli_{j=1}^{n-1}\os{\circ}{L}_{mk}\Theta^m\Theta^k
\os{\circ}{L}_t^j\Theta^t\Theta^j\big(\vr^n+\cO(\vr^{n+1})\big)
\d\vr\wedge\d\bom.
\end{align*}
Consequently, we finally get in \eqref{eq:x}
\[
\d s_\Gam=\d s_\Gam^1+\d s_\Gam^2=
\vr^{n-2}\d\vr\wedge\d\bom +\big(\vr^n a(\bTheta)+\cO(\vr^{n+1})\big)\d\vr\wedge
\d\bom
\]
with
\[
a(\bTheta)=\sumli_{j=1}^{n-1}\Theta^j \bigg\{\f{3}{4}
\big(\os{\circ}{L}_{kt}\Theta^k\Theta^t\big)^2+\os{\circ}{L}_t^j
\os{\circ}{L}_{mk}\Theta^t\Theta^m\Theta^k
-\Theta^j\os{\circ}{L}_m^\ell\os{\circ}{L}_{\ell k}\Theta^m\Theta^k
\bigg\}.
\]
This is the proposed relation \eqref{eq:3++} for the surface measure.
\end{proof}


\end{document}